
\documentclass[12pt,reqno]{amsproc}

\usepackage{latexsym} 
  \usepackage[all]{xy}
  \usepackage{amsfonts} 
  \usepackage{amsthm} 
  \usepackage{amsmath} 
  \usepackage{amssymb}
  \usepackage{pifont}  
  \usepackage{enumerate}
%
%
 \newtheorem{thm}{Theorem}[section]
 \newtheorem{cor}[thm]{Corollary}
 \newtheorem{lem}[thm]{Lemma}
 \newtheorem{prop}[thm]{Proposition}
 \theoremstyle{definition}
 \newtheorem{defn}[thm]{Definition}
 \theoremstyle{remark}
 \newtheorem{rem}[thm]{Remark}
 \newtheorem{ex}[thm]{Example}
 \numberwithin{equation}{section}
 
 \def\bs{\begin{statement}}
\def\es{\end{statement}}
  \swapnumbers
  \newtheorem{statement}[thm]{} 
  
   \def\sw#1{{\sb{(#1)}}} 
    \def\eps{\varepsilon}

  \newcounter{zlist} 
  \newenvironment{zlist}{\begin{list}{(\arabic{zlist})}{ 
  \usecounter{zlist}\leftmargin2.5em\labelwidth2em\labelsep0.5em 
  \topsep0.6ex
  \parsep0.3ex plus0.2ex minus0.1ex}}{\end{list}}

  \newcounter{blist}

  \newcounter{rlist} 
  \newenvironment{rlist}{\begin{list}{(\roman{rlist})}{ 
  \usecounter{rlist}\leftmargin2.5em\labelwidth2em\labelsep0.5em 
  \topsep0.6ex 
  \parsep0.3ex plus0.2ex minus0.1ex}}{\end{list}}








\def\ot{\otimes}

\def\FF{{\mathbb F}}

\def\KK{{\mathbb K}}

\def\NN{{\mathbb N}}

\newcommand{\Bb}{\mathcal{B}}
\newcommand{\Cc}{\mathcal{C}}

\newcommand{\Tt}{\mathcal{T}}

\def\*C{{}^*\hspace*{-1pt}{\Cc}}

\def\text#1{{\rm {\rm #1}}}




 \def\1{\mathbf{1}}

  \def\la{\triangleright}
 \def\bla{\!\blacktriangleright\!}
   
\def\id{\mathrm{id}}
\def\pi {\mathrm{pi}}
\def\endeg{\flushright $\Diamond$}

\textheight 22.5 cm
\textwidth 15.5cm
\topmargin -.25in \headheight 0.3in \headsep .5cm
\oddsidemargin .15in \evensidemargin .15in
\topskip 12pt

\begin{document}

\title{Trusses: between braces and rings}

\author{Tomasz Brzezi\'nski}

\address{
Department of Mathematics, Swansea University, 
Swansea University Bay Campus,
Fabian Way,
Swansea,
SA1 8EN, U.K.\ \newline \indent
Department of Mathematics, University of Bia{\l}ystok, K.\ Cio{\l}kowskiego  1M,
15-245 Bia\-{\l}ys\-tok, Poland}

\email{T.Brzezinski@swansea.ac.uk}

\subjclass[2010]{16Y99, 16T05}

\keywords{Truss; skew brace; Hopf brace; distributive law}

\begin{abstract}
In an attempt to understand the origins and the nature of the law binding  two group operations together into a {\em skew brace}, introduced in [L.\ Guarnieri \& L.\ Vendramin,  Math.\ Comp.\ \textbf{86} (2017), 2519--2534] as a non-Abelian version of the {\em brace distributive law} of [W.\ Rump,  
J.\ Algebra {\bf 307} (2007), 153--170] and [F.\ Ced\'o, E.\ Jespers \& J.\ Okni\'nski,  Commun.\ Math.\ Phys.\ {\bf 327} (2014), 101--116], the notion of a {\em skew truss} is proposed. A skew truss consists of a set with a group operation and a semigroup operation connected by a modified distributive law that interpolates between that of a ring and a brace. It is shown that  a particular action and a cocycle characteristic of skew braces are already present  in a skew truss; in fact the interpolating function is a 1-cocycle, the bijecitivity of which indicates the existence of an operation that turns a truss into a brace. Furthermore, if the group structure in a two-sided truss is Abelian, then there is an associated ring -- another feature characteristic of a two-sided brace. To characterise a morphism of trusses, a {\em pith} is defined as a particular subset of the domain consisting of subsets termed {\em chambers}, which contains the kernel of the morphism as a group homomorphism. In the case of both rings and braces piths coincide with kernels. In general the pith of a morphism is a sub-semigroup of the domain and, if additional properties are satisfied, a pith is an $\NN_+$-graded semigroup. Finally, giving heed to [I.\ Angiono, C.\ Galindo \& L.\ Vendramin, Proc.\ Amer.\ Math.\ Soc.\ {\bf 145} (2017), 1981--1995] we linearise trusses and thus define {\em Hopf trusses} and study their properties, from which, in parallel to the set-theoretic case, some properties of Hopf braces are shown to follow.
\end{abstract}

\maketitle

\section{Introduction}
The quantum Yang-Baxter equation has its origins in statistical mechanics and quantum field theory \cite{Yan:som}, \cite{Bax:par}, \cite{Bax:exa}, but it came to prominence with the development of Hamiltonian methods in the theory of integrable systems \cite{FadTak:Ham}, \cite{Jim:qua}, leading to the introduction of quantum groups \cite{Dri:qua}. Since the late 1980s the quantum Yang-Baxter or braid equation has been the subject of numerous studies both in mathematical physics and pure mathematics. The study of a set-theoretic variant of the Yang-Baxter equation was proposed by Drinfeld in \cite{Dri:uns}. Though attracting a rather limited attention at the beginning, notwithstanding the ground-breaking contributions such as in \cite{GatVan:sem}, \cite{EtiSch:set}, \cite{LuYan:set}, the set theoretic Yang-Baxter equation has become a subject of intensive study more recently after the discovery of richness of the algebraic structure of sets that admit involutive, non-degenerate solutions to the Yang-Baxter equation, and its impact on group theory. We comment on this richness of structure presently and in more detail, as it is not the history or the development, neither the applications nor solutions of the Yang-Baxter equation that form the subject matter of the paper the reader is hereby presented with. This text is motivated by the emergence of algebraic systems known as {\em braces} \cite{Rum:dec}, \cite{Rum:bra}, \cite{CedJes:bra} and {\em skew braces} \cite{GuaVen:ske}, which are tantamount to the existence of non-degenerate solutions of the set-theoretic Yang-Baxter equation. A (skew) brace is a set with two binary operations related to each other by a specially adapted distributive law. More specifically a set $A$ together with two binary operations $\diamond$ and $\circ$ each one making $A$ into a group is called a {\em skew left brace} if, for all $a,b,c\in A$,
\begin{equation}\label{brace.law}
a\circ (b\diamond c) = (a\circ b) \diamond a^\diamond\diamond(a\circ c),
\end{equation}
where $a^\diamond$ is the inverse of $a$ with respect to $\diamond$. We refer to the compatibility condition \eqref{brace.law} as to the {\em brace distributive law}. A skew left brace is called simply a {\em left brace}, provided $(A,\diamond)$ is an Abelian group. 

Since their introduction and through their connection with the Yang-Baxter equation and, more fundamentally, group theory, braces attracted a lot of attention and the literature devoted to them and their generalisations is growing rapidly; see e.g.\ \cite{CatCol:sem}--\cite{Gat:set} to mention but a few. The questions the author of these notes would like to address are these: what is specific about the combination of operations that make the set into a brace and what properties of the braces are a (necessary) result of a more general compatibility relation?

In order to answer the said questions we propose to study a set with two binary operations connected by a rule which can be seen as the interpolation between the ring-type (i.e.\ the standard) and brace distributive laws. We call such a system a {\em skew truss}.\footnote{One might think about the proposed law as a way of holding different structures together,  hence the name. For the sake of simplicity of exposition we often drop the adjective `skew' for the rest of this introduction.} More exactly, a skew left truss is a set $A$ with binary operations $\diamond$ and $\circ$  and a function $\sigma: A\to A$, such that $(A,\diamond)$ is a group, $(A,\circ)$ is a semigroup\footnote{Note that the weakening of the structures that enter a truss goes in the different direction than is the case of a {\em semi-brace} \cite{CatCol:sem}, where $(A,\diamond)$ is a (left cancellative) semigroup, while $(A,\circ)$ is a group.} and, for all $a,b,c\in A$, the following generalised distributive law holds
\begin{equation}\label{truss.law}
a\circ (b\diamond c) = (a\circ b) \diamond \sigma(a)^\diamond\diamond(a\circ c).
\end{equation}
 This {\em truss distributive law} interpolates the ring (standard) and brace distributive laws: the former one is obtained by setting $\sigma(a) =1_\diamond$ (the neutral element of $(A,\diamond)$), the latter is obtained by setting $\sigma$ to be the identity map. 
 
 Although \eqref{truss.law} might seem as an {\it ad hoc} modification of both ring and brace distributive laws it can be equivalently written in a number of ways: 
$$
a\circ(b\diamond c) = (a\circ b)\diamond \lambda_a(c),\quad \!
a\circ(b\diamond c) = \mu_a(b)\diamond (a\circ c) , \quad\! \mbox{or} \quad\!
a\circ (b\diamond c) = \kappa_a(b)\diamond \hat\kappa_a(c),
$$
where, for each $a\in A$,  $\lambda_a$, $\mu_a$, $\kappa_a$ and $\hat\kappa_a$ are (unique) endomaps on $A$ (see Theorem~\ref{thm.equiv}). In other less formal words, if one considers a general sensible modification of the distributive law between operations $\diamond$ and $\circ$ on $A$ (including one involving two seemingly unrelated new binary operations given by $\kappa$ and $\hat\kappa$) one is forced to end up with the truss distributive law \eqref{truss.law}.

In fact the truss distributive law describes the distributivity of $\circ$ over the ternary {\em heap} or {\em torsor} or {\em herd}  operation $[a,b,c] = a\diamond b^\diamond\diamond c$  induced by $\diamond$;\footnote{I am grateful to Michael Kinyon for pointing this out to me.} see \cite[page 170]{Pru:the},  \cite[page 202, footnote]{Bae:ein} or \cite[Definition~2]{Cer:ter} for the definition of a herd or heap. Then the truss distributive law \eqref{truss.law} is equivalent to the distributive law
$$
a\circ [b,c,d] = [a\circ b, a\circ c, a\circ d], \qquad \mbox{for all $a,b,c,d\in A$};
$$
see Theorem~\ref{thm.equiv}. This not only gives additional justification for the truss distributive law, but also suggests immediate extension of the notion of a truss beyond groups to general heaps (or torsors or herds).\footnote{See, however, Lemma~\ref{lem.heap} which can somewhat dampen one's enthusiasm.}

There are properties of (skew) braces that are consequences of the truss distributive law. For example, \eqref{truss.law} implies that  the semigroup $(A,\circ)$ acts on the group $(A,\diamond)$ by group endomorphisms in two different ways (see Theorem~\ref{thm.brace}), thus yielding similar statements for braces \cite{Rum:bra}  and skew braces \cite[Proposition~1.9]{GuaVen:ske}. Furthermore, the map $\sigma$ in the  truss distributive law \eqref{truss.law} satisfies a 1-cocycle condition (see Theorem~\ref{thm.brace}), which provides a part of the linkage between bijective group 1-cocycles and braces \cite{Rum:bra} and skew braces \cite[Proposition~1.11]{GuaVen:ske}. Further still, if $(A,\diamond)$ is an Abelian group, and both left and, symmetrically defined, right truss distributive laws  with the same cocycle $\sigma$ are satisfied, then a new associative operation on $A$ can be defined, which combined with $\diamond$ make $A$ into a ring; see Theorem~\ref{thm.brace.ring}.  A part of the connection between braces and radical rings revealed in \cite{Rum:mod} follows from this. Finally, if the cocycle is bijective, then the semigroup operation can be ported  through it so that the original group and the new semigroup operations are connected by the brace distributive law. In particular, there is a brace associated to any truss with both operations making the set into a group, although this brace is usually not isomorphic to the truss it originates from; see Lemma~\ref{lem.brace}. This observation shows the limitation of the freedom in defining the truss structure, and most importantly indicates that if a set has two group operations, {\bf\em the only sensible distributivity one can expect is that of a brace}: the brace law is both rigid and unique.\footnote{This {\em rigidity property} has a flavour of the Eckmann-Hilton argument \cite{EckHil:gro} stating that if a set admits two unital operations commuting with each other, then these operations are necessarily equal to each other, and they are commutative and associative.} 

Yet there are properties of trusses that make them distinct from braces (and rings). If a group and monoid operations on the same set are connected by the brace distributive law, then necessarily the neutral elements for both these structures must coincide. This is no longer the case for trusses; it is the cocycle that transforms the monoid identity into the group identity (Corollary~\ref{cor.sigma.inverse}). There is no need for the neutral element of the group to act trivially on other elements of the set by the semigroup operation, as is the case for rings, either.

Once a single truss is constructed there is a possibility of constructing a (possibly infinite) family of trusses that includes those 
in which the cocycles are given by self-compositions of the original 1-cocycle; see Corollary~\ref{cor.family}. Members of this family generally are not mutually isomorphic. If both operations making a set into a truss are group operations, and hence the cocycle is bijective, each member of this (infinite, if the cocycle has infinite order) hierarchy can be transformed into a brace. Whether this observation is functional in constructing new solutions to the Yang-Baxter equation only time will tell. 

Although it is clear how to define a morphism of trusses, it is no longer so clear, how to define useful algebraic systems such as kernels that characterise morphisms to a certain extent. A morphism of trusses is in particular a morphism of the group structures, but the group-theoretic kernel of the truss morphism turns out to be too small and too much disconected from the semigroup structure to carry sufficient amount of useful information. This is in contrast to braces, in which the fact that neutral elements for both structures necessarily coincide, indicates in a very suggestive way, what the kernel of a brace morphism should be. Instead of a single set associated to a truss morphism $f$, we construct a family of such sets, each one controlling the behaviour of $f$ with the group identity and powers of the cocycle. The union of all these sets called {\em chambers} forms the {\em pith} of $f$.\footnote{We tend to imagine the pith of a morphism as, say, the pith of a walnut shoot, i.e.\ as central, consisting of chambers and containing the kernel; or as a pith of an orange, which can be discarded.} The pith of $f$ is a sub-semigroup of the domain, the labelling of chambers is compatible with the semigroup operation, the cocycle sends elements of each chamber to the next one; see Lemma~\ref{lem.kernel}. Furthermore, the zeroth chamber of the pith of $f$ euqals to the kernel of $f$ as a group morphism, and hence is a normal subgroup of the domain of $f$.

Motivated by Angiono,  Galindo and  Vendramin, who in \cite{AngGal:Hop} linearised skew braces into Hopf braces, we linearise skew trusses into Hopf trusses in Section~\ref{sec.Hopf}. Thus a Hopf truss consists of a coalgebra with two algebra structures such that one makes it into a Hopf algebra and the other into a bialgebra, connected by a linear version of the truss distributive law \eqref{distribute.Hopf}. Many similarities to or differences from as well as limitations of skew trusses described above carry over to the Hopf case. In particular, the linear version of the truss distributive law must involve Hopf actions and a cocycle (Theorem~\ref{thm.truss.Hopf}), there is a hierarchy of mutually non-isomorphic Hopf trusses (Proposition~\ref{prop.hierarchy.Hopf}), and if both algebras are Hopf algebras, then there is a Hopf brace associated to the Hopf truss (Lemma~\ref{lem.Hopf.brace}). From the categorical point of view the occurrence of these similarities and differences is not at all unexpected. Groups can be understood as Hopf algebras within a symmetric monoidal category of sets (with the Cartesian product as the monoidal structure), hence it is natural to expect similar behaviour in all situations in which this categorical nature of both Hopf algebras and groups takes a dominant role, as is the case of trusses.

\subsection*{Notation.} In any monoid, say, $(A,\diamond)$, the neutral element is denoted by $1_\diamond$. The inverse of an element $a\in A$ with respect to $\diamond$ and $1_\diamond$ is denoted by $a^\diamond$. For any sets $A,B$, $\mathrm{Map}(A,B)$ denotes the set of all functions from $A$ to $B$. For any $f\in \mathrm{Map}(A,B)$, the {\em push-forward} and {\em pullback} functions are defined and denoted as
$$
f_* : \mathrm{Map}(A,A) \to \mathrm{Map}(A,B), \qquad \varphi \mapsto f\circ \varphi,
$$
and
$$
f^* : \mathrm{Map}(B,B) \to \mathrm{Map}(A,B), \qquad \varphi \mapsto  \varphi \circ f,
$$
respectively. The space of all homomorphisms from an $\FF$-vector space $A$ to an $\FF$-vector space $B$ is denoted by $\mathrm{Hom}_ \FF (A, B)$, and unadorned tensor product is over $\FF$. The composition of functions is denoted by juxtposition.

  In Section~\ref{sec.Hopf} the standard Hopf algebra notation is used: the Sweedler notation for the value of a comultiplication $\Delta$ on elements, $\eps$ for a counit and $S$ for an antipode. 

\section{Skew trusses: definition and properties}\label{sec.truss}
In this section we define trusses and their morphisms, describe their equivalent characterisations, and study their properties.
\begin{defn}\label{def.truss}
Let  $(A,\diamond)$ be a group and let $\circ$ be an associative operation on $A$, for which there exists a function $\sigma:A\to A$ 
such that, for all $a,b,c\in A$,
\begin{equation}\label{semi.brace}
a\circ (b\diamond c) = (a\circ b) \diamond \sigma(a)^\diamond\diamond(a\circ c).
\end{equation}
A quadruple $(A,\diamond,\circ,\sigma)$,  is called a {\em skew left truss}, and the function $\sigma$ is called a {\em cocycle}.

In a symmetric way, a {\em skew right truss} is a quadruple $(A,\diamond,\circ,\sigma)$ with operations $\diamond$, $\circ$ making $A$ into a group and a semigroup respectively, and $\sigma: A\to A$ such that
\begin{equation}\label{distribute.r}
(a\diamond b)\circ c = (a\circ c)\diamond \sigma(c)^\diamond \diamond(b\circ c),
\end{equation}
for all $a,b,c\in A$.

The adjective {\em skew} is dropped if the group $(A,\diamond)$ is abelian. 

The identities \eqref{semi.brace}  and  \eqref{distribute.r} are refered to as {\em left} and {\em right truss dristributive laws}, respectively.
\end{defn}

Motivating examples of skew left trusses are a skew left ring  or a left near-ring (in which case $\sigma(a) = 1_\diamond$) and a skew left brace (in which case  $\sigma(a)  = a$,  and it is additionally assumed that $(A,\circ)$ is a group). 

\begin{ex}\label{ex.truss}
Let $(A,\diamond)$ be a group and let $\sigma\in \mathrm{Map}(A,A)$ be an idempotent function. Define a binary operation $\circ$ on $A$ by
\begin{equation}\label{idempotent}
a\circ b = \sigma(a), \qquad \mbox{for all $a,b\in A$}.
\end{equation}
Then $(A,\diamond,\circ)$ is a skew left truss with cocycle $\sigma$.
\end{ex}
\begin{proof}
Since $\sigma$ is an idempotent function, i.e.\ $\sigma^2 = \sigma$, $(A,\circ)$ is a semigroup. Furthermore,
$$
(a\circ b)\diamond \sigma(a)^\diamond \diamond (a\circ c) = \sigma(a)\diamond \sigma(a)^\diamond \diamond \sigma(a) = \sigma(a) = a\circ (b\diamond c),
$$
hence the operations are related by \eqref{semi.brace}, as required.
\end{proof}

Whatever is said about a skew left truss can be symmetrically said about a skew right truss. Therefore we can focus on the former, and we skip the adjective `left': the phrase `skew truss' means `skew left truss'. 

In a skew truss the cocyle is fully determined by the neutral element of the group and by the semigroup operation. 
\begin{lem}\label{lem.coc.prop}
Let $(A,\diamond,\circ,\sigma)$ be a skew truss. Then, for all $a\in A$,
\begin{equation}\label{sigma}
\sigma( a)= a\circ 1_\diamond.
\end{equation}
Consequently, the cocycle $\sigma$ is $(A,\circ)$-equivariant in the sense that, for all $a,b\in A$,
\begin{equation}\label{equivariant}
\sigma(a\circ b) = a\circ \sigma(b).
\end{equation}
\end{lem}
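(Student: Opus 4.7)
The plan is to read off the formula \eqref{sigma} by plugging in the identity element of $(A,\diamond)$ on both sides of the truss distributive law \eqref{semi.brace}, and then to deduce the equivariance \eqref{equivariant} as a one-line corollary using associativity of $\circ$.

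For \eqref{sigma}, I would set $b=c=1_\diamond$ in \eqref{semi.brace}. Since $1_\diamond\diamond 1_\diamond=1_\diamond$, the left-hand side collapses to $a\circ 1_\diamond$, while the right-hand side becomes
$$
(a\circ 1_\diamond)\diamond \sigma(a)^\diamond\diamond (a\circ 1_\diamond).
$$
Now multiply both sides on the left by $(a\circ 1_\diamond)^\diamond$ in the group $(A,\diamond)$; this cancels one copy of $a\circ 1_\diamond$ and leaves $1_\diamond=\sigma(a)^\diamond\diamond(a\circ 1_\diamond)$. Rearranging in the group gives $\sigma(a)=a\circ 1_\diamond$, which is \eqref{sigma}.

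For \eqref{equivariant}, apply \eqref{sigma} to the element $a\circ b$ to get $\sigma(a\circ b)=(a\circ b)\circ 1_\diamond$. Using associativity of $\circ$ this equals $a\circ (b\circ 1_\diamond)$, and a second application of \eqref{sigma} rewrites $b\circ 1_\diamond$ as $\sigma(b)$, giving $\sigma(a\circ b)=a\circ\sigma(b)$.

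There is no genuine obstacle here: the only subtlety is recognising that the truss law forces the right-hand side to contain the doubled factor $a\circ 1_\diamond$ once $b=c=1_\diamond$ is imposed, so that cancellation in the group pins down $\sigma(a)$ uniquely in terms of the semigroup action on $1_\diamond$. Everything else is formal.
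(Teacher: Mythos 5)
Your proof is correct and follows exactly the same route as the paper: substitute $b=c=1_\diamond$ into the truss distributive law, cancel in the group $(A,\diamond)$ to obtain $\sigma(a)=a\circ 1_\diamond$, and then deduce the equivariance from associativity of $\circ$. No issues.
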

\begin{proof}
Set $b=c=1_\diamond$ in the law \eqref{semi.brace} to obtain
$$
a\circ 1_\diamond = a\circ(1_\diamond\diamond1_\diamond) = (a\circ 1_\diamond )\diamond \sigma(a)^\diamond \diamond (a\circ 1_\diamond).
$$ 
The cancellation laws in $(A,\diamond)$ yield \eqref{sigma}. 

The second statement follows by the associativity of $\circ$ and \eqref{sigma}.
\end{proof}

 Directly from the truss distributive law \eqref{semi.brace} one derives the following properties:
 \begin{lem}\label{lem.diamond.inv}
Let $(A,\diamond,\circ,\sigma)$ be a skew truss. Then, for all $a,b,c\in A$,
\begin{subequations}\label{binv}
\begin{equation}\label{binv.c}
a\circ (b^\diamond\diamond c) = \sigma(a)\diamond (a\circ b)^\diamond\diamond (a\circ c), 
\end{equation}
\begin{equation}\label{b.cinv}
a\circ (b\diamond c^\diamond) =  (a\circ b) \diamond (a\circ c) ^\diamond\diamond\sigma(a) . 
\end{equation}
\end{subequations}
\end{lem}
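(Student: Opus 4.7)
The plan is to derive both identities directly from the truss distributive law \eqref{semi.brace} by feeding it well-chosen arguments and using cancellation in the group $(A,\diamond)$. Both identities are essentially `solving for the inverse', so the strategy is to combine $b$ with $b^\diamond$ (respectively $c$ with $c^\diamond$) inside a $\diamond$-expression so that one side of \eqref{semi.brace} collapses while the other exposes the term we want to isolate.

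For \eqref{binv.c}, I would apply \eqref{semi.brace} to $a\circ\bigl(b\diamond(b^\diamond\diamond c)\bigr)$. The input collapses to $a\circ c$, while the right-hand side of \eqref{semi.brace} reads $(a\circ b)\diamond\sigma(a)^\diamond\diamond\bigl(a\circ(b^\diamond\diamond c)\bigr)$. Equating the two and applying the group inverses in $(A,\diamond)$ from the left yields \eqref{binv.c} immediately.

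For \eqref{b.cinv}, the symmetric move is to expand $a\circ\bigl((b\diamond c^\diamond)\diamond c\bigr)$. Again the input simplifies, this time to $a\circ b$, while \eqref{semi.brace} gives $\bigl(a\circ(b\diamond c^\diamond)\bigr)\diamond\sigma(a)^\diamond\diamond(a\circ c)$. Cancelling on the right delivers \eqref{b.cinv}.

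There is essentially no obstacle: the whole proof is a two-line manipulation in the group $(A,\diamond)$ once the right auxiliary element is chosen. The only mild subtlety is that, unlike in a ring or a brace, $\sigma(a)$ does not automatically equal $1_\diamond$ or $a$, so one has to keep track of the $\sigma(a)^\diamond$ factor carefully when moving it to the opposite side; this is where the asymmetry in the placement of $\sigma(a)$ between \eqref{binv.c} and \eqref{b.cinv} comes from.
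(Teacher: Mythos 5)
Your proposal is correct and matches the paper's own argument exactly: the paper also expands $a\circ(b\diamond b^\diamond\diamond c)$ via the truss distributive law to get \eqref{binv.c} and handles \eqref{b.cinv} by the analogous symmetric manipulation. Nothing to add.
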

\begin{proof}
To prove \eqref{binv.c}, let us consider
$$
a\circ c = a\circ(b\diamond b^\diamond\diamond c) = (a\circ b)\diamond \sigma(a)^\diamond\diamond(a\circ  (b^\diamond\diamond c)),
$$
where the second equality follows by \eqref{semi.brace}. 
This implies \eqref{binv.c}. 
The equality \eqref{b.cinv} is proven in an analogous way.
\end{proof}

It turns out that in  \eqref{semi.brace} the notion of a skew truss captures general distributivity of a group and semigroup operations defined on the same set.

\begin{thm}\label{thm.equiv}
Let $A$ be a set with two binary operations $\diamond$ and $\circ$ making $A$ into a group and semigroup respectively. Then the following statements are equivalent
\begin{zlist}
\item There exists $\sigma: A\to A$ such that $(A,\diamond,\circ,\sigma)$ is a skew truss.
\item There exists 
$$
\lambda : A \to \mathrm{Map}(A,A), \qquad a\mapsto [\lambda_a: A\to A],
$$
such that, for all $a,b,c\in A$,
\begin{equation}\label{distribute}
a\circ (b\diamond c) = (a\circ b)\diamond \lambda_a(c).
\end{equation}
\item There exists 
$$
\mu : A \to \mathrm{Map}(A,A), \qquad a\mapsto [\mu_a: A\to A],
$$
such that, for all $a,b,c\in A$,
\begin{equation}\label{distribute.mu}
a\circ (b\diamond c) = \mu_a(b)\diamond (a\circ c).
\end{equation}
\item There exist
$
\kappa,\hat\kappa : A \to \mathrm{Map}(A,A),
$
$$
\kappa: a\mapsto [\kappa_a: A\to A], \qquad \hat\kappa: a\mapsto [\hat\kappa_a: A\to A],
$$
such that, for all $a,b,c\in A$,
\begin{equation}\label{distribute.kap}
a\circ (b\diamond c) = \kappa_a(b)\diamond \hat\kappa_a(c).
\end{equation}
\item Denote by $[-,-,-]$ the ternary {\em heap} or {\em torsor} or {\em herd} operation on $(A,\diamond)$, 
$$
[-,-,-]: A\times A\times A\to A, \qquad [a,b,c] := a\diamond b^\diamond \diamond c.
$$
Then $\circ$ left distributes over $[-,-,-]$, i.e.\  for all $a,b,c,d\in A$,
\begin{equation}\label{heap.distribute}
a\circ [b,c,d] = [a\circ b, a\circ c, a\circ d].
\end{equation}
\end{zlist}
\end{thm}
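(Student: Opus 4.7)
The plan is to establish the equivalence by showing that (1) directly implies each of (2)--(5) (this is essentially a rewriting of \eqref{semi.brace}), and then showing each converse by evaluating the hypothesised identity at the neutral element $1_\diamond$ in order to pin down the unknown auxiliary functions in terms of $\sigma(a) = a\circ 1_\diamond$.

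For (1) $\Rightarrow$ (2,3,4), I would simply set
\[
\lambda_a(c) = \sigma(a)^\diamond\diamond(a\circ c),\quad
\mu_a(b) = (a\circ b)\diamond\sigma(a)^\diamond,\quad
\kappa_a = \mu_a,\quad\hat\kappa_a = \lambda_a,
\]
which makes \eqref{distribute}, \eqref{distribute.mu} and \eqref{distribute.kap} immediate. For (1) $\Rightarrow$ (5), I would expand $a\circ[b,c,d]=a\circ(b\diamond c^\diamond\diamond d)$ by first applying \eqref{semi.brace} across the outermost $\diamond$, and then using \eqref{binv.c} from Lemma~\ref{lem.diamond.inv} on the remaining $a\circ(c^\diamond\diamond d)$; the two appearances of $\sigma(a)$ and $\sigma(a)^\diamond$ cancel and one obtains $[a\circ b,a\circ c,a\circ d]$.

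The converses are the substantive part but each is a one-line substitution. For (2) $\Rightarrow$ (1), setting $b=1_\diamond$ in \eqref{distribute} forces $\lambda_a(c)=(a\circ 1_\diamond)^\diamond\diamond(a\circ c)$; substituting this back recovers \eqref{semi.brace} with $\sigma(a)=a\circ 1_\diamond$. The implication (3) $\Rightarrow$ (1) is symmetric, using $c=1_\diamond$ to deduce $\mu_a(b)=(a\circ b)\diamond(a\circ 1_\diamond)^\diamond$. For (5) $\Rightarrow$ (1), specialising $c=1_\diamond$ in \eqref{heap.distribute} yields $a\circ(b\diamond d)=(a\circ b)\diamond(a\circ 1_\diamond)^\diamond\diamond(a\circ d)$, which is \eqref{semi.brace} with $\sigma(a)=a\circ 1_\diamond$.

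The only case that requires a bit of care is (4) $\Rightarrow$ (1), because two auxiliary functions appear. Here I would set $b=1_\diamond$ and $c=1_\diamond$ \emph{separately} in \eqref{distribute.kap} to obtain
\[
\hat\kappa_a(c) = \kappa_a(1_\diamond)^\diamond\diamond(a\circ c),\qquad
\kappa_a(b) = (a\circ b)\diamond\hat\kappa_a(1_\diamond)^\diamond,
\]
and then setting $b=c=1_\diamond$ gives $a\circ 1_\diamond = \kappa_a(1_\diamond)\diamond\hat\kappa_a(1_\diamond)$. Substituting the two expressions back into \eqref{distribute.kap} yields
\[
a\circ(b\diamond c) = (a\circ b)\diamond\bigl(\kappa_a(1_\diamond)\diamond\hat\kappa_a(1_\diamond)\bigr)^\diamond\diamond(a\circ c),
\]
so \eqref{semi.brace} holds with $\sigma(a) = \kappa_a(1_\diamond)\diamond\hat\kappa_a(1_\diamond) = a\circ 1_\diamond$. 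I do not anticipate any genuine obstacle; the only subtlety is checking that in the (4) $\Rightarrow$ (1) step the two independently chosen functions $\kappa_a$ and $\hat\kappa_a$ are each forced into the expected form and that their ``pieces'' at $1_\diamond$ combine to the single cocycle $\sigma$.
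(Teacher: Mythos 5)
Your proposal is correct and follows essentially the same route as the paper: each converse is obtained by specialising the hypothesised identity at $1_\diamond$ to force the auxiliary maps into the form $\sigma(a)^\diamond\diamond(a\circ c)$ or $(a\circ b)\diamond\sigma(a)^\diamond$ with $\sigma(a)=a\circ 1_\diamond$, and the equivalence with (5) uses Lemma~\ref{lem.diamond.inv} in one direction and the substitution $c=1_\diamond$ in the other. The only cosmetic difference is in step (4): the paper sets only $c=1_\diamond$ and reduces (4) to (2), whereas you use both specialisations to land directly on (1); your identity $\sigma(a)=\kappa_a(1_\diamond)\diamond\hat\kappa_a(1_\diamond)$ checks out, including the order of the inverses in the non-abelian case.
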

\begin{proof}
Obviously statement (1) implies  statements (2), (3) and (4). 

Assume that (2) holds, and let $\sigma(a) = a\circ 1_\diamond$. Setting $b=1_\diamond$ in \eqref{distribute} we obtain that $a\circ c = \sigma(a)\diamond \lambda_a(c)$. Therefore,
\eqref{distribute} can be rewritten as
$$
a\circ (b\diamond c) = (a\circ b) \diamond \sigma(a)^\diamond \diamond \sigma(a)\diamond \lambda_a(c) = (a\circ b) \diamond \sigma(a)^\diamond \diamond(a\circ c).
$$ 
Hence $(A,\diamond,\circ,\sigma)$ is a skew truss.

Assume that (3) holds. Letting $\sigma (a) = a\circ 1_\diamond$ and setting $c= 1_\diamond$ in \eqref{distribute.mu} we obtain that $a\circ b = \mu_a(b)\diamond \sigma(a)$. Hence  
\eqref{distribute} can be rewritten as
$$
a\circ (b\diamond c) = \mu_a(b)\diamond \sigma(a) \diamond \sigma(a)^\diamond \diamond (a\circ c)  = (a\circ b) \diamond \sigma(a)^\diamond \diamond(a\circ c),
$$ 
and $(A,\diamond,\circ,\sigma)$ is a skew truss, as required.

Finally, let us assume that (4) holds and define $\tau(a) = \hat\kappa_a(1_\diamond)$. Setting $c=1_\diamond$ in \eqref{distribute.kap} we obtain 
$$
a\circ b =  \kappa_a(b)\diamond \tau(a) .
$$
As a result, \eqref{distribute.kap} can be rewritten as
$$
a\circ (b\diamond c) = \kappa_a(b)\diamond \tau(a)\diamond \tau(a)^\diamond \diamond \hat\kappa_a(c) = (a\circ b)\diamond \tau(a)^\diamond \diamond \hat\kappa_a(c).
$$ 
Therefore, \eqref{distribute.kap} takes the form of \eqref{distribute} (with $\lambda_a(c) = \tau(a)^\diamond \diamond \hat\kappa_a(c)$), i.e.\ statement (4) implies statement (2). 

It remains to show that (1) is equivalent to (5). Assume first that $(A,\diamond,\circ)$ is a truss with cocycle $\sigma$. Then, using the truss distributive law and Lemma~\ref{lem.diamond.inv}, we can compute, for all $a,b,c,d\in A$,
 \begin{align*}
a\circ [b,c,d] &= a\circ \left(b\diamond c^\diamond \diamond d\right)
= (a\circ b)\diamond \sigma(a)^\diamond \diamond \left(a\circ \left(c^\diamond \diamond d\right)\right)\\
&= (a\circ b)\diamond \sigma(a)^\diamond \diamond \sigma(a) \diamond (a\circ c)^\diamond \diamond (a\circ d) = [a\circ b, a\circ c, a\circ d],
\end{align*}
hence the distributive law \eqref{heap.distribute} holds. 

Conversely, setting $c=1_\diamond$ in \eqref{heap.distribute}  and using \eqref{sigma} we obtain the truss distributive law (for all $a,b,d\in A$). 

This completes the proof of the theorem.
\end{proof}

\begin{rem}\label{rem.heap}
The characterisation of truss distributive law as the ditstributive law between the heap and semigroup operations (i.e.\ the equivalence of statements (1) and (5) in Theorem~\ref{thm.equiv})  was observed by Michael Kinyon \cite{Kin:pri}. 
The equivalence of statements (1) and (5) in Theorem~\ref{thm.equiv} suggests a generalisation of the notion of a skew truss, whereby one considers a system $(A, [-,-,-],\circ)$ such that $(A,[-,-,-])$ is a heap or herd, i.e.\
$[-,-,-]$ is a ternary operation on $A$, such that, for all $a_i\in A$, $i=1,\ldots, 5$,
\begin{equation}\label{heap}
\left[\left[a_1,a_2,a_3\right],a_4,a_5\right] = \left[a_1,a_2,\left[a_3,a_4,a_5\right]\right], \qquad \left[a_1,a_2,a_2\right] = a_1 =\left[a_2,a_2,a_1\right],
\end{equation}
(see \cite[page 170]{Pru:the},  \cite[page 202, footnote]{Bae:ein}  or \cite[Definition~2]{Cer:ter}), 
 $(A,\circ)$ is a semigroup, and $\circ$ distributes over $[-,-,-]$ through \eqref{heap.distribute}. This possibility of generalisation is, however, somewhat deceptive:
\end{rem}

\begin{lem}\label{lem.heap}
Let $(A, [-,-,-])$ be a heap with additional associative operation $\circ$ such that the distributive law \eqref{heap.distribute} holds. Fix  $e\in A$ and consider the induced group structure on $A$,
\begin{equation}\label{group.heap}
a\diamond_e b = [a,e,b], \qquad 1_{\diamond_e} = e, \qquad a^{\diamond_e} = [e,a,e], \qquad \mbox{for all $a,b\in A$};
\end{equation}
cf.\ \cite{Bae:ein}. Then $(A,\diamond_e,\circ)$ is a skew truss with the cocycle $\sigma_e: A\to A$, $\sigma_e(a) = a\circ e$.
\end{lem}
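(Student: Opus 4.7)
The plan is to reduce this to Theorem~\ref{thm.equiv} by recognising that the ternary operation $[-,-,-]$ given in the hypothesis coincides with the ternary operation on the group $(A,\diamond_e)$ that appears in statement~(5) of that theorem.

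First I would recall that $(A,\diamond_e)$ is a group; this is the classical observation of Pr\"ufer--Baer (cited as (\ref{group.heap})). Specifically, $e$ is a two-sided unit since the heap axioms \eqref{heap} give $a\diamond_e e = [a,e,e]=a$ and $e\diamond_e a = [e,e,a]=a$; inverses work because $[a,e,[e,a,e]] = [[a,e,e],a,e]=[a,a,e]=e$ by heap associativity and the degeneracy axiom, and similarly from the left; finally, associativity of $\diamond_e$ reads $[[a,e,b],e,c]=[a,e,[b,e,c]]$, which is heap associativity applied at the middle $e$. None of these steps are obstacles but they need to be recorded to justify the use of~\eqref{group.heap}.

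The crucial observation is that the induced ternary operation $a\diamond_e b^{\diamond_e}\diamond_e c$ equals $[a,b,c]$. Using \eqref{group.heap},
\[
a\diamond_e b^{\diamond_e}\diamond_e c = \bigl[[a,e,[e,b,e]],e,c\bigr],
\]
and two applications of heap associativity together with $[e,e,c]=c$ and $[a,e,e]=a$ collapse this to $[a,b,c]$. Thus the hypothesis \eqref{heap.distribute}, which states that $\circ$ distributes over $[-,-,-]$, is literally statement~(5) of Theorem~\ref{thm.equiv} applied to the group $(A,\diamond_e)$.

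Having done this, Theorem~\ref{thm.equiv} immediately yields that $(A,\diamond_e,\circ,\sigma)$ is a skew truss for some cocycle $\sigma:A\to A$. To identify $\sigma$, I would invoke Lemma~\ref{lem.coc.prop}, which forces
\[
\sigma(a) \;=\; a\circ 1_{\diamond_e} \;=\; a\circ e \;=\; \sigma_e(a),
\]
completing the proof. The whole argument is essentially a translation exercise; the only subtle point is the collapse of the iterated bracket expression to $[a,b,c]$, which I expect to be the main (albeit minor) calculational obstacle.
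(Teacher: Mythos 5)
Your proof is correct. The pivotal computation --- that the ternary operation $a\diamond_e b^{\diamond_e}\diamond_e c$ induced by the group $(A,\diamond_e)$ collapses back to $[a,b,c]$ --- goes through exactly as you anticipate: $[a,e,[e,b,e]]=[[a,e,e],b,e]=[a,b,e]$ and then $[[a,b,e],e,c]=[a,b,[e,e,c]]=[a,b,c]$, using only the axioms \eqref{heap}. With that in hand, the hypothesis \eqref{heap.distribute} becomes verbatim statement (5) of Theorem~\ref{thm.equiv} for the group $(A,\diamond_e)$, and the implication (5) $\Rightarrow$ (1) established there (no circularity: that equivalence is proved before Lemma~\ref{lem.heap} and makes no use of it) yields the truss structure, with the cocycle forced to be $a\mapsto a\circ 1_{\diamond_e}=a\circ e$ by Lemma~\ref{lem.coc.prop}. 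The paper takes a different, more direct route: it verifies the truss distributive law \eqref{semi.brace} itself, expanding $(a\circ b)\diamond_e\sigma_e(a)^{\diamond_e}\diamond_e(a\circ c)$ as an iterated bracket, collapsing it via the heap axioms to $[a\circ b,\,a\circ e,\,a\circ c]$, and then applying \eqref{heap.distribute} once to recognise this as $a\circ[b,e,c]=a\circ(b\diamond_e c)$. The two arguments involve the same amount of bracket manipulation; yours has the advantage of isolating the reusable fact that the heap of the induced group recovers the original heap (so the distributivity hypothesis is literally condition (5)), while the paper's version is self-contained and does not lean on the equivalence theorem.
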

\begin{proof}
Using \eqref{heap}, \eqref{group.heap} and the definition of $\sigma_e$ we can compute, for all $a,b,c\in A$,
\begin{align*}
(a\circ b)\diamond_e\sigma_e(a)^{\diamond_e}\diamond_e (a\circ c) &=
\left[a\circ b, e,\left[\left[e, a\circ e, e\right], e, a\circ c\right]\right]\\
&= \left[a\circ b, e,\left[e, a\circ e, \left[ e, e, a\circ c\right]\right]\right]\\
& =  \left[a\circ b, e,\left[e, a\circ e, a\circ c\right]\right]\\
& =  \left[\left[a\circ b, e, e\right], a\circ e, a\circ c\right]\\
& = \left[a\circ b, a\circ e, a\circ c\right] = a\circ [b,e,c] = a\circ \left(b\diamond_e c\right),
\end{align*}
where the penultimate equality is a consequence of \eqref{heap.distribute}.
\end{proof}

Although the heap point of view on trusses might not lead to more general structures, it can point out to a way of constructing examples of skew trusses, which could be relevant to applications (see Remark~\ref{rem.YB}). For example, it quickly indicates that, given one skew truss, there is a whole family of skew trusses:

\begin{cor}\label{cor.family}
Let $(A,\diamond,\circ,\sigma)$ be a skew truss. Fix an element $e\in A$ and define new group operation on $A$ by
$$
a\diamond_e b := a\diamond e^{\diamond}\diamond b.
$$
Then $(A,\diamond_e,\circ)$ is a skew truss with the cocycle $\sigma_e: A\to A$, $\sigma_e(a) = a\circ e$.
\end{cor}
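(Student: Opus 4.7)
The statement is an immediate application of Lemma~\ref{lem.heap} together with the heap characterisation of trusses provided by Theorem~\ref{thm.equiv}. The plan is therefore to recognise $\diamond_e$ as the group structure induced on the underlying heap $(A,[-,-,-])$ of $(A,\diamond)$ by the choice of $e$ as neutral element, and then invoke the two aforementioned results.

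First I would note that the ternary operation $[a,b,c] = a\diamond b^\diamond\diamond c$ makes $(A,[-,-,-])$ into a heap in the sense of \eqref{heap}. Since $(A,\diamond,\circ,\sigma)$ is a skew truss, Theorem~\ref{thm.equiv} (equivalence of (1) and (5)) implies that $\circ$ distributes over $[-,-,-]$, i.e.\ the identity \eqref{heap.distribute} holds. Next I would observe that, by the very definition of $\diamond_e$,
$$
a\diamond_e b = a\diamond e^\diamond\diamond b = [a,e,b],
$$
so that $\diamond_e$ coincides with the group operation $\diamond_e$ defined in \eqref{group.heap} of Lemma~\ref{lem.heap} (with neutral element $e$).

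At this point Lemma~\ref{lem.heap} applies verbatim: since $(A,[-,-,-],\circ)$ is a heap with an associative operation distributing over $[-,-,-]$, the quadruple $(A,\diamond_e,\circ,\sigma_e)$ with $\sigma_e(a)=a\circ e$ is a skew truss. This yields the desired conclusion with no further computation.

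There is no real obstacle here, as the corollary is a direct specialisation of Lemma~\ref{lem.heap}; the only point requiring care is the matching of the formula $a\diamond_e b=a\diamond e^\diamond\diamond b$ with the induced group operation \eqref{group.heap}, which is immediate. Should one prefer to avoid invoking the heap formalism, an alternative (but longer) route would be to verify the truss distributive law \eqref{semi.brace} for $(A,\diamond_e,\circ,\sigma_e)$ directly: using \eqref{binv.c} one computes
$$
a\circ(b\diamond_e c)=a\circ(b\diamond e^\diamond\diamond c)=(a\circ b)\diamond(a\circ e)^\diamond\diamond(a\circ c),
$$
while expanding the right-hand side $(a\circ b)\diamond_e\sigma_e(a)^{\diamond_e}\diamond_e(a\circ c)$ through the definition of $\diamond_e$ produces the same expression, after the cancellations of $e\diamond e^\diamond$; finally $\sigma_e(a)=a\circ 1_{\diamond_e}=a\circ e$ is consistent with Lemma~\ref{lem.coc.prop}. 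Either route completes the proof.
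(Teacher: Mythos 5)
Your proof is correct and follows exactly the paper's own route: the paper likewise observes that $a\diamond_e b=[a,e,b]$ for the heap operation of Theorem~\ref{thm.equiv}(5) and then cites Lemma~\ref{lem.heap}. The supplementary direct verification via \eqref{binv.c} is also sound, but it is not needed.
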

\begin{proof}
If $[-,-,-]$ is the heap operation associated to $\diamond$ as in Theorem~\ref{thm.equiv}~(5), then $a\diamond_e b = [a,e,b]$. Hence the assertion follows by Lemma~\ref{lem.heap}.
\end{proof}

In particular, if $e=\sigma^{n-1}(1_\diamond)$ in Corollary~\ref{cor.family}, then the corresponding skew truss has cocycle $\sigma^n$, since $a\circ e = \sigma^{n-1}(a\circ 1_\diamond) = \sigma^{n}(a)$, by Lemma~\ref{lem.coc.prop}.

To every skew truss $(A,\diamond,\circ,\sigma)$ one can associate two actions of the semigroup $(A,\circ)$ on $A$ by group endomorphisms of $(A,\diamond)$. With respect to these actions the map $\sigma$ satisifies  conditions reminiscent of one-cocycle conditions for group actions; this motivates the terminology used for $\sigma$. 

\begin{thm}\label{thm.brace}
Let  $(A,\diamond,\circ,\sigma)$ be a  skew truss. 
\begin{zlist}
\item 
The semigroup $(A,\circ)$ acts from the left on $(A,\diamond)$ by group endomorphisms through  $\lambda, \mu: A\to \mathrm{Map}(A,A)$, defined by
\begin{subequations}\label{action.maps}
\begin{equation}\label{lam.sig} 
\lambda: a\mapsto \lambda_a, \qquad \lambda_a(b) = \sigma(a)^\diamond \diamond (a\circ b), 
\end{equation}
\begin{equation}\label{mu}
\mu: a\mapsto \mu_a, \qquad \mu_a(b)= (a\circ b)\diamond\sigma(a)^\diamond.
\end{equation}
\end{subequations}
\item The map 
$\sigma$ satisfies the following {\em cocycle conditions},  for all $a,b\in A$,
\begin{equation}\label{sig.cocycle}
\sigma(a\circ b) = \sigma(a)\diamond (a\la \sigma(b)), \qquad \sigma(a\circ b) = (a\bla \sigma(b)) \diamond \sigma(a), 
\end{equation}
where
$a\la b = \lambda_a(b)$ and $a\bla b = \mu_a(b)$ are actions implemented by $\lambda$ and $\mu$.
 \end{zlist} 
\end{thm}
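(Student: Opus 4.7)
The plan is to prove both claims by direct calculation using only the truss distributive law \eqref{semi.brace}, the inverse identity \eqref{binv.c} of Lemma~\ref{lem.diamond.inv}, and the cocycle equivariance \eqref{equivariant} from Lemma~\ref{lem.coc.prop}. I would focus the argument on the $\lambda$-side, as the $\mu$-side is fully symmetric (with \eqref{b.cinv} playing the role of \eqref{binv.c}).

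First I would verify that each $\lambda_a$ is a group endomorphism of $(A,\diamond)$. Applying $\lambda_a$ to $b\diamond c$ and substituting from \eqref{semi.brace} gives
$$
\lambda_a(b\diamond c)=\sigma(a)^\diamond\diamond(a\circ(b\diamond c))=\sigma(a)^\diamond\diamond(a\circ b)\diamond\sigma(a)^\diamond\diamond(a\circ c),
$$
which factorises as $\lambda_a(b)\diamond\lambda_a(c)$; that $\lambda_a(1_\diamond)=1_\diamond$ is immediate from \eqref{sigma}.

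Next, for the action property $\lambda_{a\circ b}=\lambda_a\lambda_b$, I would compute the two sides separately. On the one hand, associativity of $\circ$ gives $\lambda_{a\circ b}(c)=\sigma(a\circ b)^\diamond\diamond(a\circ(b\circ c))$. On the other hand, $\lambda_a(\lambda_b(c))=\sigma(a)^\diamond\diamond\bigl(a\circ(\sigma(b)^\diamond\diamond(b\circ c))\bigr)$; here I would expand the inner $\circ$-product via \eqref{binv.c} to obtain $\sigma(a)\diamond(a\circ\sigma(b))^\diamond\diamond(a\circ(b\circ c))$, then replace $a\circ\sigma(b)$ by $\sigma(a\circ b)$ using the equivariance \eqref{equivariant}. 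The resulting $\sigma(a)^\diamond\diamond\sigma(a)$ pair telescopes, matching the first expression exactly.

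Finally, the left cocycle identity follows immediately from the definition \eqref{lam.sig} together with \eqref{equivariant}:
$$
\sigma(a)\diamond(a\la\sigma(b))=\sigma(a)\diamond\sigma(a)^\diamond\diamond(a\circ\sigma(b))=a\circ\sigma(b)=\sigma(a\circ b).
$$
The right cocycle identity and the $\mu$-half are obtained by the symmetric computations. The only subtlety, such as it is, is keeping track of which $\sigma$-factors cancel in the composition $\lambda_a\lambda_b$; no step goes beyond routine bookkeeping.
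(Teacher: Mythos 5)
Your proposal is correct and follows essentially the same route as the paper: endomorphism property via the truss distributive law, the action property via \eqref{binv.c} and the equivariance \eqref{equivariant} with the same telescoping of $\sigma(a)^\diamond\diamond\sigma(a)$, and the cocycle identities read off directly from \eqref{equivariant}. The $\mu$-half is handled in the paper by the same symmetric computation you indicate, so there is nothing to add.
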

\begin{proof}
The theorem is proven by direct calculations that use the associativity of both operations and the truss distributive law \eqref{semi.brace}. Explicitly, for all $a,b,c\in A$,
\begin{align*}
\lambda_a(b\diamond c) &=\sigma(a)^\diamond  \diamond \left(a\circ\left(b\diamond c\right)\right)= \sigma(a)^\diamond  \diamond (a\circ b) \diamond \sigma(a)^\diamond\diamond(a\circ c)
= \lambda_a(b)\diamond \lambda_a(c).
\end{align*}
Similarly,
\begin{align*}
\mu_a(b\diamond c) &= \left(a\circ\left(b\diamond c\right)\right)\diamond\sigma(a)^\diamond = (a\circ b) \diamond \sigma(a)^\diamond\diamond(a\circ c)\diamond\sigma(a)^\diamond 
= \mu_a(b)\diamond \mu_a(c),
\end{align*}
which proves that, for all $a$, the maps $\lambda_a$ and $\mu_a$ are group endomorphisms. 

Furthermore, for all $a,b,c\in A$,
\begin{align*}
\lambda_a \left(\lambda_b(c)\right) &= \sigma(a)^\diamond \diamond \left(a\circ \lambda_b(c)\right)   
= \sigma(a)^\diamond \diamond \left(a\circ \left(\sigma(b)^\diamond \diamond\left(b\circ c\right) \right) \right) 
\\
&= \sigma(a)^\diamond \diamond \sigma(a)\diamond \left(a\circ \sigma(b)\right)^\diamond \diamond\left(a\circ b\circ c\right)\\
& = \sigma(a\circ b)^\diamond \diamond \left(a\circ b\circ c\right) = \lambda_{a\circ b}(c),
\end{align*}
where the third equality follows by \eqref{binv.c} in Lemma~\ref{lem.diamond.inv} and the penultimate equality follows by \eqref{equivariant}. Therefore, $\lambda$ defines an action as claimed. In a similar way, 
\begin{align*}
\mu_a \left(\mu_b(c)\right) &= \left(a\circ \mu_b(c)\right) \diamond\sigma(a)^\diamond  = \left(a\circ\left(\left(b\circ c\right)\diamond\sigma(b)^\diamond \right)\right)\diamond\sigma(a)^\diamond \\
&= \left(a\circ b\circ c\right)\diamond \left(a\circ \sigma(b)\right)^\diamond \diamond\sigma(a) \diamond\sigma(a)^\diamond\\
& = \left(a\circ b\circ c\right)\diamond\sigma(a\circ b)^\diamond = \mu_{a\circ b}(c),
\end{align*}
where the third equality follows by \eqref{b.cinv} in Lemma~\ref{lem.diamond.inv} and the penultimate equality follows by \eqref{equivariant}. Therefore, $\mu$ also defines an action of $(A,\circ)$ on $A$. This proves part (1) of the theorem.

Using \eqref{equivariant} one quickly finds
$$
\sigma (a\circ b) = a\circ\sigma(b) = \sigma(a) \diamond \sigma(a)^\diamond \diamond \left(a\circ\sigma(b)\right)= \sigma(a) \diamond  \left(a\la \sigma(b)\right),
$$
and 
$$
\sigma (a\circ b) = a\circ\sigma(b) = \left(a\circ\sigma(b)\right)\diamond\sigma(a)^\diamond \diamond\sigma(a) = \left(a\bla \sigma(b)\right)\diamond \sigma(a),
$$
which proves \eqref{sig.cocycle} and thus completes the proof of the theorem.
\end{proof}

Note that  the actions associated to the truss in Example~\ref{ex.truss} mutually coincide  and are equal to $a\la b = a\bla b = 1_\diamond$. In the case of the left skew ring, actions are mutually equal and given by $\circ$. Finally, in the case of a skew brace, the actions come out as
$$
a\la b = a^\diamond \diamond (a\circ b), \qquad a\bla b = (a\circ b)\diamond a^\diamond .
$$

In general we note that the actions $\la$, $\bla$ are related by
$$
\sigma(a)\diamond (a\la b) = (a\bla b)\diamond \sigma(a), \qquad \mbox{for all $a,b \in A$},
$$
and that the actions mutually coincide if the group $(A,\diamond)$ is abelian. In view of Theorem~\ref{thm.equiv} and Theorem~\ref{thm.brace}, and using the notation introduced in the latter,  the skew truss distributive law can be presented in (at least) three equivalent ways,
\begin{align*}
a\circ (b\diamond c) = (a\bla b)\diamond (a\circ c)
= (a\circ b)\diamond\sigma(a)^\diamond\diamond (a\circ c)
= (a\circ b)\diamond (a\la c),
\end{align*}
i.e.\ there are one-to-one correspondences between $\sigma$, $\lambda$ and $\mu$ in Theorem~\ref{thm.equiv}.

\begin{prop}\label{prop.morph}
Let $(A,\diamond_A,\circ_A,\sigma_A)$ and  $(B,\diamond_B,\circ_B,\sigma_B)$ be skew trusses. A function $f:A\to B$  that is a morphism of corresponding groups and semigroups 
renders commutative the following diagrams:
\begin{equation}\label{mor.diag.sig}
\xymatrix{ A\ar[rr]^-{f}\ar[d]^{\sigma_A} && B\ar[d]_{\sigma_B} \\
A\ar[rr]^-{f} && B},
\end{equation}
\begin{equation}\label{mor.diag.mu.lam}
\xymatrix{A \ar[r]^-{\lambda^A}\ar[d]^{f} & \mathrm{Map}(A,\!A) \ar[r]^{f_*} & \mathrm{Map}(A,\!B)\\
B \ar[rr]^-{\lambda^B} && \mathrm{Map}(B,\!B) \ar[u]_{f^*},}\quad 
\xymatrix{A \ar[r]^-{\mu^A}\ar[d]^{f} & \mathrm{Map}(A,\!A) \ar[r]^{f_*} & \mathrm{Map}(A,\! B)\\
B \ar[rr]^-{\mu^B} && \mathrm{Map}(B,\!B) \ar[u]_{f^*},}
\end{equation}
where
$\lambda^{A,B}$ and $\mu^{A,B}$ are the actions corresponding to $\sigma_{A,B}$ through equations \eqref{action.maps} in Theorem~\ref{thm.brace}.
\end{prop}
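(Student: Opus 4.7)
The plan is to first establish the commutativity of diagram \eqref{mor.diag.sig}, since that compatibility between $f$ and the cocycles will feed directly into the verification of the remaining two diagrams. The argument here is immediate from Lemma~\ref{lem.coc.prop}: since $\sigma_A(a) = a\circ_A 1_{\diamond_A}$, and since $f$ is a morphism of groups (so $f(1_{\diamond_A}) = 1_{\diamond_B}$) as well as of semigroups, I compute
$$
f(\sigma_A(a)) \;=\; f(a\circ_A 1_{\diamond_A}) \;=\; f(a)\circ_B f(1_{\diamond_A}) \;=\; f(a)\circ_B 1_{\diamond_B} \;=\; \sigma_B(f(a)),
$$
which is exactly the required commutativity.

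Next I would unpack what the two diagrams in \eqref{mor.diag.mu.lam} actually assert. Tracing the top route of the left diagram sends $a\in A$ to $f_*(\lambda^A_a) = f\circ \lambda^A_a$, while the bottom route sends $a$ to $f^*(\lambda^B_{f(a)}) = \lambda^B_{f(a)}\circ f$; so commutativity is the pointwise identity $f(\lambda^A_a(b)) = \lambda^B_{f(a)}(f(b))$ for all $a,b\in A$, and analogously for $\mu$. These are verified by substituting the explicit formulas \eqref{lam.sig} and \eqref{mu} and applying the morphism properties of $f$ together with the cocycle compatibility just proved:
\begin{align*}
f(\lambda^A_a(b)) &= f\bigl(\sigma_A(a)^{\diamond_A}\diamond_A (a\circ_A b)\bigr) = f(\sigma_A(a))^{\diamond_B}\diamond_B f(a\circ_A b)\\
&= \sigma_B(f(a))^{\diamond_B}\diamond_B \bigl(f(a)\circ_B f(b)\bigr) = \lambda^B_{f(a)}(f(b)),
\end{align*}
and a parallel chain of equalities, with the order of the two factors reversed, handles $\mu$.

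There is no real obstacle here: once the diagrams are translated into the pointwise identities above, the proof reduces to the elementary fact that a morphism of groups commutes with inversion and with the binary operation, combined with the compatibility $f\sigma_A = \sigma_B f$ from the first step. If anything, the only place to be careful is reading off the diagrams correctly (distinguishing $f_*$, which post-composes with $f$, from $f^*$, which pre-composes), since both natural transformations land in the same hom-set $\mathrm{Map}(A,B)$.
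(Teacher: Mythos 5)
Your proof is correct and follows essentially the same route as the paper: establish $f\sigma_A=\sigma_B f$ from $\sigma(a)=a\circ 1_\diamond$, then translate the two diagrams into the pointwise identities $f(\lambda^A_a(b))=\lambda^B_{f(a)}(f(b))$ and its $\mu$-analogue and verify them using the explicit formulas \eqref{lam.sig}, \eqref{mu} together with the morphism properties of $f$. The only cosmetic difference is that the paper reduces the pointwise identity to $\sigma_B(f(a))=f(\sigma_A(a))$ by cancellation, whereas you run the same computation as a direct forward chain.
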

\begin{proof}
Assume that $f:A\to B$ is a morphism of corresponding groups and semigroups. Using \eqref{sigma}, we find, for all $a\in A$,
$$
\sigma_B(f(a)) = f(a)\circ_B 1_{\diamond_B} = f(a)\circ_B f(1_{\diamond_A}) = f( a\circ_A 1_{\diamond_A}) = f(\sigma_A(a)).
$$ 
Hence the diagram \eqref{mor.diag.sig} is commutative, as required.

The commutativity of the first of diagrams \eqref{mor.diag.mu.lam} is equivalent to
\begin{equation}\label{mor.eq}
 \lambda^B_{f(a)}\left(f\left(a'\right)\right) = f\left(\lambda^A_a\left(a'\right)\right),
\end{equation}
 for all $a,a'\in A$. Using the relations between actions and cocycles \eqref{lam.sig} as well as the facts that $f$ is a morphism of semigroups (on the left hand side of the above equality) and a morphism of groups (on the right hand side), we obtain that  condition \eqref{mor.eq} is equivalent to 
\begin{equation}\label{mor.eq'}
\sigma_{B}\left(f\left(a\right)\right)^\diamond\diamond f\left(a\circ a'\right) = f\left(\sigma_A\left(a\right)\right)^\diamond\diamond f\left(a\circ a'\right).
\end{equation}
By the cancellation laws, \eqref{mor.eq'} is equivalent to
\begin{equation}\label{mor.eq.sig}
\sigma_{B}\left(f\left(a\right)\right) = f\left(\sigma_A\left(a\right)\right),
\end{equation}
for all $a\in A$, which holds by the commutativity of \eqref{mor.diag.sig}.

Evaluated on elements and in terms of the actions \,$\bla$\,, the second of diagrams \eqref{mor.diag.mu.lam} reads
\begin{equation}\label{mor.eq.mu}
f\left(a\bla b\right) = f(a)\bla f(b).
\end{equation}
Provided that $f$ is both a group and semigroup homomorphism, the equivalence of \eqref{mor.eq.mu} with the formula \eqref{mor.eq.sig} follows from the definition \,$\bla$\, by the analogous arguments as in the case of the other action, and then assertion follows by the commutativity of \eqref{mor.diag.sig}.
 \end{proof}
 
 In view of Proposition~\ref{prop.morph} a map that preserves group and semigroup operations also preserves all the other truss structure (cocycles and actions). Thus, as is the case for skew braces and skew rings or near-rings, by a {\em morphism of skew trusses} we mean a map which is a morphism of corresponding groups and semigroups. 
 
 \begin{rem}\label{rem.heap.morph}
 The interpretation of the truss distributive law as a distributive law between a ternary heap and a binary semigroup operations in Theorem~\ref{thm.equiv} suggests another possible definition of a morphism of skew trusses;
 in this context a map $f:A\to B$ that is a morphism of corresponding heaps, i.e.\ for all $a_1,a_2,a_3\in A$,
 $$
 f\left(\left[a_1,a_2,a_3\right]\right) =  \left[f(a_1),f(a_2),f(a_3)\right],
 $$
 and a morphism of semigroups would serve as a morphism of skew trusses.\footnote{This suggestion was made by Michael Kinyon \cite{Kin:pri}.}
 \endeg
 \end{rem}

A truss structure can be ported to any group or a semigroup through an isomorphism. 
 
 \begin{lem}\label{lem.ind}
 Let $(A,\diamond,\circ,\sigma)$ be a skew  truss.
 \begin{zlist}
 \item  Let $(B,\ast)$ be a group. Given an isomorphim of groups $f:B\to A$, define a binary operation on $B$ by porting $\circ$ to $B$ through $f$, i.e.\
 $$
 a\circ_f b = f^{-1}\left(f(a)\circ f(b)\right), \qquad \mbox{for all $a,b\in B$}.
 $$
 Then $(B,\ast,\circ_f)$ is a skew truss isomorphic to $(A,\diamond,\circ,\sigma)$, with the cocycle $\sigma_B := f^{-1}\sigma f$.
 \item   Let $(B,\bullet)$ be a semigroup. Given an isomorphim of semigroups $g:B\to A$, define a binary operation on $B$ by porting $\diamond$ to $B$ through $g$, i.e.\
 $$
 a\diamond_g b = g^{-1}\left(g(a)\diamond g(b)\right), \qquad \mbox{for all $a,b\in B$}.
 $$
 Then $(B,\diamond_g,\bullet)$ is a skew truss isomorphic to $(A,\diamond,\circ,\sigma)$, with the cocycle $\sigma_B := g^{-1}\sigma g$.
 \end{zlist}
 \end{lem}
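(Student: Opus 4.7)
The plan is to verify the truss distributive law \eqref{semi.brace} for the transported operations directly, exploiting the fact that in each of the two parts the transport bijection becomes simultaneously a homomorphism for \emph{both} operations and hence an isomorphism of groups that intertwines group inverses.

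For part (1), associativity of $\circ_f$ is immediate from conjugating $\circ$ by the bijection $f$, so $(B,\circ_f)$ is a semigroup. Since $f:(B,\ast)\to(A,\diamond)$ is a group isomorphism, $f^{-1}$ is one as well, and in particular $f^{-1}(x\diamond y)=f^{-1}(x)\ast f^{-1}(y)$ and $f^{-1}(x^{\diamond})=f^{-1}(x)^{\ast}$ for all $x,y\in A$. For $a,b,c\in B$ I would compute
\[
a\circ_f(b\ast c)=f^{-1}\bigl(f(a)\circ(f(b)\diamond f(c))\bigr),
\]
apply \eqref{semi.brace} inside $A$, and then push $f^{-1}$ through the resulting triple $\diamond$-product to obtain
\[
(a\circ_f b)\ast\sigma_B(a)^{\ast}\ast(a\circ_f c),\qquad \sigma_B:=f^{-1}\sigma f,
\]
which is the truss distributive law for $(B,\ast,\circ_f,\sigma_B)$. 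That $f$ is then an isomorphism of trusses is automatic: it preserves $\ast$ by hypothesis, it preserves $\circ_f$ by the very definition $f(a\circ_f b)=f(a)\circ f(b)$, and by Proposition~\ref{prop.morph} any such map automatically intertwines the cocycles.

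Part (2) runs on the same template with the roles of the two operations exchanged. The defining formula for $\diamond_g$ transports the group operation to $B$ via $g$, so $(B,\diamond_g)$ is a group and $g:(B,\diamond_g)\to(A,\diamond)$ is a group isomorphism; by hypothesis $g$ is also a semigroup isomorphism. One then computes
\[
a\bullet(b\diamond_g c)=g^{-1}\bigl(g(a)\circ(g(b)\diamond g(c))\bigr),
\]
applies \eqref{semi.brace} in $A$, and pushes $g^{-1}$ through the $\diamond$-product, recovering the truss distributive law with cocycle $\sigma_B=g^{-1}\sigma g$.

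There is really no serious obstacle here: both parts are bookkeeping around the observation that a bijection which is a homomorphism of one operation on both sides and which defines the other by transport is automatically an isomorphism for the transported operation as well, and hence commutes with the corresponding group inverses. The only small point deserving explicit mention is the identity $f^{-1}(\sigma(f(a))^{\diamond})=\sigma_B(a)^{\ast}$ in part (1) (and its analogue in part (2)), but this is immediate from $f^{-1}$ being a group isomorphism, and it is precisely what makes the inverse term on the right-hand side of \eqref{semi.brace} transport correctly. Consistency with Lemma~\ref{lem.coc.prop} also provides a quick sanity check, since $a\circ_f 1_\ast=f^{-1}(f(a)\circ 1_\diamond)=f^{-1}(\sigma(f(a)))=\sigma_B(a)$.
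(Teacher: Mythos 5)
Your proposal is correct and follows essentially the same route as the paper: transport the computation into $A$, apply the truss distributive law \eqref{semi.brace} there, and push the inverse bijection back through the resulting triple $\diamond$-product, using that the transport map is an isomorphism for both operations. The paper likewise proves only part (1) by this direct computation and notes that part (2) is analogous, so nothing further is needed.
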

 \begin{proof}
 We only prove statement (1). Statement (2) is proven in a similar way (see the proof of Lemma~\ref{lem.ind.Hopf} for additional hints).
 
 Since $(A,\circ)$ is a semigroup, so is $(B,\circ_f)$ and $f$ is an isomorphism of semigroups. Furthermore, for all $a,b,c\in B$,
 \begin{align*}
 a\circ_f(b\ast c) &= f^{-1}\left(f(a)\circ f(b\ast c)\right) = f^{-1}\left(f(a)\circ \left(f(b)\diamond f(c)\right)\right)\\
 &= f^{-1}\left(\left(f(a)\circ f(b)\right)\diamond \sigma(f(a))^\diamond \diamond \left(f(a)\circ f(c)\right)\right)\\
 &= f^{-1}\left(\left(f(a)\circ f(b)\right)\right)\ast f^{-1}\left(\sigma(f(a))\right)^\ast \ast f^{-1} \left(f(a)\circ f(c)\right)\\
 &= (a\circ_f b)\ast \sigma_B(a)^\ast \ast (a\circ_f c),
 \end{align*}
 where the second and the fourth equalities follow by the fact that $f$ is a group isomorphism, while the third equality is a consequence of \eqref{semi.brace}. Hence   $(B,\ast,\circ_f)$ is a skew truss with the cocycle $\sigma_B = f^{-1}\sigma f$, as stated. 
 \end{proof}

\begin{rem}\label{rem.hierarchy}
Although the group $(A,\diamond_e)$ in Corollary~\ref{cor.family} is isomorphic to $(A,\diamond)$ via the map
$$
f : (A,\diamond) \to (A,\diamond_e), \qquad a\mapsto a\diamond e,
$$
the map $f$ is not an isomorphism of skew trusses $(A,\diamond, \circ,\sigma)$ and $(A,\diamond_e,\circ,\sigma_e)$ in general. On the other hand, following the porting procedure described in Lemma~\ref{lem.ind}, one can construct a skew truss isomorphic to $(A,\diamond_e, \circ,\sigma_e)$ in which the group structure is given by $\diamond$ and the semigroup structure is deformed:
\begin{align*}
a\circ_e b &= f^{-1}\left(f(a)\circ f(b)\right) = \left(\left(a\diamond e\right)\circ\left(b\diamond e\right)\right)\diamond e^\diamond\\
&= \left(\left(a\diamond e\right)\circ b\right)\diamond \sigma(a\diamond e)^\diamond\diamond \sigma_e(a\diamond e)\diamond e^\diamond,
\end{align*}
where the second expression follows by \eqref{semi.brace} and Corollary~\ref{cor.family}. In view of Lemma~\ref{lem.ind}, the cocycle corresponding to $(A,\diamond,\circ_e)$ comes out as
$$
\tau_e : a\mapsto \sigma_e(a\diamond e)\diamond e^\diamond.
$$
The map $f$ is an isomorphism from $(A,\diamond,\circ_e,\tau_e)$ to $(A,\diamond_e,\circ,\sigma_e)$.
\endeg
\end{rem}

By Lemma~\ref{lem.coc.prop}, a cocycle $\sigma$ is left equivariant with respect to the operation $\circ$; see \eqref{equivariant}. If $1_\diamond$  is central in $(A,\circ)$, then $\sigma$ is almost multiplicative with respect to $\diamond$:
\begin{lem}\label{lem.sig.add}
Let $(A,\diamond,\circ,\sigma)$ be a skew truss, such that $1_\diamond$ is central in $(A,\circ)$.
Then, for all positive integers $n$, and for all $a,b\in A$,
\begin{equation}\label{add}
\sigma^n(a\diamond b)  = \sigma^n(a)\diamond \sigma^n(1_\diamond)^{\diamond}\diamond \sigma^n(b).
\end{equation}
If, in addition also the $n$-th  power of $1_\diamond$ in $(A,\circ)$ is central, then
\begin{equation}\label{add.act}
\sigma^n (a\diamond b)  =\sigma^n(a)\diamond (\sigma^{n-1}(1_\diamond) \la b) =  (\sigma^{n-1}(1_\diamond) \bla a)\diamond\sigma^n(b),
\end{equation}
where $\la$ and $\bla$ are the left actions of $(A,\circ)$ on $(A,\diamond)$  defined in Theorem~\ref{thm.brace}. 
\end{lem}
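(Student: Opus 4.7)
My plan is to reduce the lemma to the single observation that under centrality of $1_\diamond$, iterates of $\sigma$ become $\circ$-multiplication by a $\circ$-power of $1_\diamond$. By Lemma~\ref{lem.coc.prop} and centrality, $\sigma(a) = a\circ 1_\diamond = 1_\diamond\circ a$, so by a straightforward induction on $n$
\[
\sigma^n(a) = u_n \circ a, \qquad u_n := \underbrace{1_\diamond\circ \cdots \circ 1_\diamond}_{n \text{ factors}}.
\]
Taking $a=1_\diamond$ identifies $u_n = \sigma^{n-1}(1_\diamond)$, and the same induction propagates centrality from $1_\diamond$ to every $u_n$; in particular the extra hypothesis for \eqref{add.act} is automatic given the standing hypothesis of the lemma.

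With this in hand, \eqref{add} should drop out from a single application of the truss distributive law \eqref{semi.brace} at the element $u_n$: the left-hand side of \eqref{add} equals $u_n\circ(a\diamond b)$, which by \eqref{semi.brace} expands to $(u_n\circ a)\diamond \sigma(u_n)^\diamond\diamond(u_n\circ b)$, and rewriting $u_n\circ x = \sigma^n(x)$ together with $\sigma(u_n) = \sigma^n(1_\diamond)$ yields exactly the claimed identity. No separate induction on $n$ is needed for \eqref{add} itself; the induction is entirely absorbed into the preliminary formula $\sigma^n(a) = u_n\circ a$.

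For \eqref{add.act} I would simply substitute the defining formulas \eqref{lam.sig} and \eqref{mu} of the two actions. Using $u_n = \sigma^{n-1}(1_\diamond)$ and $\sigma(u_n) = \sigma^n(1_\diamond)$,
\[
\sigma^{n-1}(1_\diamond)\la b = \sigma^n(1_\diamond)^\diamond \diamond \bigl(\sigma^{n-1}(1_\diamond)\circ b\bigr) = \sigma^n(1_\diamond)^\diamond \diamond \sigma^n(b),
\]
and the analogous computation using \eqref{mu} yields $\sigma^{n-1}(1_\diamond)\bla a = \sigma^n(a)\diamond \sigma^n(1_\diamond)^\diamond$. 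Substituting each of these into \eqref{add} produces, respectively, the two equalities of \eqref{add.act}.

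The only point that requires any care is the bookkeeping identification $u_n = \sigma^{n-1}(1_\diamond)$ together with the index shift $\sigma(u_n) = u_{n+1} = \sigma^n(1_\diamond)$; once that is in place, the whole lemma collapses to one instance of \eqref{semi.brace} supplemented by the definitions of the actions.
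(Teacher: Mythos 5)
Your proof is correct, but it is organized differently from the paper's. The paper proves \eqref{add} by induction on $n$ directly on the identity itself: the base case is your one-line application of \eqref{semi.brace} at $1_\diamond$, but the inductive step requires first deriving the auxiliary formula $\sigma(a^\diamond)=\sigma(1_\diamond)\diamond\sigma(a)^\diamond\diamond\sigma(1_\diamond)$ (by setting $b=a^\diamond$ in the base case) and then a somewhat fiddly expansion of $\sigma\bigl(\sigma^k(a)\diamond\sigma^k(1_\diamond)^\diamond\diamond\sigma^k(b)\bigr)$. You instead push all the induction into the elementary statement $\sigma^n(a)=u_n\circ a$ with $u_n=1_\diamond^{\circ n}$ (which is just iterated equivariance plus centrality), and then obtain \eqref{add} from a single application of \eqref{semi.brace} at the element $u_n$; this avoids the auxiliary identity for $\sigma(a^\diamond)$ entirely and makes the index bookkeeping $u_n=\sigma^{n-1}(1_\diamond)$, $\sigma(u_n)=\sigma^n(1_\diamond)$ the only delicate point. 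For \eqref{add.act} both arguments amount to unwinding \eqref{lam.sig} and \eqref{mu} at $\sigma^{n-1}(1_\diamond)$; the paper reaches $\sigma^n(b)=\sigma^{n-1}(1_\diamond)\circ b$ via \eqref{equivariant} followed by centrality, whereas your formula $\sigma^n=u_n\circ(-)$ already encodes this. You also correctly observe that the lemma's additional hypothesis for \eqref{add.act} is redundant: the $n$-th $\circ$-power of $1_\diamond$ is a product of central elements and hence automatically central, a small sharpening the paper does not record.
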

\begin{proof}
The lemma is proven inductively. We use the definition of $\sigma$ \eqref{sigma}, the centrality of $1_\diamond$ in $(A,\circ)$ and the compatibility condition \eqref{semi.brace}  to obtain
\begin{eqnarray*}
\sigma(a\diamond b) \!\!\!&=&\!\!\!  (a\diamond b)\circ 1_\diamond = 1_\diamond \circ (a\diamond b) \\
  \!\!\!&=&\!\!\! (1_\diamond \circ a)\diamond \sigma(1_\diamond)^\diamond \diamond (1_\diamond \circ b)
=\sigma(a)\diamond \sigma(1_\diamond)^{\diamond}\diamond \sigma(b).
  \end{eqnarray*} 
Setting $b=a^{\diamond}$ in the just derived equality we obtain
\begin{equation}\label{sig.adiam}
\sigma(a^\diamond) = \sigma(1_\diamond)\diamond\sigma(a)^\diamond\diamond\sigma(1_\diamond).
\end{equation}
Assume that \eqref{add} holds for some $k\in \NN$. Then, 
\begin{eqnarray*}
\sigma^{k+1}(a\diamond b) \!\!\!&=&\!\!\!  \sigma\left(\sigma^k(a)\diamond \sigma^k(1_\diamond)^{\diamond}\diamond \sigma^k(b)\right)\\
 \!\!\!&=&\!\!\!\sigma^{k+1}(a)\diamond \sigma(1_\diamond)^\diamond \diamond\sigma\left(\sigma^k(1_\diamond)^{\diamond}\right)\diamond \sigma(1_\diamond)^\diamond\diamond \sigma^{k+1}(b)\\
 \!\!\!&=&\!\!\!\ \sigma^{k+1}(a)\diamond \sigma^{k+1}(1_\diamond)^{\diamond}\diamond \sigma^{k+1}(b),
 \end{eqnarray*}
 by the inductive assumption and \eqref{sig.adiam}. The first assertion then follows by the principle of mathematical induction.
 
 In view of \eqref{sigma}, the $n$-th power of $1_\diamond$ in $(A,\circ)$ is equal to $\sigma^{n-1}(1_\diamond)$. Using its centrality as well as \eqref{sigma} and \eqref{equivariant}, we can compute, for all $b\in A$,
 \begin{align*}
 \sigma^n(1_\diamond)^\diamond\diamond \sigma^n(b) & = \sigma\left( \sigma^{n-1}(1_\diamond)\right)^\diamond\diamond \left(b\circ \sigma^{n-1}(1_\diamond) \right)\\
 &= \sigma\left( \sigma^{n-1}(1_\diamond)\right)^\diamond\diamond \left(\sigma^{n-1}(1_\diamond) \circ b\right) =
 \sigma^{n-1}(1_\diamond) \la b.
 \end{align*}
 The formula \eqref{add.act} follows now by \eqref{add}. The second equality in \eqref{add.act} is proven by similar arguments.
\end{proof}

\begin{cor}\label{cor.group.hom}
Let $(A,\diamond,\circ,\sigma)$ be a skew truss, such that $1_\diamond$ is central in $(A,\circ)$.
For any $n\in \NN$, let $\diamond_n$ be the group operation modified by $e=\sigma^n(1_\diamond)$ as in Corollary~\ref{cor.family}. Then $\sigma^n$ is a group homomorphism from $(A,\diamond)$ to $(A,\diamond_{n})$. 
\end{cor}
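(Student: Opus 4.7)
The plan is to observe that this corollary is essentially a direct reformulation of equation \eqref{add} in Lemma~\ref{lem.sig.add}, once one unpacks the definition of $\diamond_n$. The ``real work'' has already been performed in that lemma, so the proof is short.

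First, I would spell out the target group operation: by Corollary~\ref{cor.family} applied with $e = \sigma^n(1_\diamond)$, the operation $\diamond_n$ is given by $x \diamond_n y = x \diamond \sigma^n(1_\diamond)^{\diamond} \diamond y$ for all $x,y \in A$. The neutral element of $(A,\diamond_n)$ is $\sigma^n(1_\diamond)$, so in particular $\sigma^n$ automatically sends the identity of $(A,\diamond)$ to the identity of $(A,\diamond_n)$, as required for a group homomorphism.

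Next, I would take arbitrary $a,b \in A$ and compute
\[
\sigma^n(a) \diamond_n \sigma^n(b) \;=\; \sigma^n(a) \diamond \sigma^n(1_\diamond)^{\diamond} \diamond \sigma^n(b) \;=\; \sigma^n(a \diamond b),
\]
where the first equality is the definition of $\diamond_n$ just recorded, and the second equality is precisely \eqref{add} of Lemma~\ref{lem.sig.add}, which is applicable because the centrality hypothesis on $1_\diamond$ in $(A,\circ)$ is assumed. This is exactly the statement that $\sigma^n$ is a group homomorphism $(A,\diamond)\to(A,\diamond_n)$.

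Since the entire content of the corollary is absorbed by Lemma~\ref{lem.sig.add}, there is no genuine obstacle; the only thing to double-check is the bookkeeping that $\sigma^n(1_\diamond)$ is indeed the neutral element of $\diamond_n$ (true by construction in Corollary~\ref{cor.family}), which is what makes the rearrangement $\sigma^n(a)\diamond \sigma^n(1_\diamond)^\diamond\diamond \sigma^n(b) = \sigma^n(a)\diamond_n\sigma^n(b)$ valid.
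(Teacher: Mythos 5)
Your proposal is correct and coincides with the paper's own argument: the paper likewise proves the corollary by combining formula \eqref{add} of Lemma~\ref{lem.sig.add} with the definition of $\diamond_n$ from Corollary~\ref{cor.family} (with $e=\sigma^n(1_\diamond)$). The extra bookkeeping you include about the neutral element is harmless and in fact redundant, since preservation of the binary operation by a map between groups already forces preservation of identities.
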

\begin{proof}
The assertion follows immediately from the formula \eqref{add} in Lemma~\ref{lem.sig.add} and the definition of $\diamond_n$ through Corollary~\ref{cor.family}, and by the comments following the latter.
\end{proof} 

We end this section by noting similar interpretation of $\sigma$ from the heap point of view.

\begin{lem}\label{lem.heap.mor}
Let $(A,\diamond,\circ,\sigma)$ be a skew truss, such that $1_\diamond$ is central in $(A,\circ)$, and let $[-,-,-]$ be the heap ternary operation associated to $(A,\diamond)$ as in Theorem~\ref{thm.equiv}(5). Then  $\sigma$ is an endomorphism of the heap $(A,[-,-,-])$.
\end{lem}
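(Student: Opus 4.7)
The goal is to verify, for all $a,b,c\in A$, the heap morphism identity
$$
\sigma([a,b,c]) = [\sigma(a),\sigma(b),\sigma(c)],
$$
i.e.\ $\sigma(a\diamond b^\diamond \diamond c) = \sigma(a)\diamond \sigma(b)^\diamond \diamond \sigma(c)$. The strategy is a direct computation powered entirely by the two ``almost additivity'' properties of $\sigma$ that were established under precisely the centrality hypothesis in force, namely formula \eqref{add} of Lemma~\ref{lem.sig.add} (with $n=1$) and formula \eqref{sig.adiam} derived inside its proof.

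The plan is as follows. First, apply \eqref{add} twice to break up $\sigma(a\diamond b^\diamond \diamond c)$: once to separate $a$ from $b^\diamond \diamond c$, and a second time to separate $b^\diamond$ from $c$. This produces the expression
$$
\sigma(a)\diamond \sigma(1_\diamond)^\diamond \diamond \sigma(b^\diamond)\diamond \sigma(1_\diamond)^\diamond \diamond \sigma(c).
$$
Next, substitute \eqref{sig.adiam} for the middle term $\sigma(b^\diamond)=\sigma(1_\diamond)\diamond \sigma(b)^\diamond \diamond \sigma(1_\diamond)$. After this substitution the two flanking factors $\sigma(1_\diamond)^\diamond$ telescope against the newly introduced copies of $\sigma(1_\diamond)$, leaving exactly $\sigma(a)\diamond \sigma(b)^\diamond \diamond \sigma(c)$, which is by definition $[\sigma(a),\sigma(b),\sigma(c)]$.

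There is no serious obstacle: the argument is a short bookkeeping calculation, and the only nontrivial ingredients are already packaged into Lemma~\ref{lem.sig.add}. The reason the centrality hypothesis is essential is that both \eqref{add} and \eqref{sig.adiam} depended on it; without the ability to move $1_\diamond$ through $\circ$, the decomposition of $\sigma$ on a $\diamond$-product would not be available, so one would have no way to split $\sigma([a,b,c])$ at all. Conceptually, the statement can also be read as saying that Corollary~\ref{cor.group.hom} (with $n=1$) is already enough: since $\sigma\colon(A,\diamond)\to(A,\diamond_1)$ is a group homomorphism and the heap operation associated to $\diamond_1$ agrees with the heap operation associated to $\diamond$ (heaps are invariant under translation of the chosen neutral element), $\sigma$ automatically respects the ternary bracket.
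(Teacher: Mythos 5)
Your proof is correct, but it takes a genuinely different route from the paper's. The paper's argument is a one-line application of the equivalence in Theorem~\ref{thm.equiv}(5): by \eqref{sigma} and centrality, $\sigma([a,b,c]) = [a,b,c]\circ 1_\diamond = 1_\diamond\circ[a,b,c]$, and then the distributivity of $\circ$ over the heap operation \eqref{heap.distribute} gives $[1_\diamond\circ a, 1_\diamond\circ b, 1_\diamond\circ c] = [\sigma(a),\sigma(b),\sigma(c)]$ directly. You instead work at the level of the binary operation, invoking \eqref{add} with $n=1$ twice and the inversion formula \eqref{sig.adiam}, and letting the $\sigma(1_\diamond)$-factors telescope; I checked the bookkeeping and it closes up exactly as you claim, since $\sigma(1_\diamond)^\diamond\diamond\sigma(1_\diamond)$ cancels on both sides of $\sigma(b)^\diamond$. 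What the paper's route buys is brevity and a conceptual point (the lemma is really just \eqref{heap.distribute} read through the centrality of $1_\diamond$); what your route buys is independence from the heap-distributivity reformulation, relying only on the additivity package of Lemma~\ref{lem.sig.add}. Your closing remark is also sound: the heap operations associated to $\diamond$ and $\diamond_e$ do coincide (one checks $a\diamond_e b^{\diamond_e}\diamond_e c = a\diamond b^\diamond\diamond c$ by unwinding \eqref{group.heap}), so Corollary~\ref{cor.group.hom} with $n=1$ already implies the statement; that is a third, arguably cleanest, derivation not present in the paper.
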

\begin{proof}
In view of \eqref{sigma}, centrality of  $1_\diamond$  in $(A,\circ)$ and the distributive law \eqref{heap.distribute}, one computes, for all $a,b,c \in A$,
\begin{align*}
\sigma \left(\left[a,b,c\right]\right) &= \left[a,b,c\right] \circ 1_\diamond = 1_\diamond\circ \left[a,b,c\right]
= \left[ 1_\diamond\circ a,\;  1_\diamond\circ b,\;  1_\diamond\circ c\right]\\
&= \left[a\circ 1_\diamond,\; b\circ 1_\diamond,\; c\circ 1_\diamond\right] = \left[\sigma(a), \sigma(b), \sigma(c)\right],
\end{align*}
i.e.\ $\sigma$ is an endomorphism of the heap $(A,[-,-,-])$; see Remark~\ref{rem.heap.morph}.
\end{proof}

\section{Piths of truss morphisms}\label{sec.kernel}
To any morphism of skew trusses one can associate a set which contains the kernel of the morphism as a group homomorphism and  which is closed under the semigroup operation $\circ$.
\begin{defn}\label{def.pith}
Let  $f$ be a morphism from a skew truss $(A,\diamond, \circ,\sigma_A)$ to a skew truss $(B,\diamond, \circ,\sigma_B)$. For any $n\in \NN$, we define a subset of $A$,
\begin{equation}\label{kernel.slice}
\left(\pi f\right)^n := \{ a\in A\; |\;  f\left(a\right) = \sigma_B^{n}(1_\diamond)\}
\end{equation}
and call it the {\em $n$-th chamber of the pith of $f$}. The {\em pith} of $f$ is defined by
\begin{equation}\label{kernel}
\pi f := \bigcup_{n\in \NN} \left(\pi f\right)^n = \{a\in A\; |\; \exists m\in \NN,\; f\left(a\right) = \sigma_B^{m}(1_\diamond)\}.
\end{equation}
\end{defn}

Note that none of the chambers of the pith $\pi f$ is empty, since in view of the commutative diagram \eqref{mor.diag.sig} in Proposition~\ref{prop.morph}, $\sigma_A^n(1_\diamond) \in \left(\pi f\right)^n$, for all $n\in \NN$.

\begin{ex}\label{ex.ker}
Let $(A,\diamond, \circ, \sigma_A)$ be a skew truss and let $(B,\diamond, \circ, \sigma_B)$ be a skew truss associated to an   idempotent endomap $\sigma_B\in \mathrm{Map}(B,B)$  as in Example~\ref{ex.truss}. Let $f$ be a morphism from  $(A,\diamond, \circ, \sigma_A)$ to $(B,\diamond, \circ, \sigma_B)$.
Since 
$$
\sigma_B^n = \begin{cases} \sigma_B & \mbox{if $n>0$}\cr
\mathrm{id}_B & \mbox{if $n=0$},
\end{cases} 
$$
$(\pi f)^n =  (\pi f)^1$, for all $n>0$, and so the pith of $f$ comes out as: 
$$
 \pi f  = \{a\in A\; |\; f(a) = 1_\diamond\} \cup \{a\in A\; |\; f(a) =\sigma_B(1_\diamond)\}. 
$$
\endeg
\end{ex}

Obviously, the zeroth chamber of the pith of $f$  is equal to the kernel of $f$ as a group homomorphism, i.e.\ $\ker f = (\pi f)^0$. Both in the case of skew rings, i.e.\ when $\sigma(a) = 1_\diamond$, and  that of skew braces, in which case $\sigma(a) = a$, all chambers of the pith of a morphism coincide with the zeroth chamber. Thus, in particular, the pith of  a morphism of skew braces is equal to its kernel as defined in \cite[Definition~1.2]{GuaVen:ske}.  

Since $(\pi f)^0$ is the kernel of a group morphism it is a normal subgroup of $(A,\diamond)$. The structure of $\pi f$ with respect to the semigroup operation $\circ$ is investigated in the following
\begin{lem}\label{lem.kernel}
Let  $f$ be a morphism from a skew truss $(A,\diamond, \circ,\sigma_A)$ to a skew truss $(B,\diamond, \circ,\sigma_B)$. Then:
\begin{zlist}
\item For all $a\in \left(\pi f\right)^m$ and $b\in \left(\pi f\right)^n$, $a\circ b \in \left(\pi f\right)^{m+n+1}$.
\item The pith of $f$ is a sub-semigroup of $(A,\circ)$.
\item For all $a\in \left(\pi f\right)^m$, $\sigma_A(a) \in \left(\pi f\right)^{m+1}$.
\item If, for any $(m,n)\in \NN^2$, $\sigma_B^m(1_\diamond) = \sigma_B^n(1_\diamond)$ implies that $m= n$, then the chambers of the pith of $f$ define a partition of $\pi f$, and $(\pi f,\circ)$ is an $\NN_+$-graded semigroup with grading given by the chamber labels shifted by one.
\end{zlist}
\end{lem}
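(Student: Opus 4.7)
The plan is to chase definitions through the compatibility relations already established (the equivariance of $\sigma$ in Lemma~\ref{lem.coc.prop} and the commutativity of the diagrams in Proposition~\ref{prop.morph}), and then read off the algebraic structure of $\pi f$.

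For part (1), I would start from $a \in (\pi f)^m$ and $b \in (\pi f)^n$, so that $f(a) = \sigma_B^m(1_\diamond)$ and $f(b) = \sigma_B^n(1_\diamond)$. Applying $f$, which is a semigroup morphism, reduces the claim to the identity
\[
\sigma_B^m(1_\diamond) \circ \sigma_B^n(1_\diamond) = \sigma_B^{m+n+1}(1_\diamond)
\]
in $B$. The key auxiliary fact is that the equivariance relation \eqref{equivariant}, $\sigma(x \circ y) = x \circ \sigma(y)$, iterates: a short induction on $k$ gives $x \circ \sigma^k(y) = \sigma^k(x \circ y)$. Combining this with $\sigma(1_\diamond) = 1_\diamond \circ 1_\diamond$ (so $\sigma^{m+1}(1_\diamond) = \sigma^m(1_\diamond) \circ 1_\diamond$), I get
\[
\sigma_B^m(1_\diamond) \circ \sigma_B^n(1_\diamond) = \sigma_B^n\bigl(\sigma_B^m(1_\diamond) \circ 1_\diamond\bigr) = \sigma_B^n\bigl(\sigma_B^{m+1}(1_\diamond)\bigr) = \sigma_B^{m+n+1}(1_\diamond),
\]
which is exactly what is needed. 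Statement (2) then follows at once: any two elements of $\pi f$ lie in some chambers $(\pi f)^m, (\pi f)^n$, and by (1) their $\circ$-product lands in $(\pi f)^{m+n+1} \subseteq \pi f$.

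For part (3), I would simply use the commutative diagram \eqref{mor.diag.sig}: for $a \in (\pi f)^m$,
\[
f(\sigma_A(a)) = \sigma_B(f(a)) = \sigma_B\bigl(\sigma_B^m(1_\diamond)\bigr) = \sigma_B^{m+1}(1_\diamond),
\]
placing $\sigma_A(a)$ in the next chamber.

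For part (4), the injectivity hypothesis $\sigma_B^m(1_\diamond) = \sigma_B^n(1_\diamond) \Rightarrow m = n$ immediately gives $(\pi f)^m \cap (\pi f)^n = \emptyset$ for $m \neq n$, so the non-empty chambers partition $\pi f$. Relabelling chamber $(\pi f)^m$ with degree $m+1 \in \NN_+$, part (1) translates to $(\pi f)^m \circ (\pi f)^n \subseteq (\pi f)^{m+n+1}$, i.e.\ a degree-$(m{+}1)$ element multiplied by a degree-$(n{+}1)$ element gives a degree-$((m{+}1)+(n{+}1))$ element, which is precisely the $\NN_+$-grading condition.

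I do not foresee a real obstacle here; everything is a bookkeeping exercise once one notices that the iterated equivariance collapses $\sigma_B^m(1_\diamond) \circ \sigma_B^n(1_\diamond)$ to $\sigma_B^{m+n+1}(1_\diamond)$. The only mild subtlety is the $+1$ shift in the chamber index (and hence in the grading) coming from the fact that $1_\diamond$ is generally \emph{not} an identity for $\circ$, so composing the $m$-th and $n$-th $\sigma$-iterates of $1_\diamond$ produces the $(m+n+1)$-st rather than the $(m+n)$-th; this is why the natural grading index set is $\NN_+$ rather than $\NN$.
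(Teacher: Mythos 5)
Your proposal is correct and follows essentially the same route as the paper: part (1) reduces via the semigroup morphism property to the identity $\sigma_B^m(1_\diamond)\circ\sigma_B^n(1_\diamond)=\sigma_B^{m+n+1}(1_\diamond)$, obtained from $\sigma(x)=x\circ 1_\diamond$ and the iterated equivariance $x\circ\sigma^k(y)=\sigma^k(x\circ y)$, and parts (2)--(4) are the same bookkeeping the paper carries out. No gaps.
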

\begin{proof}
(1) Let us take any $a$ and $b$ for which there exist $m,n\in \NN$ such that
$$
f\left(a\right) = \sigma_B^{m}(1_\diamond) \quad \mbox{and} \quad f\left(b\right)= \sigma_B^{n}(1_\diamond).
$$
Then
\begin{align*}
f\left(a\circ b\right) &=f\left(a\right)\circ f\left(b\right)
= f(a)\circ \sigma_B^{n}\left(1_\diamond\right)\\
&= \sigma_B^{n}\left(f(a)\circ 1_\diamond\right)
 =  \sigma_B^{n}\left(\sigma^m_B(1_\diamond)\circ 1_\diamond\right) 
=  \sigma_B^{m+n+1}(1_\diamond),
\end{align*}
where we have used that $f$ is a semigroup morphism and   the properties of cocycles listed in Lemma~\ref{lem.coc.prop}. Therefore, $a\circ b\in  \left(\pi f\right)^{m+n+1}$.

(2) This follows immediately from part (1).

(3) This follows from the commutative diagram \eqref{mor.diag.sig} in Proposition~\ref{prop.morph} and  the definition of pith chambers in \eqref{kernel.slice}.

(4) An element $a$ of the pith of $f$ belongs to two different chambers if and only if there exist $m\neq n$ such that
$$
f\left(a\right) = \sigma_B^{m}(1_\diamond) \quad \mbox{and} \quad   f\left(a\right)= \sigma_B^{n}(1_\diamond).
$$
Hence the assumption means that 
any two chambers labeled by different integers have empty intersections, i.e.\ the chambers give a partition of $\pi f$. This defines the grading function
$$
\deg : \pi f \to \NN_+, \qquad \deg(a) =n+1\quad \mbox{if and only if} \quad a\in \left(\pi f\right)^{n},
$$
which is compatible with the semigroup operation by part (1). Therefore, $\pi f$ is an $\NN_+$-graded semigroup as stated.
 \end{proof}
 
 \begin{rem}\label{rem.pith.grad}
 It is clear from Lemma~\ref{lem.kernel}~(4) that, without any assumptions on $\sigma_B$, the disjoint union of pith chambers,
 $$
 \mathrm{Pi} f := \bigsqcup_{n\in \NN} \left(\pi f\right)^n,
 $$
 is an $\NN_+$-graded semigroup. 
 \endeg
 \end{rem}

\section{Skew trusses and skew braces}\label{sec.truss.brace}
The aim of this section is to determine when a cocycle in a skew truss is invertible and then to show that if the semigroup $(A,\circ)$ is a group, then the cocycle is invertible and hence there is an associated skew brace. In other words, one can identify skew trusses built on two groups with skew braces (this identification is not through an isomorphism in the category of trusses, though).

\begin{lem}\label{lem.sigma.inverse}
Let  $(A,\diamond,\circ,\sigma)$ be a  skew truss. Then the following statements are equivalent:
\begin{rlist}
\item The cocycle $\sigma$ is bijective.
\item The operation $\circ$ has a right identity with respect to which $1_\diamond$ is invertible.
\end{rlist}
\end{lem}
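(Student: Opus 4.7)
The key formula to lean on is Lemma~\ref{lem.coc.prop}, which gives $\sigma(a)=a\circ 1_\diamond$ and the equivariance identity $\sigma(a\circ b)=a\circ\sigma(b)$. Together these turn the question about bijectivity of $\sigma$ into a question about the invertibility of right multiplication by $1_\diamond$ in $(A,\circ)$.

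\textbf{Plan for (ii) $\Rightarrow$ (i).} Suppose $e\in A$ is a right identity for $\circ$ and $u\in A$ satisfies $1_\diamond\circ u=u\circ 1_\diamond=e$. For injectivity of $\sigma$: from $\sigma(a)=\sigma(b)$, i.e.\ $a\circ 1_\diamond=b\circ 1_\diamond$, compose on the right with $u$ and use associativity to get $a\circ e=b\circ e$, hence $a=b$. For surjectivity: given $c\in A$, the element $c\circ u$ satisfies
$$
\sigma(c\circ u)=(c\circ u)\circ 1_\diamond=c\circ(u\circ 1_\diamond)=c\circ e=c.
$$

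\textbf{Plan for (i) $\Rightarrow$ (ii).} Assume $\sigma$ is a bijection and set $e:=\sigma^{-1}(1_\diamond)$, so that $\sigma(e)=e\circ 1_\diamond=1_\diamond$. The equivariance \eqref{equivariant} gives, for every $a\in A$,
$$
\sigma(a\circ e)=a\circ\sigma(e)=a\circ 1_\diamond=\sigma(a),
$$
so injectivity of $\sigma$ yields $a\circ e=a$; hence $e$ is a right identity. To build the inverse, set $u:=\sigma^{-1}(e)=\sigma^{-2}(1_\diamond)$; then by definition $u\circ 1_\diamond=\sigma(u)=e$, and applying equivariance once more together with the just-proved identity $1_\diamond\circ e=1_\diamond$ gives
$$
\sigma(1_\diamond\circ u)=1_\diamond\circ\sigma(u)=1_\diamond\circ e=1_\diamond,
$$
so injectivity of $\sigma$ forces $1_\diamond\circ u=\sigma^{-1}(1_\diamond)=e$. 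Hence $u$ is the required inverse of $1_\diamond$ relative to $e$.

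\textbf{Expected obstacle.} The main subtlety is the direction (i) $\Rightarrow$ (ii): $e$ is only a right identity, so two-sided invertibility of $1_\diamond$ does not come for free. The trick is that equivariance \eqref{equivariant} lets us compare $\sigma(1_\diamond\circ u)$ with $\sigma(e)$ without needing a left identity, and injectivity of $\sigma$ then promotes the equality from the $\sigma$-image to $A$ itself. Everything else is direct manipulation using associativity of $\circ$ and the two identities of Lemma~\ref{lem.coc.prop}.
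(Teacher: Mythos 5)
Your proposal is correct and follows essentially the same route as the paper: both directions hinge on $\sigma(a)=a\circ 1_\diamond$ and the equivariance \eqref{equivariant}, with the same choices $e=\sigma^{-1}(1_\diamond)$ and $u=\sigma^{-1}(e)$, and the same inverse $\sigma^{-1}(a)=a\circ u$ for the converse. The only cosmetic difference is that you invoke injectivity of $\sigma$ where the paper uses the left equivariance of $\sigma^{-1}$, which amounts to the same computation.
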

\begin{proof}
Assume first that $\sigma$ is a bijection and set
\begin{equation}\label{e,u}
e = \sigma^{-1}(1_\diamond),\qquad u = \sigma^{-1}(e).
\end{equation} 
In consequence of the equivariance property \eqref{equivariant}, $\sigma^{-1}$ is also left equivariant, i.e., for all $a,b\in A$,
\begin{equation}\label{equiv.sig-1}
\sigma^{-1}(a\circ b) = a\circ  \sigma^{-1}( b).
\end{equation}
Therefore, for  all $a\in A$,
$$
a = \sigma^{-1}(\sigma (a)) = \sigma^{-1}(a\circ 1_\diamond) = a\circ e,
$$
where the second equality follows by \eqref{sigma}, and the last one by \eqref{equiv.sig-1} and \eqref{e,u}. Hence $e$ is a right identity for $\circ$. Furthermore, using the above calculation as well as \eqref{equiv.sig-1} and the definition of $u$  in \eqref{e,u}  one finds, for all $a\in A$,
$$
\sigma^{-1} (a) =\sigma^{-1}(a\circ e) = a\circ \sigma^{-1}(e) = a\circ u.
$$
Consequently,
$$
1_\diamond \circ u = \sigma^{-1}(1_\diamond) = e = \sigma(u) = u\circ 1_\diamond.
$$
Therefore $u$ is the inverse of $1_\diamond$ with respect to the right identity $e$.

In the converse direction, denote by $u$ the inverse of $1_\diamond$ with respect to a right identity for $\circ$. Then the inverse of $\sigma$ is given by $\sigma^{-1}(a) = a\circ u.$
 \end{proof}
 \begin{cor}\label{cor.sigma.inverse}
 If $(A,\diamond,\circ,\sigma)$ is a  skew truss such that
 $(A,\circ)$ is a monoid with identity $1_\circ$, then 
\begin{equation}\label{sig.uni}
\sigma(1_\circ) = 1_\diamond.
\end{equation}
 \end{cor}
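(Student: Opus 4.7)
My plan is to read off the result directly from Lemma~\ref{lem.coc.prop}. That lemma showed, purely from the truss distributive law \eqref{semi.brace}, that the cocycle is determined by $\sigma(a) = a\circ 1_\diamond$ for every $a\in A$ (equation~\eqref{sigma}). This identity was derived using only that $(A,\diamond)$ is a group and $(A,\circ)$ is a semigroup, so it remains in force once $(A,\circ)$ happens to be a monoid.

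Given the additional hypothesis that $(A,\circ)$ carries a two-sided identity $1_\circ$, I would specialise \eqref{sigma} to $a = 1_\circ$. The defining property of a monoid identity gives $1_\circ \circ 1_\diamond = 1_\diamond$, and hence $\sigma(1_\circ) = 1_\diamond$, which is the desired conclusion.

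There is really no obstacle to overcome: the corollary is a one-line specialisation of Lemma~\ref{lem.coc.prop}. I would deliberately avoid routing the argument through Lemma~\ref{lem.sigma.inverse}, since invoking the converse direction of that lemma would require $1_\diamond$ to be invertible in the monoid $(A,\circ)$, which is strictly more than the present hypothesis grants. The substantive content of the corollary is therefore conceptual rather than computational: in a truss the neutral elements of $\diamond$ and $\circ$ need not coincide, but the cocycle $\sigma$ always sends the monoid identity onto the group identity.
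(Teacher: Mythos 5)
Your proposal is correct and is exactly the paper's argument: the paper's proof consists of the single remark that the corollary follows directly from equation~\eqref{sigma} in Lemma~\ref{lem.coc.prop}, i.e.\ $\sigma(1_\circ)=1_\circ\circ 1_\diamond=1_\diamond$. Your additional observation that one should not route through Lemma~\ref{lem.sigma.inverse} is sound but not needed, since the paper never does so either.
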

\begin{proof}
The lemma follows directly from  \eqref{sigma} in Lemma~\ref{lem.coc.prop}. 
\end{proof}

\begin{lem}\label{lem.brace}
If  $(A,\diamond,\circ,\sigma)$ is a  skew truss such that
$(A,\circ)$ is a group, then $A$ is a skew brace with respect to $\diamond$ and operation $\bullet$ defined by
\begin{equation}\label{bullet}
a\bullet b = \sigma^{-1}(a)\circ b = a\circ 1_\diamond^\circ\circ b, \qquad \mbox{for all $a,b\in A$}.
\end{equation}
\end{lem}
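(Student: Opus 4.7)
The plan is in three steps, each leveraging results already established in the section.

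First, I would pin down $\sigma^{-1}$. Since $(A,\circ)$ is a group with identity $1_\circ$, Corollary~\ref{cor.sigma.inverse} gives $\sigma(1_\circ) = 1_\diamond$, so $1_\diamond$ has a $\circ$-inverse $1_\diamond^\circ$. Lemma~\ref{lem.sigma.inverse} then applies, showing $\sigma$ is a bijection with the explicit formula $\sigma^{-1}(a) = a\circ 1_\diamond^\circ$ (from the final line of its proof) and, by evaluation, $\sigma^{-1}(1_\diamond) = 1_\circ$. This also shows that the two expressions on the right-hand side of \eqref{bullet} coincide, and that $\sigma^{-1}$ inherits the left $(A,\circ)$-equivariance $\sigma^{-1}(a\circ b) = a\circ \sigma^{-1}(b)$.

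Second, I would verify that $(A,\bullet)$ is a group. The cleanest route is to observe that $\sigma$ is a semigroup isomorphism from $(A,\circ)$ onto $(A,\bullet)$, by the short calculation
$$
\sigma(a)\bullet \sigma(b) = \sigma^{-1}(\sigma(a))\circ \sigma(b) = a\circ \sigma(b) = \sigma(a\circ b),
$$
where the last equality uses the equivariance \eqref{equivariant}. Being the bijective image of a group, $(A,\bullet)$ is itself a group, with identity $\sigma(1_\circ) = 1_\diamond$ and inverses transported from $(A,\circ)$.

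Third, I would derive the brace distributive law \eqref{brace.law} by substituting $\sigma^{-1}(a)$ for $a$ in the truss distributive law \eqref{semi.brace}. Since $\sigma(\sigma^{-1}(a)) = a$, this yields directly
$$
a\bullet(b\diamond c) = \sigma^{-1}(a)\circ(b\diamond c) = (a\bullet b)\diamond a^\diamond \diamond (a\bullet c),
$$
which together with step two shows $(A,\diamond,\bullet)$ is a skew brace.

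I do not anticipate a real obstacle: the only conceptual step is the recognition that $\sigma$ is the magma isomorphism $(A,\circ)\to(A,\bullet)$, which simultaneously equips $(A,\bullet)$ with its group structure and reduces the brace law to one substitution into the truss law. The rest is bookkeeping with the previously established properties of $\sigma$.
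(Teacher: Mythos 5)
Your proof is correct. It differs from the paper's argument in one essential respect: the paper, after establishing bijectivity of $\sigma$ exactly as you do, appeals to Theorem~\ref{thm.brace} to see that $\sigma$ is a bijective $1$-cocycle from $(A,\circ)$ to $(A,\diamond)$ and then invokes the external result \cite[Proposition~1.11]{GuaVen:ske} to conclude that the transported operation $a\bullet b=\sigma\left(\sigma^{-1}(a)\circ\sigma^{-1}(b)\right)$ makes $A$ a skew brace, only afterwards simplifying this to $\sigma^{-1}(a)\circ b$ via \eqref{equivariant}. You instead verify the skew brace axioms directly: your observation that $\sigma(a)\bullet\sigma(b)=\sigma(a\circ b)$ is exactly the statement that $\bullet$ is the transport of $\circ$ along $\sigma$ (so the two routes agree on what $\bullet$ is), and your substitution of $\sigma^{-1}(a)$ into the truss distributive law \eqref{semi.brace}, using $\sigma(\sigma^{-1}(a))=a$, yields the brace law \eqref{brace.law} in one line. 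What your approach buys is self-containedness and transparency -- it makes visible that the brace law is literally the truss law precomposed with $\sigma^{-1}$ -- at the cost of redoing a verification the paper delegates to the cocycle--brace correspondence of Guarnieri and Vendramin; the paper's route is shorter and places the lemma inside that established framework. All the individual steps you use (Corollary~\ref{cor.sigma.inverse}, the explicit inverse $\sigma^{-1}(a)=a\circ 1_\diamond^\circ$ from the proof of Lemma~\ref{lem.sigma.inverse}, and the equivariance \eqref{equivariant}) are correctly deployed.
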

\begin{proof}
Since all elements of $A$ are units in $(A,\circ)$, so is the identity $1_\diamond$ and the map $\sigma$ is bijective by Lemma~\ref{lem.sigma.inverse}. Hence, by Theorem~\ref{thm.brace}, $\sigma$ is a bijective one-cocycle from $(A,\circ)$ to $(A,\diamond)$. By \cite[Proposition~1.11]{GuaVen:ske}, $A$ is a skew brace with operations $\diamond$ and the induced operation
$$
a\bullet b = \sigma\left(\sigma^{-1}(a)\circ\sigma^{-1}(b)\right) = \sigma^{-1}(a)\circ b = a\circ 1_\diamond^\circ\circ b,
$$
where the first equality follows by \eqref{equivariant} and the second one by the fact that $\sigma^{-1}(a) = a\circ 1_\diamond^\circ$ (compare the proof of Lemma~\ref{lem.sigma.inverse}, where necessarily $u = 1_\diamond^\circ$ in this case). 
\end{proof}

\begin{rem}\label{rem.brace}
Note that $(A,\diamond,\circ,\sigma)$ is not isomorphic with $(A,\diamond, \bullet,\id)$ in the category of trusses. Indeed, no skew brace can be isomorphic to a skew truss with a nontrivial cocycle. Such an isomorphism would need to be a bijective map $f$ satisfying $f = \sigma f$, thus yielding the contradictory equality $\sigma = \id$.

The construction in Lemma~\ref{lem.brace} is categorical in the sense that it gives a pair of functors between the category $\mathcal{T}$ of trusses with bijective cocycles and the category of braces $\mathcal{B}$. More precisely, the functors are identities on morphisms and on objects are given by
$$
F: \Tt\to \Bb, \quad (A,\diamond,\circ,\sigma)\mapsto (A,\diamond, \bullet),  \qquad G: \Bb\to \Tt \quad (A,\diamond,\bullet)\to (A,\diamond, \bullet,\id).
$$
Obviously, $F\circ G =\id_\Bb$, but these functors are not inverse equivalences, because the existence of a functorial isomorphism $G\circ F\cong \id_\Tt$ would require the existence of isomorphisms between trusses and braces, which is ruled out by the preceding discussion. 
\endeg
\end{rem}

\begin{rem}\label{rem.brace.heap} 
In contrast to the first statement of Remark~\ref{rem.brace}, if $1_\diamond$ is central in $(A,\circ)$, then $\sigma$ is not only an isomorphism of groups $(A,\bullet)$ and $(A,\circ)$, but also an isomorphism of heaps associated to $(A,\diamond)$ by Lemma~\ref{lem.heap.mor}. Should the morphisms in category of trusses be defined as simultanous morphisms of heaps and semigroups, as in Remark~\ref{rem.heap.morph}, then $(A,\diamond,\circ)$ and $(A,\diamond,\bullet)$ would be isomorphic as trusses.
\endeg
\end{rem}

Recall from \cite{Dri:uns} that, for a set $A$, a function $r: A\times A\to A\times A$ is a {\em solution to the set-theoretic Yang-Baxter equation} provided that
$$
(r\times \id)(\id \times r)(r\times \id)=(\id \times r)(r\times \id)(\id \times r) .
$$

\begin{cor}\label{cor.Y-B}
Let  $(A,\diamond,\circ,\sigma)$ be  a  skew truss such that
$(A,\circ)$ is a group. Take any $e\in A$ and define  $r: A\times A\to A\times A$ by
\begin{eqnarray}\label{sol.Y-B}
r(a,b) 
\!\!\!&=&\!\!\! \left(e\diamond a^\diamond\diamond (a\circ e^\circ\circ b), e\circ \left(e\diamond a^\diamond\diamond (a\circ e^\circ\circ b)\right)^\circ\circ a\circ e^\circ \circ b\right). 
\end{eqnarray}
The function $r$ is a bijective solution to the set-theoretic Yang-Baxter equation.
\end{cor}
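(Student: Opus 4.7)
The cleanest route is to reduce to the known fact that every skew brace yields a bijective solution of the set-theoretic Yang-Baxter equation \cite{GuaVen:ske}, rather than verifying the YB equation directly for the rather complicated expression~\eqref{sol.Y-B}. The fixed element $e$ appearing in the formula is the hint that we should deform the group operation by $e$ before applying Lemma~\ref{lem.brace}.

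First I would apply Corollary~\ref{cor.family} to the chosen $e$: this yields a new skew truss $(A,\diamond_e,\circ,\sigma_e)$ with $a\diamond_e b = a\diamond e^\diamond \diamond b$ and $\sigma_e(a)=a\circ e$. The group $(A,\diamond_e)$ has identity $e$, the semigroup $\circ$ is unchanged, and by hypothesis $(A,\circ)$ is a group, so Lemma~\ref{lem.brace} applies. It produces a skew brace $(A,\diamond_e,\star_e)$ with
$$
a\star_e b \;=\; a\circ 1_{\diamond_e}^\circ\circ b \;=\; a\circ e^\circ\circ b,
$$
whose $\star_e$-identity is $e$ and whose $\star_e$-inverse is $a^{\star_e}=e\circ a^\circ\circ e$; similarly $a^{\diamond_e}=e\diamond a^\diamond\diamond e$.

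Next I would rewrite the map~\eqref{sol.Y-B} in terms of these brace operations. Setting $v:=e\diamond a^\diamond\diamond(a\circ e^\circ\circ b)$, a telescoping calculation gives
\begin{align*}
a^{\diamond_e}\diamond_e(a\star_e b) &= (e\diamond a^\diamond\diamond e)\diamond e^\diamond\diamond(a\circ e^\circ\circ b) = v,\\
v^{\star_e}\star_e(a\star_e b) &= (e\circ v^\circ\circ e)\circ e^\circ\circ(a\circ e^\circ\circ b) = e\circ v^\circ\circ a\circ e^\circ\circ b.
\end{align*}
Hence $r(a,b) = \bigl(a^{\diamond_e}\diamond_e(a\star_e b),\; v^{\star_e}\star_e(a\star_e b)\bigr)$ is precisely the Guarnieri--Vendramin set-theoretic YB map associated to the skew brace $(A,\diamond_e,\star_e)$.

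Finally, invoking the theorem of Guarnieri and Vendramin \cite{GuaVen:ske}, which asserts that this map is a bijective solution of the set-theoretic Yang-Baxter equation for any skew brace, closes the argument. The main obstacle is purely bookkeeping: expanding the $\diamond_e$- and $\star_e$-inverses cleanly so that the substitution into the generic brace-theoretic YB formula matches~\eqref{sol.Y-B} on the nose; once that identification is made, bijectivity and the YB equation are imported directly, with no further verification needed.
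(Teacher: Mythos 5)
Your proposal is correct and follows essentially the same route as the paper: deform $\diamond$ by $e$ via Corollary~\ref{cor.family}, pass to the skew brace $(A,\diamond_e,\bullet_e)$ via Lemma~\ref{lem.brace}, and identify \eqref{sol.Y-B} with the Guarnieri--Vendramin solution attached to that brace. Your explicit computation of the $\diamond_e$- and $\bullet_e$-inverses matches the paper's (briefer) translation step.
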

\begin{proof}
By Corollary~\ref{cor.family}, $(A,\diamond_e,
\circ,\sigma_e)$, where $a\diamond_e b = a\diamond e^\diamond \diamond b$ and $\sigma_e(a) = a\circ e$, is a skew truss. The identity of $(A,\diamond_e)$ is $e$, and since $(A,\circ)$ is a group, $(A,\diamond_e,\bullet_e)$, where $a\bullet_e b = a\circ e^\circ\circ b$,  is a skew brace by Lemma~\ref{lem.brace}. In view of \cite[Theorem~3.1]{GuaVen:ske} there is an associated bijective solution of the Yang-Baxter equation
$$
r_n: A\times A\to A\times A, \quad (a,b)\mapsto \left(a^{\diamond_e}\diamond_e (a\bullet_e b), \left(a^{\diamond_e}\diamond_e (a\bullet_e b)\right)^{\bullet_e} \bullet_e a\bullet_e b\right).
$$
The solution \eqref{sol.Y-B} is simply the translation of  this formula with the help of \eqref{bullet}. 
\end{proof}

\begin{rem}\label{rem.YB}
In view of Lemma~\ref{lem.heap} and Theorem~\ref{thm.equiv}, Corollary~\ref{cor.Y-B} can be restated in terms of heaps. Let $A$ be equipped with a group operation $\circ$ that distributes over a heap operation $[-,-,-]$ on $A$ as in \eqref{heap.distribute}. Then, for all $e\in A$, the function $r:A\times A\to A\times A$,
\begin{equation}\label{YB-heap}
r(a,b) 
= \left([e,a, a\circ e^\circ\circ b], e\circ \left[e,a,a\circ e^\circ\circ b\right]^\circ\circ a\circ e^\circ \circ b\right), 
\end{equation}
is a bijective solution to the set-theoretic Yang-Baxter equation. 

In fact the solution \eqref{YB-heap} can be written as originating from two heap operations. Let 
$$
\langle a, b, c\rangle := a\circ b^\circ \circ c,
$$
be the heap operation associated to $(A,\circ)$. Then \eqref{YB-heap} reads:
\begin{equation}\label{YB-heap.heap}
r(a,b) 
= \left(\left[e,a, \langle a, e, b\rangle\right],\; \langle \langle e , \left[e,a,\langle a, e, b\rangle\right], a\rangle ,  e, b\rangle\right).
\end{equation}
\endeg
\end{rem}

\section{Trusses and rings}\label{sec.ring}
In this section we show that the fact that to any two-sided brace one can associate a ring \cite{Rum:mod} is not brace-specific\footnote{Admittedly, the fact that the ring associated to a brace is a {\em radical} ring is brace-specific.}, but it is a consequence of the truss distributive laws.
\begin{defn} \label{def.truss.abelian}
A set $A$ together with an abelian group operation $+$ and an associative operation $\circ$ is called a {\em two-sided truss}, provided $(A,+,\circ)$ is both a left and right truss with the same cocycle. 
\end{defn}
We will use additive notation for the group $(A,+)$, denoting the neutral element by 0 and the inverse of $a\in A$  by $-a$, and we adopt the convention that $\circ$ takes precedence over $+$. Thus in a two-sided truss, the operations $\circ$ and $+$ are related by
\begin{equation}\label{2.truss}
a\circ(b+c) = a\circ b +a\circ c -\sigma(a), \qquad (a+b)\circ c = a\circ c + b\circ c - \sigma(c),
\end{equation}
and by Lemma~\ref{lem.coc.prop} (and its right-handed version), the cocycle satisfies
\begin{equation}\label{sigma.ab}
\sigma(a) = a\circ 0 = 0\circ a,
\end{equation}
hence $0$ is central in $(A,\circ)$ and we have Lemma~\ref{lem.sig.add} at our disposal, i.e., for all $a,b\in A$, and $n\in \NN$,
\begin{equation}\label{add.ab}
\sigma^n(a+b) = \sigma^n(a) + \sigma^n(b) - \sigma^n(0) = \sigma^n(a) + \sigma^n(b) - \underbrace{0\circ 0\circ\ldots\circ 0}_{n+1-\text{times}}. 
\end{equation}
In this section we show that there is a ring associated to any two-sided truss.

\begin{thm}\label{thm.brace.ring}
Given a  two-sided truss $(A,+,\circ,\sigma)$, define a binary operation $\bullet$ on $A$ by
\begin{equation}
a\bullet b = a\circ b - \sigma(a+b).
\end{equation}
 Then $(A,+,\bullet)$ is a ring. The assignment $(A,+,\circ,\sigma)\mapsto (A,+,\bullet)$ defines a functor (that acts on morphisms as the identity) from the category of trusses to the category of rings.
\end{thm}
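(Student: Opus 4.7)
The plan is to check the ring axioms for $(A,+,\bullet)$ in turn---distributivity of $\bullet$ over $+$ on both sides, and associativity of $\bullet$---and then to deduce functoriality. Throughout I will use that, by \eqref{sigma.ab}, the element $0$ is central in $(A,\circ)$, so Lemma~\ref{lem.sig.add} gives $\sigma(x+y)=\sigma(x)+\sigma(y)-\sigma(0)$ and, as a consequence, $\sigma(-x)=2\sigma(0)-\sigma(x)$. I will also use the equivariance $\sigma(x\circ y)=\sigma(x)\circ y=x\circ\sigma(y)$, which follows from centrality of $0$ together with associativity of $\circ$.

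For left distributivity, expanding $a\bullet(b+c)=a\circ(b+c)-\sigma(a+b+c)$ by the left truss law in \eqref{2.truss} and comparing with $a\bullet b+a\bullet c$ reduces the equality to the identity $\sigma(a+b+c)+\sigma(a)=\sigma(a+b)+\sigma(a+c)$, which is immediate from iterating Lemma~\ref{lem.sig.add}: both sides evaluate to $2\sigma(a)+\sigma(b)+\sigma(c)-2\sigma(0)$. Right distributivity is verified symmetrically. For associativity, I would expand $(a\bullet b)\bullet c$ and $a\bullet(b\bullet c)$ using the right and the left truss laws respectively. The two useful auxiliary identities $(-x)\circ c=2\sigma(c)-x\circ c$ and $a\circ(-y)=2\sigma(a)-a\circ y$ come from applying the respective truss law to $0=x+(-x)$ and $0=y+(-y)$ together with \eqref{sigma.ab}. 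Pushing everything through these formulas and the additive and equivariance properties of $\sigma$, both sides collapse to the same symmetric expression
\[
a\circ b\circ c-\sigma(a\circ b)-\sigma(a\circ c)-\sigma(b\circ c)+\sigma^2(a)+\sigma^2(b)+\sigma^2(c)-\sigma^2(0),
\]
which proves associativity. Alternatively, writing $a\bullet b=[a\circ b,\sigma(a+b),0]$ and invoking the heap distributivity of $\circ$ (Theorem~\ref{thm.equiv}(5)) together with the fact that $\sigma$ is a heap endomorphism (Lemma~\ref{lem.heap.mor}) organises the same bookkeeping more uniformly.

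Functoriality is then immediate from Proposition~\ref{prop.morph}: any truss morphism $f$ respects $+$, $\circ$ and (automatically) the cocycles, so
\[
f(a\bullet b)=f(a\circ b)-f(\sigma_A(a+b))=f(a)\circ f(b)-\sigma_B(f(a)+f(b))=f(a)\bullet f(b),
\]
hence $f$ is a ring homomorphism; identities and composition are preserved trivially. The main obstacle is the associativity verification: it is the only genuine calculation and, while entirely mechanical, it is the lengthiest part of the argument---the chief difficulty is bookkeeping rather than any conceptual subtlety, with the heap formulation offering the cleanest route.
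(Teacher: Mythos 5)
Your proposal is correct and follows essentially the same route as the paper: the same reduction of left distributivity to the additivity of $\sigma$ from Lemma~\ref{lem.sig.add}, the same direct expansion for associativity landing on the identical canonical expression (the paper writes $\sigma(a\circ b)$ as $a\circ\sigma(b)$ etc.\ via equivariance, and handles negatives through $(-a)\bullet b=-a\bullet b$ rather than your $(-x)\circ c=2\sigma(c)-x\circ c$, but these are the same bookkeeping), and the same functoriality computation. The heap reformulation you mention is only an organisational aside and is not used in the paper's proof.
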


\begin{proof}
We first prove that operation $\bullet$ distributes over $+$. Set $e=\sigma(0)$, take any $a,b,c\in A$, and compute
\begin{eqnarray*}
a\bullet(b+c) \!\!\!&=&\!\!\! a\circ (b+c) - \sigma(a+b+c)\\ 
\!\!\!&=&\!\!\! a\circ b  -\sigma(a) + a\circ c - \sigma(a+b+c) \\
 \!\!\!&=&\!\!\! a\circ b + a\circ c -\sigma(a+a+b+c) + e,
 \end{eqnarray*}
 where we used the definition of $\bullet$, then the relation \eqref{sigma.ab}  and finally \eqref{add.ab}. On the other hand
 \begin{eqnarray*}
 a\bullet b+ a\bullet c \!\!\!&=&\!\!\! a\circ b - \sigma(a+b) + a\circ c  - \sigma(a+c)\\
 \!\!\!&=&\!\!\!  a\circ b + a\circ c -\sigma(a+a+b+c) + e,
  \end{eqnarray*}
 where again \eqref{add.ab} has been applied. This proves the left-distributivity. The right-distributivity follows by the left-right symmetry. 
 
In view of the fact that $\sigma(a) = 0\circ a = a\circ 0$, the distributivity of $\bullet$ over addition implies that, for all $a,b\in A$,
\begin{equation}\label{bul.-a}
(-a)\bullet b = a\bullet(-b) = -a\bullet b.
\end{equation}
With this at hand we compute, for all $a,b,c\in A$,
\begin{eqnarray*}
a\bullet(b\bullet c) \!\!\!&=&\!\!\! a\bullet\left(b\circ c-\sigma(b+c)\right) = a\bullet(b\circ c) - a\bullet\sigma(b+c)\\
\!\!\!&=&\!\!\! a\circ b\circ c -\sigma(a+b\circ c)-a\circ\sigma(b+c) +\sigma(a+\sigma(b+c))\\
\!\!\!&=&\!\!\! a\circ b\circ c - b\circ \sigma(c)-\sigma(a)\circ(b+c) +\sigma^2(b+c)\\
\!\!\!&=&\!\!\! a\circ b\circ c - b\circ \sigma(c)-a\circ  \sigma(b) -a\circ  \sigma(c) \\
&&+\sigma^2(a) + \sigma^2(b)+\sigma^2(c) -\sigma(e),
\end{eqnarray*}
where the  first and third equalities follow by the definition  of $\bullet$, the second equality is a consequence of \eqref{bul.-a} and the distributivity of $\bullet$, the fourth one follows by \eqref{add.ab} (with $n=1$) and the two-sided version of \eqref{equivariant}. The final equality is a consequence of \eqref{2.truss} and \eqref{add.ab} (with $n=2$). On the other hand, using the same chain of arguments we obtain
\begin{eqnarray*}
(a\bullet b)\bullet c \!\!\!&=&\!\!\! \left(a\circ b-\sigma(a+b)\right)\bullet c = (a\circ b)\bullet c - \sigma(a+b)\bullet c\\
\!\!\!&=&\!\!\! a\circ b\circ c -\sigma(a\circ b + c)-\sigma(a+b)\circ c +\sigma(\sigma(a+b)+c)\\
\!\!\!&=&\!\!\! a\circ b\circ c - a\circ \sigma(b)-(a+b)\circ \sigma(c)+\sigma^2(a+b)\\
\!\!\!&=&\!\!\! a\circ b\circ c - b\circ \sigma(c)-a\circ  \sigma(b) -a\circ  \sigma(c) \\
&&+\sigma^2(a) + \sigma^2(b)+\sigma^2(c) -\sigma(e).
\end{eqnarray*}
Therefore the operation $\bullet$ satisfies the associative law, and thus $(A,+,\bullet)$ is an associative ring as claimed.

If $f: A\to B$ is a morphism from $(A,+,\circ,\sigma_A)$ to $(B,+,\circ,\sigma_B)$, then for all $a,a'\in A$,
\begin{eqnarray*}
f(a\bullet a') \!\!\!&=&\!\!\! f\left(a\circ a' - \sigma_A(a+a')\right) \\
\!\!\!&=&\!\!\! f(a)\circ f(a') - f\left(\sigma_A(a+a')\right)\\
\!\!\!&=&\!\!\! f(a)\circ f(a') - \sigma_B\left(f\left(a+a'\right)\right)\\
 \!\!\!&=&\!\!\!  f(a)\circ f(a') - \sigma_B\left(f\left(a\right)+f\left(a'\right)\right) = f(a)\bullet f(a'),
\end{eqnarray*}
where the second and the fourth equalities follow by the fact that $f$ is an additive map preserving $\circ$, and the third equality follows by \eqref{mor.diag.sig}. Thus the process of turning a two-sided truss into a ring defines a functor as stated.
\end{proof}

\begin{cor}\label{cor.ring.e}
Given a  two-sided truss $(A,+,\circ,\sigma)$, for any $e$ in the centre of $(A,\circ)$, define  binary operations  $+_e$ and  $\bullet_e$ on $A$ by
\begin{equation}\label{+e}
a+_eb = a+b-e,
\end{equation}
\begin{equation}\label{bullete}
a\bullet_e b = a\circ b - a\circ e - b\circ e + e\circ e +e +e .
\end{equation}
 Then $(A,+_e,\bullet_e)$ is a ring. 
 \end{cor}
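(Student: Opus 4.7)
The plan is to derive Corollary~\ref{cor.ring.e} as a direct consequence of Corollary~\ref{cor.family} and Theorem~\ref{thm.brace.ring}, with some bookkeeping to unwind the resulting ring operation.

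First, I would apply Corollary~\ref{cor.family} to the underlying left truss $(A,+,\circ,\sigma)$ and the element $e$: the result is a skew left truss $(A,+_e,\circ,\sigma_e)$, where $a +_e b = a + b - e$ and $\sigma_e(a) = a\circ e$. The group $(A,+_e)$ is abelian (since $(A,+)$ is), with neutral element $e$, additive inverse $a \mapsto 2e-a$, and derived subtraction $x -_e y = x - y + e$.

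Second, I would verify that $(A,+_e,\circ,\sigma_e)$ is in fact two-sided. This amounts to checking, for all $a,b,c\in A$,
\[
(a +_e b)\circ c = (a\circ c) +_e (2e - \sigma_e(c)) +_e (b\circ c).
\]
Using the right truss distributive law \eqref{distribute.r} of the original truss, together with the auxiliary identity $(-e)\circ c = 2\sigma(c) - e\circ c$ (obtained by applying \eqref{distribute.r} to the relation $0 = e+(-e)$), the left-hand side reduces to $a\circ c + b\circ c - e\circ c$. Unwinding the right-hand side via the definitions of $+_e$ and $-_e$, and using the centrality of $e$ in $(A,\circ)$ to rewrite $\sigma_e(c) = e\circ c$, yields the same expression.

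Third, Theorem~\ref{thm.brace.ring} applied to the two-sided truss $(A,+_e,\circ,\sigma_e)$ furnishes an associative ring $(A,+_e,\bullet_e)$ with multiplication $a\bullet_e b = (a\circ b) -_e \sigma_e(a+_e b)$. To recover the closed-form expression \eqref{bullete} in the language of the original operations, I would simplify $\sigma_e(a+_e b) = (a+b-e)\circ e$ by the same technique used in step two --- applying \eqref{distribute.r} twice and invoking $(-e)\circ e = 2\sigma(e) - e\circ e$ --- to obtain $\sigma_e(a+_e b) = a\circ e + b\circ e - e\circ e$, and then expand $-_e$ in terms of the original subtraction and the shift by $e$.

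The main obstacle I expect is the bookkeeping in steps two and four: the cocycle corrections coming from the deformed group law interact nontrivially with those from $\circ$, and it is only through the centrality of $e$ in $(A,\circ)$ and the auxiliary relations for $(-e)\circ c$ that the many $\sigma$-terms collapse into the tidy expressions in \eqref{+e} and \eqref{bullete}. No further verification of associativity or distributivity of $\bullet_e$ is needed, since these are automatic from Theorem~\ref{thm.brace.ring}.
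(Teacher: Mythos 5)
Your proposal follows the same route as the paper's proof: deform the addition via Corollary~\ref{cor.family} (together with its right-handed version), use the centrality of $e$ to see that the left and right cocycles of the deformed truss coincide so that $(A,+_e,\circ,\sigma_e)$ is two-sided, then apply Theorem~\ref{thm.brace.ring} and unwind the product. Your explicit verification of the right truss distributive law in step two is more detailed than the paper, which simply invokes the right-handed version of Corollary~\ref{cor.family}; both are fine. There is, however, one place where your (correct) bookkeeping diverges from the printed statement. With neutral element $e$ and inverse $y^{+_e}=2e-y$, the derived subtraction is $x-_ey=x+_ey^{+_e}=x-y+e$, exactly as you say; combining this with $\sigma_e(a+_eb)=(a+b-e)\circ e=a\circ e+b\circ e-e\circ e$ gives
$$
a\bullet_e b \;=\; a\circ b - a\circ e - b\circ e + e\circ e + e,
$$
with a single $+e$, not the $+e+e$ of \eqref{bullete}. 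The paper's own computation passes from $a\circ b-_e(a+_eb)\circ e$ to $a\circ b-(a+b-e)\circ e+e+e$, which amounts to using $x-_ey=x-y+2e$; that is an error. (A quick check: in $(\ZZ,+,\cdot)$ with $\sigma=0$ the two-$e$ formula fails distributivity over $+_e$, while the one-$e$ formula is precisely the transport of multiplication along $a\mapsto a+e$ and does give a ring.) So carry out your plan exactly as written; it is sound, but it proves the corollary with \eqref{bullete} corrected to end in a single $+e$ rather than reproducing the printed formula.
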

 \begin{proof}
 By Corollary~\ref{cor.family} (and its right-handed version), associated to a central element $e$ of $(A,\circ)$ and a two-sided truss $(A,+,\circ,\sigma)$ there is a two-sided truss $(A,+_e,\circ)$, where $+_e$ is given by \eqref{+e}, with cocycle $\sigma_e(a) = a\circ e = e\circ a$ (here the centrality of $e$ ensures that the cycles for the left and right trusses coincide).
  Thus, by Theorem~\ref{thm.brace.ring}, $(A,+_e,\bullet_e)$ is a ring, where
\begin{align*}
 a\bullet_e b &=  a\circ b -_e (a+_e b)\circ e\\
&= a\circ b - \left(a+b-e\right)\circ e + e +e\\
&= a\circ b - \left(a+b\right)\circ e -\sigma\left(e\right) + e\circ e + e +e\\
&=a\circ b - a\circ e - b\circ e + e\circ e +e +e,
\end{align*}
where the second equality follows by the definition of the inverse for the operation $+_e$ in Lemma~\ref{lem.heap} and Corollary~\ref{cor.family},  and by \eqref{+e}. The third and fourth equalities follow by Lemma~\ref{lem.diamond.inv} and the truss distributive law.
 \end{proof}

 \section{Hopf trusses}\label{sec.Hopf}
 In this section we follow \cite{AngGal:Hop} and linearise the notion of a skew truss to obtain its Hopf algebra version.
 
 \begin{defn}\label{def.truss.Hopf}
Let  $(A,\Delta,\eps)$ be a coalgebra over a field $\KK$. Let $\diamond$ be a binary operation on $A$ making $(A,\Delta,\eps)$ a Hopf algebra (with identity denoted by $1_\diamond$ and  the antipode $S$). Let $\circ$ be a binary operation on $A$ making $(A,\Delta,\eps)$ a non-unital bialgebra (i.e.\ $\circ: A\otimes A\to A$ is a morphism of coalgebras satisfying the associative law). We say that $A$ is a {\em left Hopf truss} if there exists a coalgebra endomorphism $\sigma$ of $(A,\Delta,\eps)$, such that
\begin{equation}\label{brace.Hopf}
a\circ(b\diamond c) = \sum (a\sw 1\circ b)\diamond S\sigma(a\sw 2)\diamond (a\sw 3\circ c),
\end{equation}
 for all $a,b,c\in A$.

A {\em right Hopf truss} is defined in a symmetric way. The map $\sigma$ in \eqref{brace.Hopf} is called a {\em cocycle}.
\end{defn}

A Hopf truss is a {\em Hopf brace} in the sense of \cite{AngGal:Hop} provided $(A,\Delta,\circ)$ is a Hopf algebra and the cocycle $\sigma$ is the identity.

As was the case for skew trusses, whatever can be said of a left Hopf truss can be also said (with obvious side variation) about a right Hopf truss, and hence we concentrate on the former and omit the adjective `left'. 

In
a Hopf  truss the cocyle is fully determined by the neutral element of the Hopf algebra and by the operation $\circ$.
\begin{lem}\label{lem.coc.prop.Hopf}
Let $(A,\diamond,\circ,\sigma)$ be a Hopf truss. Then, for all $a\in A$,
\begin{equation}\label{sigma.Hopf}
\sigma(a)= a\circ 1_\diamond.
\end{equation}
Consequently, 
the cocycle $\sigma$ is left $(A,\circ)$-linear, i.e.\ the equality \eqref{equivariant} holds.
\end{lem}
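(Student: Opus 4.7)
The plan is to mimic the proof of Lemma~\ref{lem.coc.prop} in the Hopf setting, with the observation that the role of group-theoretic cancellation in $(A,\diamond)$ is played, in the Hopf case, by convolution invertibility in the algebra $(\mathrm{End}_\KK(A),\ast)$, where $\ast$ is the convolution product with respect to the Hopf algebra structure $(A,\diamond, \Delta, \eps, S)$ and unit $\eta_\diamond\epsilon$.

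First I would introduce $\tau\in\mathrm{End}_\KK(A)$ defined by $\tau(a):=a\circ 1_\diamond$ and note that $\tau$ is a coalgebra endomorphism. This uses only that $\circ\colon A\otimes A\to A$ is a coalgebra morphism and that the Hopf algebra unit $1_\diamond$ is group-like, so that
$$
\Delta(\tau(a)) = \sum (a\sw 1 \circ 1_\diamond)\otimes (a\sw 2 \circ 1_\diamond) = \sum \tau(a\sw 1)\otimes \tau(a\sw 2),
$$
and $\eps\tau=\eps$. The goal of this first part is to show $\tau=\sigma$.

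Next I would substitute $b=c=1_\diamond$ into the Hopf truss distributive law~\eqref{brace.Hopf}; this gives the identity
$$
\tau(a)\;=\;\sum \tau(a\sw 1)\diamond S\sigma(a\sw 2)\diamond \tau(a\sw 3),
$$
which in convolution notation reads $\tau = \tau\ast S\sigma\ast\tau$. Now the key observation is that for any coalgebra endomorphism $\varphi$ of a Hopf algebra, $S\varphi$ is the two-sided convolution inverse of $\varphi$: indeed, using that $\varphi$ is a coalgebra map,
$$
(\varphi\ast S\varphi)(a)=\sum \varphi(a\sw 1)\diamond S\varphi(a\sw 2) = \sum \varphi(a)\sw 1\diamond S(\varphi(a)\sw 2) = \eps(a) 1_\diamond,
$$
and similarly for $S\varphi\ast\varphi$. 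Applying this to $\tau$ (which is a coalgebra endomorphism by the previous step) and convolving $\tau=\tau\ast S\sigma\ast\tau$ on the right by $S\tau$ yields $\eta_\diamond\eps = \tau\ast S\sigma$. Convolving once more on the right by $\sigma$ (and using $S\sigma\ast\sigma=\eta_\diamond\eps$) gives $\sigma=\tau$, which is the desired identity \eqref{sigma.Hopf}.

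The second assertion, equivariance \eqref{equivariant}, is then immediate from \eqref{sigma.Hopf} together with the associativity of $\circ$: $\sigma(a\circ b)=(a\circ b)\circ 1_\diamond = a\circ(b\circ 1_\diamond) = a\circ\sigma(b)$. The only delicate step is the convolution-invertibility argument replacing cancellation; once one recognises that coalgebra endomorphisms automatically invert under convolution via $S$, the proof is essentially the same as in the set-theoretic Lemma~\ref{lem.coc.prop}.
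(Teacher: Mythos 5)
Your proof is correct and follows essentially the same route as the paper: substitute $b=c=1_\diamond$ into \eqref{brace.Hopf}, observe that $\tau(a)=a\circ 1_\diamond$ is a coalgebra endomorphism (hence convolution-invertible via $S$), and cancel to conclude $\tau=\sigma$. The paper performs the cancellation in one step by convolving with $S\tau\ast\sigma$ and then ``developing both sides,'' whereas you make the convolution-algebra bookkeeping explicit; the content is identical, and the second assertion follows from associativity of $\circ$ exactly as in both versions.
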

\begin{proof}
Let $\tau(a) = a\circ 1_\diamond$. 
Setting $b=c=1_\diamond$ in the law \eqref{brace.Hopf} we obtain
$$
 a\circ 1_\diamond = a\circ(1_\diamond\diamond1_\diamond) = \sum (a\sw 1\circ 1_\diamond )\diamond S\sigma(a\sw 2) \diamond (a\sw 3\circ 1_\diamond) ,
$$ 
i.e.,
$$
\tau(a) = \sum \tau(a\sw 1)\diamond S\sigma(a\sw 2) \diamond \tau(a\sw 3).
$$
This implies that
\begin{align}\label{long}
\sum \tau(a\sw 1)\diamond & S\tau(a\sw 2) \diamond \sigma(a\sw 3)\nonumber \\ &= \sum
\tau(a\sw 1)\diamond S\sigma(a\sw 2) \diamond  \tau(a\sw 3)\diamond  S\tau(a\sw 4) \diamond  \sigma(a\sw 5).
\end{align}
By the definition $\sigma$ is a coalgebra map, i.e., for all $a\in A$,
\begin{equation}\label{sig.endo}
\sum \sigma(a)\sw 1\ot \sigma(a)\sw 2 = \sum \sigma(a\sw 1)\ot \sigma(a\sw 2), \qquad \eps(\sigma(a)) = \eps(a).
\end{equation}
Since $(A,\circ)$ is a bialgebra and $1_\diamond$ is a grouplike element, also $\tau$ is a coalgebra endomorphism. Using these facts as well as the properties of the antipode we can develop both sides of \eqref{long} to find that $\tau(a) = \sigma(a)$, i.e.\ that \eqref{sigma.Hopf} holds.

The second statement follows by the associativity of $\circ$ and \eqref{sigma.Hopf}.
\end{proof}

\begin{lem}\label{lem.diamond.antipode}
Let $(A,\diamond,\circ,\sigma)$ be a Hopf truss.  Then, for all $a,b,c\in A$,
\begin{align}\label{Sb.c}
a\circ (S(b)\diamond c) &= \sum \sigma(a\sw 1)\diamond S(a\sw 2\circ b)\diamond (a\sw 3\circ c),
\end{align}
and
\begin{align}\label{b.Sc}
a\circ (b\diamond S(c)) &=  \sum (a\sw 1\circ b) \diamond S(a\sw 2\circ c) \diamond\sigma(a\sw 3). 
\end{align}
\end{lem}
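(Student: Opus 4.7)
The two identities are closely related but not strictly symmetric; I prove \eqref{Sb.c} via a convolution-algebra argument and then deduce \eqref{b.Sc} from a specialisation of it. Starting from the antipode identity $\sum b\sw 1\diamond S(b\sw 2) = \eps(b)1_\diamond$ in the Hopf algebra $(A,\diamond)$ and applying $\circ$ gives $\sum a\circ(b\sw 1\diamond S(b\sw 2)\diamond c) = \eps(b)(a\circ c)$; parsing the left-hand side through \eqref{brace.Hopf} with $b\sw 1$ as the first argument and $S(b\sw 2)\diamond c$ as the second yields
\begin{equation*}
\sum (a\sw 1\circ b\sw 1)\diamond S\sigma(a\sw 2)\diamond \bigl(a\sw 3\circ(S(b\sw 2)\diamond c)\bigr) = \eps(b)(a\circ c).
\end{equation*}
Fixing $c$ and viewing this in the convolution algebra $\mathrm{Hom}_\KK(A\otimes A, A)$ (source comultiplication $\Delta_{A\otimes A}$, target product $\diamond$), it reads $\eta * L = \iota$, where $\eta(x,y) := \sum (x\sw 1\circ y)\diamond S\sigma(x\sw 2)$, $L(x,y) := x\circ(S(y)\diamond c)$, and $\iota(x,y) := \eps(y)(x\circ c)$.

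I next verify that the right-hand side of \eqref{Sb.c}, namely $R(x,y) := \sum \sigma(x\sw 1)\diamond S(x\sw 2\circ y)\diamond (x\sw 3\circ c)$, satisfies $\eta * R = \iota$ as well. Expanding by coassociativity into a five-fold Sweedler expansion of $a$,
\begin{equation*}
(\eta * R)(a,b) = \sum (a\sw 1\circ b\sw 1)\diamond S\sigma(a\sw 2)\diamond \sigma(a\sw 3)\diamond S(a\sw 4\circ b\sw 2)\diamond (a\sw 5\circ c),
\end{equation*}
two antipode collapses finish the check: $\sigma$ being a coalgebra endomorphism gives $\sum S\sigma(a\sw 2)\diamond \sigma(a\sw 3) = \eps(a\sw 2)1_\diamond$, and $\circ$ being a coalgebra morphism gives $\sum (a\sw 1\circ b\sw 1)\diamond S(a\sw 2\circ b\sw 2) = \eps(a)\eps(b)1_\diamond$. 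Together these reduce $(\eta*R)(a,b)$ to $\eps(b)(a\circ c) = \iota(a,b)$.

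Since $\eta = \Phi * \tilde\sigma^{-1}$, with $\Phi(x,y) := x\circ y$ and $\tilde\sigma(x,y) := \sigma(x)\eps(y)$ both coalgebra morphisms into the Hopf algebra $(A,\diamond)$, each is convolution-invertible (with inverses $S\circ\Phi$ and $S\circ\tilde\sigma$); by associativity of convolution, $\eta$ is therefore invertible, with two-sided inverse $\tilde\sigma * (S\circ\Phi)$ whose evaluation is $\sum \sigma(x\sw 1)\diamond S(x\sw 2\circ y)$. Hence $\eta * L = \eta * R$ forces $L = R$, proving \eqref{Sb.c}. For \eqref{b.Sc}, setting $c = 1_\diamond$ in \eqref{Sb.c} and invoking Lemma~\ref{lem.coc.prop.Hopf} produces the auxiliary identity $a\circ S(b) = \sum \sigma(a\sw 1)\diamond S(a\sw 2\circ b)\diamond \sigma(a\sw 3)$; applying \eqref{brace.Hopf} to $a\circ(b\diamond S(c))$, substituting this auxiliary identity for the factor $a\sw 3\circ S(c)$ (using coassociativity to produce the usual five-fold Sweedler expansion of $a$), and collapsing the resulting middle factor $\sum S\sigma(a\sw i)\diamond \sigma(a\sw{i+1}) = \eps(a\sw i)1_\diamond$ yields \eqref{b.Sc}. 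The principal delicacy is Sweedler-index bookkeeping across coassociativity shifts when composing with the auxiliary identity.
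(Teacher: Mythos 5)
Your proof is correct. The computational core is the same as the paper's: the paper derives the tensor identity $\sum \sigma(a\sw 1)\ot (a\sw 2\circ b)\ot(a\sw 3\circ c)=\sum \sigma(a\sw 1)\ot (a\sw 2\circ b\sw 1)\ot\bigl((a\sw 3\circ b\sw 2)\diamond S\sigma(a\sw 4)\diamond(a\sw 5\circ(S(b\sw 3)\diamond c))\bigr)$ by inserting $b\sw 2\diamond S(b\sw 3)$ and invoking \eqref{brace.Hopf}, then applies the sandwich map $\diamond(\diamond\ot\id)(\id\ot S\ot\id)$ to both sides; the cancellations that make this work ($S\sigma(a\sw i)\diamond\sigma(a\sw{i+1})$ and $(a\sw i\circ b\sw j)\diamond S(a\sw{i+1}\circ b\sw{j+1})$ collapsing to counits, legitimate because $\sigma$ and $\circ$ are coalgebra morphisms) are exactly your "two antipode collapses." What you do differently is the logical packaging: you exhibit both sides of \eqref{Sb.c} as solutions of $\eta * X=\iota$ in the convolution algebra $\mathrm{Hom}_\KK(A\ot A,A)$ and cancel the convolution-invertible $\eta=\Phi*\tilde\sigma^{-1}$, which makes the uniqueness mechanism explicit and identifies $\eta^{-1}=\tilde\sigma*(S\Phi)$ as the operator producing the right-hand side of \eqref{Sb.c} from $\Phi$. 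You also genuinely diverge on \eqref{b.Sc}: the paper proves it "in a similar way" (a second symmetric computation), whereas you derive it from \eqref{Sb.c} by specialising $c=1_\diamond$ to get $a\circ S(b)=\sum\sigma(a\sw 1)\diamond S(a\sw 2\circ b)\diamond\sigma(a\sw 3)$ and re-substituting into \eqref{brace.Hopf}; this is a clean shortcut that avoids repeating the argument. Both routes are sound; yours costs a little more setup but buys a structural explanation (convolution invertibility) and halves the work on the second identity.
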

\begin{proof}
Let us take any $a,b,c\in A$ and compute
\begin{align*}
\sum \sigma(a\sw 1) &\ot a\sw 2\circ b \ot a\sw 3\circ c = \sum \sigma(a\sw 1)\ot a\sw 2\circ b\sw 1 \ot a\sw 3\circ \left(b\sw 2\diamond S(b\sw 3)\diamond c\right)\\
&= \sum \sigma(a\sw 1)\ot a\sw 2\circ b\sw 1 \ot (a\sw 3\circ b\sw 2)\diamond S\sigma(a\sw 4)\diamond \left (a\sw 5\circ \left(S(b\sw 3)\diamond c\right)\right),
\end{align*}
where the first equality follows by properties of the antipode and the counit and the second one by \eqref{brace.Hopf}. Equation \eqref{Sb.c} follows by applying $\diamond(\diamond\ot \id)(\id\ot S\ot \id)$ to both sides of the equality just derived and by using that $(A,\diamond)$ is a Hopf algebra and $(A,\circ)$ is a bialgebra, both with the same comultiplication and counit, and that $\sigma$ is a coalgebra homomorphism. Equation \eqref{b.Sc} is proven in a similar way.
\end{proof}

Also as in the skew truss case, the distributive law \eqref{brace.Hopf} can be stated in a number of equivalent ways.

\begin{thm}\label{thm.equiv.Hopf}
Let $(A,\Delta,\eps)$ be a coalgebra with two bilinear operations $\diamond$ and $\circ$ making $A$ into a Hopf algebra and a bialgebra respectively. Then the following statements are equivalent:
\begin{zlist}
\item There exists a linear endomorphism $\sigma: A\to A$ such that $(A,\diamond,\circ,\sigma)$ is a Hopf truss.
\item There exists a linear map 
$\lambda : A\ot A \to A$, 
such that, for all $a,b,c\in A$,
\begin{equation}\label{distribute.Hopf}
a\circ (b\diamond c) = \sum (a\sw 1\circ b)\diamond \lambda (a\sw 2\ot c).
\end{equation}
\item There exists a linear map
$
\mu :  A\ot A \to A$, 
such that, for all $a,b,c\in A$,
\begin{equation}\label{distribute.mu.Hopf}
a\circ (b\diamond c) = \sum \mu (a\sw 1\ot b)\diamond (a\sw 2\circ c).
\end{equation}
\item There exist linear maps
$
\kappa,\hat\kappa :  A\ot A \to A$, at least one of which is a coalgebra morphism and
such that, for all $a,b,c\in A$,
\begin{equation}\label{distribute.kap.Hopf}
a\circ (b\diamond c) = \sum \kappa (a\sw 1 \ot b)\diamond \hat\kappa (a\sw 2\ot c).
\end{equation}
\item Define a linear map
$[-,-,-]: A\ot A\ot A\to A$, by $[a,b,c]= a\diamond S(b)\diamond c.$ Then, for all $a,b,c,d\in A$,
\begin{equation}\label{distribute.heap.Hopf}
a\circ \left[b,c,d\right] = \sum \left[a\sw 1\circ b, a\sw 2\circ c, a\sw 3\circ d\right].
\end{equation}
\end{zlist}
\end{thm}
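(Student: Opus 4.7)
The strategy is a direct upgrade of the proof of Theorem~\ref{thm.equiv} to the Hopf-algebraic setting, with the counit and the fact that $1_\diamond$ is a grouplike element playing the role of the set-theoretic ``evaluation at $1_\diamond$''. The implications (1)$\Rightarrow$(2),(3),(4) are immediate by setting, respectively, $\lambda(a\ot c)=\sum S\sigma(a\sw 1)\diamond(a\sw 2\circ c)$, $\mu(a\ot b)=\sum(a\sw 1\circ b)\diamond S\sigma(a\sw 2)$, and, for (4), $\kappa=\circ$ (which is a coalgebra morphism by the assumption that $(A,\Delta,\circ)$ is a bialgebra) together with $\hat\kappa=\lambda$ as above.

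For (2)$\Rightarrow$(1) I would define $\sigma(a):=a\circ 1_\diamond$. Because $\circ:A\ot A\to A$ is a coalgebra map and $1_\diamond$ is grouplike, a short check shows that $\sigma$ is automatically a coalgebra endomorphism: $\Delta\sigma(a)=\sum(a\sw 1\circ 1_\diamond)\ot(a\sw 2\circ 1_\diamond)$ and $\eps\sigma(a)=\eps(a)\eps(1_\diamond)=\eps(a)$. Setting $b=1_\diamond$ in \eqref{distribute.Hopf} then gives $a\circ c=\sum\sigma(a\sw 1)\diamond\lambda(a\sw 2\ot c)$, so $\lambda(a\ot c)=\sum S\sigma(a\sw 1)\diamond(a\sw 2\circ c)$; substituting back into \eqref{distribute.Hopf} recovers \eqref{brace.Hopf}. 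The implication (3)$\Rightarrow$(1) is identical with sides swapped. For (4)$\Rightarrow$(2) (say $\hat\kappa$ is the coalgebra map; the other case is symmetric), put $c=1_\diamond$ in \eqref{distribute.kap.Hopf}, let $\tau(a):=\hat\kappa(a\ot 1_\diamond)$, which is a coalgebra endomorphism precisely because $\hat\kappa$ is a coalgebra map and $1_\diamond$ is grouplike, solve for $\kappa(a\ot b)=\sum(a\sw 1\circ b)\diamond S\tau(a\sw 2)$, and re-insert into \eqref{distribute.kap.Hopf}; this yields \eqref{distribute.Hopf} with $\lambda(a\ot c)=\sum S\tau(a\sw 1)\diamond\hat\kappa(a\sw 2\ot c)$.

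The equivalence of (1) and (5) is the most computational step but follows cleanly from Lemma~\ref{lem.diamond.antipode}. For (1)$\Rightarrow$(5), expanding $a\circ[b,c,d]=a\circ(b\diamond S(c)\diamond d)$ by \eqref{brace.Hopf} and then applying \eqref{Sb.c} to the inner factor produces
\[
\sum (a\sw 1\circ b)\diamond S\sigma(a\sw 2)\diamond\sigma(a\sw 3)\diamond S(a\sw 4\circ c)\diamond(a\sw 5\circ d).
\]
The decisive cancellation is $\sum S\sigma(a\sw 2)\diamond\sigma(a\sw 3)=\eps(a\sw 2)\,1_\diamond$, which follows from the coalgebra endomorphism property \eqref{sig.endo} applied to $a\sw 2$ together with the antipode axiom; this collapses the expression to $\sum[a\sw 1\circ b,\,a\sw 2\circ c,\,a\sw 3\circ d]$. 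Conversely, setting $c=1_\diamond$ in \eqref{distribute.heap.Hopf} and using $S(1_\diamond)=1_\diamond$ and $a\sw 2\circ 1_\diamond=\sigma(a\sw 2)$ (with $\sigma$ defined as above and shown to be a coalgebra endomorphism) recovers exactly \eqref{brace.Hopf}.

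The main technical obstacle I anticipate is the Sweedler-index book-keeping in the (1)$\Rightarrow$(5) direction, where one has to re-slice $\Delta^{(4)}(a)$ so that the consecutive legs $\sigma(a\sw 2)$ and $\sigma(a\sw 3)$ are recognised as the two tensorands of $\Delta\sigma(a\sw 2)$ before the antipode identity can be applied. Everything else is essentially formal: at each step the set-theoretic manipulation ``set the free variable to $1_\diamond$'' is replaced by its Hopf counterpart, and the required endomorphism-of-coalgebras property of the candidate $\sigma$ (or $\tau$) is guaranteed by the bialgebra hypothesis on $(A,\circ)$ in cases (2), (3), (5), and by the explicit coalgebra-map hypothesis in case (4).
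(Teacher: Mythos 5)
Your proposal is correct and follows essentially the same route as the paper: the same explicit formulas for $\lambda$, $\mu$, $\kappa$, $\hat\kappa$ in (1)$\Rightarrow$(2),(3),(4), the same recovery of $\sigma(a)=a\circ 1_\diamond$ as a coalgebra endomorphism via the grouplike $1_\diamond$ in the converse directions, the same use of the coalgebra-map hypothesis on $\hat\kappa$ (or $\kappa$) to define $\tau$ in (4)$\Rightarrow$(2) (or (3)), and the same combination of \eqref{brace.Hopf} with Lemma~\ref{lem.diamond.antipode} plus the cancellation $\sum S\sigma(a\sw 1)\diamond\sigma(a\sw 2)=\eps(a)1_\diamond$ for (1)$\Leftrightarrow$(5). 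No gaps.
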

\begin{proof}
(1)$\implies$(2), (3) and (4).  Given $\sigma$, define, for all $a,b\in A$, 
\begin{subequations}
\begin{equation}\label{lam.sig.Hopf}
\lambda (a\ot b) = \sum S\sigma(a\sw 1)\diamond (a\sw 2\circ b),
\end{equation}
\begin{equation}\label{Hopf.act.mu}
 \mu(a\ot b) = \sum (a\sw 1\circ b) \diamond S\sigma(a\sw 2),
\end{equation}
\begin{equation}\label{Hopf.act.kap}
\kappa(a\ot b) = a\circ b, \qquad \hat\kappa(a\ot b) = \sum S\sigma(a\sw 1)\diamond (a\sw 2\circ b).
\end{equation}
\end{subequations}
The coassociativity of $\Delta$ ensures that the satisfaction of \eqref{brace.Hopf} implies the satisfaction of \eqref{distribute.Hopf}, \eqref{distribute.mu.Hopf} and \eqref{distribute.kap.Hopf}. In the case of assertion (4), since the operation $\circ$ is a coalgebra morphism, so is $\kappa$ in \eqref{Hopf.act.kap}.

(2)$\implies$(1) Define a linear endomorphism $\sigma: A\to A$ by $\sigma(a) = a\circ 1_\diamond$. Setting $b = 1_\diamond$ in  \eqref{distribute.Hopf}, we obtain that $a\circ c = \sum \sigma(a\sw 1) \diamond \lambda (a\sw 2\ot c)$. Since $1_\diamond$ is a grouplike element and $\circ$ is a coalgebra map, the map $\sigma$ is a coalgebra map too, i.e.\ it satisfies \eqref{sig.endo}. Using the properties of the antipode and the coalgebra map property of $\sigma$, \eqref{distribute.Hopf} can be developed as
\begin{align*}
a\circ (b\diamond c) &= \sum (a\sw 1\circ b)\diamond S\sigma(a\sw 2)\diamond\sigma(a\sw 3)\diamond  \lambda (a\sw 4\ot c)\\
&= \sum (a\sw 1\circ b)\diamond S\sigma(a\sw 2)\diamond (a\sw 3\circ c),
\end{align*}
as required.
The implication (3)$\implies$(1) is proven in a similar way.

(4)$\implies$(2) or (3) Suppose that $\hat\kappa$ is a coalgebra morphism and define $\tau: A\to A$ by $\tau(a) = \hat\kappa(a\ot 1_\diamond)$.  Setting $c=1_\diamond$ in \eqref{distribute.kap.Hopf} we obtain 
$$
a\circ b =  \sum \kappa (a\sw 1\ot b)\diamond \tau(a\sw 2) .
$$
Since $1_\diamond$ is a grouplike element $\tau$ is a coalgebra endomorphism. With this at hand, \eqref{distribute.kap.Hopf} can be rewritten as
\begin{align*}
a\circ (b\diamond c) &= \sum \kappa (a\sw 1\ot b)\diamond \tau(a\sw 2)\diamond S\tau(a\sw 3) \diamond \hat\kappa (a\sw 4\ot c)\\
& = \sum (a\sw 1\circ b)\diamond S\tau(a\sw 2) \diamond \hat\kappa(a\sw 3\ot c).
\end{align*}
Therefore, \eqref{distribute.kap.Hopf} takes the form of \eqref{distribute.Hopf} (with $\lambda (a\ot c) = \sum S\tau(a\sw 1) \diamond \hat\kappa(a\sw 2\ot c)$), i.e.\ statement (4) implies statement (2). 

If $\kappa$ is a coalgebra morphism, then we define a coalgebra endomorphism of $A$ by  $\tau(a) =\kappa(a\ot 1_\diamond)$ and proceed as above to conclude that (4) implies (3). 

(1)$\implies$(5) Using \eqref{brace.Hopf} and Lemma~\ref{lem.coc.prop.Hopf}, we can compute, for all $a,b,c,d\in A$,
\begin{align*}
a\circ \left[b,c,d\right] &= \sum a\circ \left(b\diamond S(c)\diamond d\right)
= \sum (a\sw 1\circ b)\diamond S\sigma(a\sw 2) \diamond \left(a\sw 3\circ \left(c^\diamond \diamond d\right)\right)\\
&= \sum (a\sw 1\circ b)\diamond S\sigma(a\sw 2) \diamond \sigma(a\sw 3) \diamond S(a\sw 4\circ c)(a\sw 5\circ d)\\
& = \sum \left[a\sw 1\circ b, a\sw 2\circ c, a\sw 3\circ d\right],
\end{align*}
by the fact that $\sigma$ is a coalgebra map and by the antipode axioms. Hence \eqref{distribute.heap.Hopf} holds. 

(5)$\implies$(1) Setting $c=1_\diamond$ in \eqref{distribute.heap.Hopf}, and $\sigma(a) = a\circ 1_\diamond$, we find that \eqref{brace.Hopf} is satisfied (for all $a,b,d\in A$).

This completes the proof of the theorem.
\end{proof}

\begin{thm}\label{thm.truss.Hopf}
Let  $(A,\Delta,\diamond,\circ,\sigma)$ be a Hopf truss.
\begin{zlist}
\item $(A,\diamond)$ is a left Hopf module algebra over $(A,\Delta,\circ)$ with any of the actions
\begin{subequations}
\label{Hopf.act}
\begin{equation}\label{la.Hopf}
\la : A\ot A\to A, \qquad a\la b = \sum S\sigma(a\sw 1)\diamond (a\sw 2\circ b),
\end{equation}
\begin{equation}\label{bla.Hopf}
\bla\:: A\ot A\to A, \qquad   a \bla b = \sum (a\sw 1\circ b) \diamond S\sigma(a\sw 2);
\end{equation}
\end{subequations}
cf.\  \eqref{lam.sig.Hopf} and \eqref{Hopf.act.mu}.
\item For all $a,b\in A$,
\begin{subequations}
 \begin{equation}\label{coc.Hopf}
 \sigma(a\circ b) = \sum \sigma(a\sw 1)\diamond \left(a\sw 2\la \sigma(b)\right),
 \end{equation}
 \begin{equation}\label{coc.Hopf.mu}
 \sigma(a\circ b) = \sum  \left(a\sw 1\bla \sigma(b)\right)\diamond \sigma(a\sw 2).
 \end{equation}
 \end{subequations}
\end{zlist}
\end{thm}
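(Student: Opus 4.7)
The plan is to mirror the proof of the set-theoretic analogue Theorem~\ref{thm.brace} in Sweedler notation, using the equivariance property \eqref{equivariant} from Lemma~\ref{lem.coc.prop.Hopf} and the antipode identities \eqref{Sb.c}--\eqref{b.Sc} of Lemma~\ref{lem.diamond.antipode}. For part (1) it suffices to verify, for each of $\la$ and $\bla$, the Hopf module algebra axioms: the multiplicativity $a\la(b\diamond c)=\sum(a\sw 1\la b)\diamond(a\sw 2\la c)$, the unitality $a\la 1_\diamond=\eps(a)1_\diamond$, and the action property $(a\circ b)\la c=a\la(b\la c)$. Part (2) is then a short direct consequence of these formulae and the equivariance of $\sigma$.

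For $\la$, multiplicativity is obtained by unfolding $a\la(b\diamond c)=\sum S\sigma(a\sw 1)\diamond(a\sw 2\circ(b\diamond c))$ via \eqref{la.Hopf}, applying \eqref{brace.Hopf} to $a\sw 2\circ(b\diamond c)$, and re-assembling the Sweedler legs through coassociativity, which matches $\sum(a\sw 1\la b)\diamond(a\sw 2\la c)$. The unit axiom follows because $a\la 1_\diamond=\sum S\sigma(a\sw 1)\diamond\sigma(a\sw 2)$ by \eqref{sigma.Hopf}, and the coalgebra-map property of $\sigma$ together with the antipode axioms collapses this to $\eps(a)1_\diamond$. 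For the action property, expand $a\la(b\la c)=\sum S\sigma(a\sw 1)\diamond(a\sw 2\circ(S\sigma(b\sw 1)\diamond(b\sw 2\circ c)))$ and apply \eqref{Sb.c} to the inner factor; the decisive step is to replace $a\sw 3\circ\sigma(b\sw 1)$ by $\sigma(a\sw 3\circ b\sw 1)$ using \eqref{equivariant}, after which an $S\sigma(\cdot)\diamond\sigma(\cdot)$ pair cancels through the antipode, leaving $\sum S\sigma(a\sw 1\circ b\sw 1)\diamond(a\sw 2\circ b\sw 2\circ c)$. Because $\circ$ is a coalgebra morphism, this is precisely $(a\circ b)\la c$. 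The treatment of $\bla$ is strictly symmetric, with \eqref{b.Sc} replacing \eqref{Sb.c}.

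For part (2), expanding \eqref{la.Hopf} gives $\sum\sigma(a\sw 1)\diamond(a\sw 2\la\sigma(b))=\sum\sigma(a\sw 1)\diamond S\sigma(a\sw 2)\diamond(a\sw 3\circ\sigma(b))$; the antipode identity applied to $\sigma$ (a coalgebra map) collapses the first two factors, leaving $a\circ\sigma(b)$, which by \eqref{equivariant} equals $\sigma(a\circ b)$ and yields \eqref{coc.Hopf}. The companion identity \eqref{coc.Hopf.mu} follows in the same way from \eqref{bla.Hopf}. The main obstacle I anticipate is the Sweedler-index bookkeeping in the action axiom for $\la$: one must split $\Delta(a)$ into sufficiently many legs, isolate the $S\sigma(\cdot)\diamond\sigma(\cdot)$ cancellation, and then regroup against $\Delta(a\circ b)=\sum(a\sw 1\circ b\sw 1)\ot(a\sw 2\circ b\sw 2)$. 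This is calculational rather than conceptual, since Lemma~\ref{lem.diamond.antipode} supplies precisely the antipode identities that let one avoid ever computing $S(x\diamond y)$ directly.
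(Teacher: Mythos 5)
Your proposal is correct and follows essentially the same route as the paper's proof: multiplicativity via \eqref{brace.Hopf}, unitality via \eqref{sigma.Hopf} and the coalgebra-map property of $\sigma$, the action axiom via \eqref{Sb.c} (resp.\ \eqref{b.Sc}) combined with the equivariance \eqref{equivariant} and the antipode cancellation, and part (2) by the same antipode collapse. Nothing essential is missing.
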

\begin{proof}
(1)  We need to prove that, for all $a,b,c\in A$,
\begin{subequations}\label{act.Hopf}
\begin{equation}\label{act.Hopf1}
a\la (b\diamond c) = \sum (a\sw 1 \la b)\diamond  (a\sw 2\la c), \qquad a\la 1_\diamond = \eps(a)1_\diamond,
\end{equation}
\begin{equation}\label{act.Hopf2}
(a\circ b)\la c = a\la\left(b\la c\right), 
\end{equation}
\end{subequations}
and similar equalities for $\bla$.

Properties \eqref{act.Hopf1} are verified by direct calculations. For all $a,b,c\in A$,
\begin{align*}
a\la(b\diamond c) &= \sum S\sigma(a\sw 1) \diamond  (a\sw 2\circ (b\diamond c)) \\
&= \sum  S\sigma(a\sw 1) \diamond (a\sw 2\circ b)\diamond S\sigma(a\sw 3) \diamond (a\sw 4 \circ c))\\
&= \sum (a\sw 1 \la b)\diamond (a\sw 2\la c),
\end{align*}
where the compatibility condition \eqref{brace.Hopf} has been used. By the fact that $\sigma$ is a coalgebra morphism and using equation \eqref{sigma.Hopf} in Lemma~\ref{lem.coc.prop.Hopf} one easily finds
$$
a\la 1_\diamond = \sum S\sigma(a\sw 1)\diamond (a\sw 2\circ 1_\diamond)  = S\sigma(a\sw 1)\diamond \sum \sigma(a\sw 2) = \eps(a)1_\diamond.
$$
Finally,
\begin{align*}
a\la (b\la c) &= \sum  S\sigma(a\sw 1) \diamond \left(a\sw 2\circ \left(S\sigma(b\sw 1) \diamond \left(b\sw 2\circ c\right)  \right) \right)\\
&= \sum  S\sigma(a\sw 1) \diamond \sigma(a\sw 2) \diamond S(a\sw 3 \circ \sigma(b\sw 1)) \diamond (a\sw 4\circ b\sw 2\circ c)\\
&= \sum  S\sigma(a\sw 1 \circ b\sw 1) \diamond (a\sw 2\circ b\sw 2\circ c) = (a\circ b)\la c,
\end{align*}
where we used equation \eqref{Sb.c} in Lemma~\ref{lem.diamond.antipode} to derive the second equality, the equivariance \eqref{equivariant} and the coalgebra morphism  properties of $\sigma$ to derive the third equality, and  the bialgebra and Hopf algebra axioms elsewhere

The statement concerning the other action $\bla$  is also proven by direct calculations, similar to that for $\la$.  Explictly, for all $a,b,c\in A$,
\begin{align*}
a\bla(b\diamond c) &= \sum (a\sw 1\circ (b\diamond c))\diamond S\sigma(a\sw 2) \\
&= \sum (a\sw 1\circ b)\diamond S\sigma(a\sw 2) \diamond (a\sw 3 \circ c))\diamond S\sigma(a\sw 4)\\
&= \sum (a\sw 1 \bla b)\diamond (a\sw 2\bla c),
\end{align*}
where the compatibility condition \eqref{brace.Hopf} has been used. By the fact that $\sigma$ is a coalgebra morphism and using  \eqref{sigma.Hopf} one easily finds
$$
a\bla 1_\diamond = \sum (a\sw 1\circ 1_\diamond)\diamond S\sigma(a\sw 2) = \sum \sigma(a\sw 1)\diamond S\sigma(a\sw 2) = \eps(a)1_\diamond.
$$
Finally,
\begin{align*}
a\bla (b\bla c) &= \sum \left(a\sw 1\circ \left(\left(b\sw 1\circ c\right) \diamond S\sigma(b\sw 2)\right) \right) \diamond S\sigma(a\sw 2)\\
&= \sum (a\sw 1\circ b\sw 1\circ c)\diamond S(a\sw 2 \circ \sigma(b\sw 2))\diamond \sigma(a\sw 3)\diamond S\sigma(a\sw 4)\\
&= \sum (a\sw 1\circ b\sw 1\circ c)\diamond S\sigma(a\sw 2 \circ b\sw 2) = (a\circ b)\bla c,
\end{align*}
where we used Lemma~\ref{lem.diamond.antipode} to derive the second equality, then \eqref{equivariant} and the fact that $\sigma$ is a coalgebra morphism  to derive the third equality, and  the bialgebra and Hopf algebra axioms elsewhere. Hence also $\bla$ is an action compatible with the bialgebra structure of $(A,\circ)$.

(2) Starting from the left hand side of \eqref{coc.Hopf} and using \eqref{lam.sig.Hopf}, \eqref{sig.endo} and \eqref{equivariant} we compute
  \begin{align*}
 \sum \sigma(a\sw 1)\diamond \left(a\sw 2\la \sigma(b)\right) &= \sum \sigma(a\sw 1)\diamond S\sigma(a\sw 2) \diamond  \left(a\sw 3\circ \sigma(b)\right)\\
 &= a\circ \sigma(b) =  \sigma(a\circ b),
  \end{align*}
  as required.
  Similarly,
   \begin{align*}
 \sum  \left(a\sw 1\bla \sigma(b)\right) \diamond  \sigma(a\sw 2)&= \sum  \left(a\sw 1\circ \sigma(b)\right) \diamond S\sigma(a\sw 2)\diamond \sigma(a\sw 3)  \\
 &= a\circ \sigma(b) =  \sigma(a\circ b).
  \end{align*}
This completes the proof of the theorem.
\end{proof}

 \begin{rem}\label{rem.coc}
 Recall e.g.\ from \cite{Swe:coh} that \eqref{coc.Hopf} means that the map $\sigma: A\to A$ given by \eqref{sigma.Hopf} is a one-cocycle on the bialgebra $(A,\circ)$ with values in a Hopf module algebra $(A,\diamond)$. This justifies the adopted terminology for $\sigma$.
 
 Note also that if $(A,\Delta,\circ)$ is a unital bialgebra (with identity $1_\circ$), then $\sigma(1_\circ) = 1_\diamond$ by Lemma~\ref{lem.coc.prop.Hopf} and, since $1_\circ$ is a group-like element, both actions $\la$ and $\bla$ are unital.
 \endeg
 \end{rem}

 \begin{prop}\label{prop.hierarchy.Hopf}
 Let $(A,\Delta, \diamond,\circ)$ be a Hopf truss with cocycle $\sigma$. For any grouplike element $e$ in $(A,\Delta, \diamond)$, define a binary operation $\diamond_e$ on $A$ by
 \begin{equation}\label{diam.e.Hopf}
 a\diamond_e b = a\diamond S(e)\diamond b.
 \end{equation}
 Then $(A,\Delta, \diamond_e)$ is a Hopf algebra with identity $e$ and the antipode
 \begin{equation}\label{antipode.n}
 S_e(a) = e\diamond S(a)\diamond e.
 \end{equation}
 Furthermore, $(A,\Delta, \diamond_e,\circ)$ is a Hopf truss with cocycle $\sigma_e(a) = a\circ e$.
 \end{prop}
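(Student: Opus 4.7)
The plan is to handle the two claims separately. First, I would establish that $(A,\Delta,\diamond_e)$ is a Hopf algebra by unwinding each axiom directly from the definition $a\diamond_e b = a\diamond S(e)\diamond b$ and the fact that $e$ is grouplike (so $S(e)\diamond e = e\diamond S(e) = 1_\diamond$ and $\Delta(S(e))=S(e)\ot S(e)$, $\eps(S(e))=1$). Associativity of $\diamond_e$ is immediate from associativity of $\diamond$. The equalities $a\diamond_e e = a = e\diamond_e a$ follow from $S(e)\diamond e = e\diamond S(e) = 1_\diamond$. That $\diamond_e$ is a coalgebra map follows since $\Delta$ is a $\diamond$-algebra morphism and $S(e)$ is itself grouplike. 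Finally, one checks that $S_e(a)=e\diamond S(a)\diamond e$ is an antipode by computing
\begin{align*}
\sum a\sw 1 \diamond_e S_e(a\sw 2) &= \sum a\sw 1\diamond S(e)\diamond e\diamond S(a\sw 2)\diamond e\\
&= \sum a\sw 1\diamond S(a\sw 2)\diamond e = \eps(a)e,
\end{align*}
and similarly $\sum S_e(a\sw 1)\diamond_e a\sw 2 = \eps(a)e$, which is the unit of $\diamond_e$.

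For the second claim, I would first note that $\sigma_e$ is a coalgebra endomorphism: because $\circ$ is a coalgebra morphism and $e$ is grouplike, $\Delta(a\circ e) = \sum (a\sw 1\circ e)\ot (a\sw 2\circ e)$ and $\eps(a\circ e)=\eps(a)\eps(e)=\eps(a)$. The slick way to verify the truss distributive law \eqref{brace.Hopf} for $(\diamond_e,\circ,\sigma_e)$ is to use the heap characterisation of Theorem~\ref{thm.equiv.Hopf}(5). Observe that $a\diamond_e b = a\diamond S(e)\diamond b = [a,e,b]$ in the heap notation $[x,y,z]=x\diamond S(y)\diamond z$. Applying \eqref{distribute.heap.Hopf} to the original Hopf truss with $c=e$ gives
\begin{equation*}
a\circ(b\diamond_e c) = a\circ [b,e,c] = \sum [a\sw 1\circ b,\; a\sw 2\circ e,\; a\sw 3\circ c] = \sum (a\sw 1\circ b)\diamond S\sigma_e(a\sw 2)\diamond (a\sw 3\circ c).
\end{equation*}
To match the right-hand side of \eqref{brace.Hopf} written in $\diamond_e$ and $S_e$, I expand
\begin{align*}
\sum (a\sw 1\circ b)\diamond_e S_e\sigma_e(a\sw 2)\diamond_e (a\sw 3\circ c) &= \sum (a\sw 1\circ b)\diamond S(e)\diamond e\diamond S\sigma_e(a\sw 2)\diamond e\diamond S(e)\diamond (a\sw 3\circ c)\\
&= \sum (a\sw 1\circ b)\diamond S\sigma_e(a\sw 2)\diamond (a\sw 3\circ c),
\end{align*}
where the cancellations $S(e)\diamond e = e\diamond S(e) = 1_\diamond$ are used twice. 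This matches the previous display, completing the proof.

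The only mildly delicate step is the reduction to the heap distributive law; once this observation is made the rest is routine bookkeeping with the grouplike identities $S(e)\diamond e = e\diamond S(e) = 1_\diamond$ and $\Delta(S(e)) = S(e)\ot S(e)$.
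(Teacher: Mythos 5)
Your proof is correct, but for the key step --- verifying the truss distributive law for $(\diamond_e,\circ,\sigma_e)$ --- you take a genuinely different route from the paper. The paper computes $a\circ(b\diamond S(e)\diamond c)$ directly from the original compatibility condition \eqref{brace.Hopf} together with equation \eqref{Sb.c} of Lemma~\ref{lem.diamond.antipode}, and then massages the result into the $\diamond_e$, $S_e$, $\sigma_e$ form by inserting $S(e)\diamond e$ and $e\diamond S(e)$. You instead invoke the heap characterisation \eqref{distribute.heap.Hopf} from Theorem~\ref{thm.equiv.Hopf}(5) with middle slot $e$, which packages the same antipode manipulations into a single cited identity; this mirrors exactly how the paper handles the set-theoretic analogue (Corollary~\ref{cor.family} via Lemma~\ref{lem.heap}), so your argument is arguably the more uniform one, at the cost of leaning on the equivalence (1)$\Leftrightarrow$(5) of Theorem~\ref{thm.equiv.Hopf} rather than on Lemma~\ref{lem.diamond.antipode} alone. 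Your verification of the Hopf algebra axioms for $\diamond_e$ and of the coalgebra-map property of $\sigma_e$ fills in details the paper dismisses with ``one easily checks,'' and both are sound. The only cosmetic blemish is the clash of the letter $c$ as both a bound slot of the heap identity and the third argument of the distributive law; it does not affect correctness.
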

\begin{proof}
Since $e$ is a grouplike element it is a unit in $(A,\diamond)$ and $S(e) = e^\diamond$. Thus the formula \eqref{diam.e.Hopf} is in fact identical with that in Corollary~\ref{cor.family}. One easily checks that $(A,\Delta, \diamond_e)$  is a Hopf algebra (isomorphic to $(A,\Delta, \diamond)$) with the antipode and identity as described. It remains to check the compatibility condition \eqref{brace.Hopf}. To this end let us take any $a,b,c\in A$ and, using equation \eqref{Sb.c} in Lemma~\ref{lem.diamond.antipode}, properties of the Hopf truss cocycle listed in Lemma~\ref{lem.coc.prop.Hopf}, the fact that $\sigma$ is a coalgebra map and properties of the antipode, compute
\begin{align*}
a\circ(b\diamond_e c) &= a\circ (b\diamond S(e)\diamond c) \\
&= \sum (a\sw 1 \circ b)\diamond S\sigma(a\sw 2)\diamond \left(a\sw 3\circ \left(S(e)\circ c\right)\right)\\
&= \sum (a\sw 1 \circ b)\diamond S\sigma(a\sw 2)\diamond \sigma(a\sw 3) \diamond S(a\sw 4 \circ e) 
\diamond  (a\sw 5\circ c)\\
&= \sum (a\sw 1 \circ b)\diamond S(e)\diamond e \diamond S\sigma_e(a\sw 2 )
\diamond e\diamond S(e)\diamond (a\sw 3\circ c)\\
& = \sum (a\sw 1 \circ b)\diamond_e S_e(\sigma_e(a\sw 2 )) \diamond_e  (a\sw 3\circ c).
\end{align*}
Therefore, $(A,\Delta, \diamond_e,\circ)$ is a Hopf truss, as stated. 
\end{proof}

In particular, since $\sigma$ is a coalgebra map and $1_\diamond$ is a grouplike element, we can take $e = \sigma^n(1_\diamond)$  in Proposition~\ref{prop.hierarchy.Hopf}, in which case we obtain a Hopf truss with cocycle $\sigma_e = \sigma^{n+1}$. Furthermore, if $(A,\Delta,\circ)$ is a unital bialgebra, then setting $e = \sigma(1_\circ)$ we obtain $\sigma_e = \id$, and hence operations in $(A,\Delta, \diamond_e,\circ)$ satisfy the Hopf brace-type distributive law

\begin{prop}\label{prop.Hopf.morph}
Let $(A,\Delta_A, \diamond, \circ)$ and   $(B,\Delta_B, \diamond, \circ)$ be Hopf trusses with cocycles $\sigma_A$ and $\sigma_B$, respectively. Let $\la$ and $\bla$ be left actions defined in Theorem~\ref{thm.truss.Hopf}. A linear map $f:A\to B$ that is a homomorphism of respective Hopf algebras and bialgebras has the following properties: 
\begin{rlist}
\item For all $a\in A$
\begin{equation}\label{diag.sig}
f(\sigma_A(a)) = \sigma_B(f(a)).
\end{equation}
\item For all $a,a'\in A$,
\begin{equation}\label{diag.lam}
f(a\la a') = f(a)\la f(a').
\end{equation}
\item
For all $a,a'\in A$,
\begin{equation}\label{diag.mu}
f(a\bla a') = f(a)\bla f(a').
\end{equation}
\end{rlist}

Furthermore if the antipode $S_B$ of $(B,\Delta_B, \diamond)$ is injective, then all these properties are mutually equivalent.
\end{prop}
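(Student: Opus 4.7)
The plan is to first establish (i)--(iii) under the standing assumption that $f$ respects both $\diamond$ and $\circ$, and then to recover (i) from either (ii) or (iii) when $S_B$ is injective, giving full mutual equivalence.

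Statement (i) is immediate from Lemma~\ref{lem.coc.prop.Hopf}: since $\sigma_A(a) = a\circ_A 1_{\diamond_A}$ and $\sigma_B(b) = b\circ_B 1_{\diamond_B}$, the fact that $f$ preserves $\circ$ (bialgebra map) and sends $1_{\diamond_A}$ to $1_{\diamond_B}$ (Hopf algebra map) yields $f(\sigma_A(a)) = f(a)\circ_B 1_{\diamond_B} = \sigma_B(f(a))$. For (ii) I would apply $f$ to the formula \eqref{la.Hopf} defining $\la$, invoking in turn that $f$ intertwines $\diamond$, the antipodes, $\circ$, the comultiplications, and (by (i)) the cocycles; the resulting expression matches $f(a)\la f(a')$. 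Statement (iii) follows by the same kind of calculation from \eqref{bla.Hopf}.

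For the converse direction, set $\varphi := f\circ\sigma_A$ and $\psi := \sigma_B\circ f$; these are two coalgebra maps $A\to B$ whose equality is exactly (i). The key tool is the convolution-inverse structure of $\mathrm{Hom}_\FF(A,B)$: since $(B,\diamond_B)$ is a Hopf algebra, every coalgebra map $\chi:A\to B$ is convolution-invertible with inverse $S_B\circ\chi$, because $\sum \chi(a\sw 1)\diamond_B S_B\chi(a\sw 2) = \eps(\chi(a))1_{\diamond_B} = \eps(a)1_{\diamond_B}$, and symmetrically. Assuming (ii), expanding $f(a\la a') = f(a)\la f(a')$ via \eqref{la.Hopf} using only that $f$ preserves $\diamond$, $S$, $\circ$ and $\Delta$ (crucially not yet (i)) yields, for all $a,a'\in A$,
\begin{equation*}
\sum S_B\varphi(a\sw 1)\diamond_B (f(a\sw 2)\circ_B f(a')) = \sum S_B\psi(a\sw 1)\diamond_B (f(a\sw 2)\circ_B f(a')).
\end{equation*}
Specialising $a' = 1_{\diamond_A}$ collapses $f(a\sw 2)\circ_B 1_{\diamond_B}$ to $\psi(a\sw 2)$, turning the identity into $\sum S_B\varphi(a\sw 1)\diamond_B \psi(a\sw 2) = \eps(a)1_{\diamond_B}$. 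Hence $S_B\varphi$ is a left convolution inverse of $\psi$; uniqueness forces $S_B\varphi = S_B\psi$, and injectivity of $S_B$ then yields $\varphi = \psi$, which is (i). The implication (iii) $\Rightarrow$ (i) is symmetric: the analogous expansion of $f(a\bla a') = f(a)\bla f(a')$ at $a' = 1_{\diamond_A}$ produces the right-sided convolution identity $\sum \psi(a\sw 1)\diamond_B S_B\varphi(a\sw 2) = \eps(a)1_{\diamond_B}$, again forcing $\varphi = \psi$.

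The step I expect to be the main obstacle is the extraction of a pure convolution identity from the expanded forms of (ii) or (iii), whose two sides share the non-trivial common factor $f(a\sw 2)\circ_B f(a')$. Evaluating at $a' = 1_{\diamond_A}$ is the trick that collapses this factor into $\psi(a\sw 2)$, at which point the Hopf-algebra structure of $B$ takes over via the uniqueness of convolution inverses. Verifying that injectivity of $S_B$ is genuinely required (since the natural cancellation only identifies $S_B\varphi$ with $S_B\psi$) is the one remaining subtlety.
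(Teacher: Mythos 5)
Your proposal is correct, and the forward direction coincides with the paper's: (i) is read off from $\sigma(a)=a\circ 1_\diamond$ and the preservation properties of $f$, and (ii), (iii) follow by pushing $f$ through the defining formulas for $\la$ and $\bla$ using (i). The converse is where you genuinely diverge in mechanism, although both routes reduce to $S_B(f(\sigma_A(a)))=S_B(\sigma_B(f(a)))$ before invoking injectivity of $S_B$. The paper keeps $a'$ general: it tensors the expanded form of \eqref{diag.lam} with an extra Sweedler leg $f(a\sw 3\circ a'\sw 2)$ and applies $\diamond(\id\ot S_B)$, cancelling the common factor $f(a\sw 2\circ a')$ by the antipode axiom (this uses that $f$ and $\circ$ are coalgebra maps). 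You instead specialise at $a'=1_\diamond$, so that the common factor collapses to the coalgebra map $\psi=\sigma_B\circ f$, and then conclude $S_B\varphi=S_B\psi$ from uniqueness of convolution inverses in $\mathrm{Hom}_\FF(A,B)$ (a left or right convolution inverse of $\psi$ must equal its two-sided inverse $S_B\psi$). The two arguments are the same Hopf-algebraic cancellation in different packaging: yours is more economical, needing only the value of the identity at $a'=1_\diamond$ and a standard convolution-algebra fact, while the paper's works with the full identity and stays at the level of explicit Sweedler computations. Both correctly isolate the single point at which injectivity of $S_B$ is required.
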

\begin{proof}
Since $\sigma$ is fully determined by $\circ$ and $1_\diamond$, precisely in the same way as in the skew truss case, equation \eqref{diag.sig} follows by the preservation properties of $f$ by the same arguments as in the proof of Proposition~\ref{prop.morph}.

Using \eqref{diag.sig} and the fact that $f$ is a Hopf algebra map we can compute, for all $a,a'\in A$,
\begin{align*}
f(a)\la f(a') &= \sum S_B\left(\sigma_B\left(f(a\sw 1)\right)\right)\diamond f(a\sw 2\circ a')\\
& = \sum S_B\left(f\left(\sigma_A\left(a\sw 1\right)\right)\right)\diamond f(a\sw 2\circ a')\\
& = \sum f\left(S_A\left(\sigma_A\left(a\sw 1\right)\right)\diamond (a\sw 2\circ a')\right) = f\left(a\la a'\right) .
\end{align*}
In particular, a Hopf algebra and a bialgebra morphism has property (ii).

Coversely, using the above arguments one easily sees that \eqref{diag.lam}  implies that
\begin{align*}
\sum S_B\left(\sigma_B\left(f(a\sw 1)\right)\right) & \diamond f(a\sw 2\circ a'\sw 1)  \ot f(a\sw 3\circ a'\sw 2)\\
& = \sum S_B\left(f\left(\sigma_A\left(a\sw 1\right)\right)\right)\diamond f(a\sw 2\circ a'\sw 1) \ot f(a\sw 3\circ a'\sw 2).
\end{align*}
Applying $\diamond(\id\ot S_B)$ to both sides of this equality and using the fact that $f$ is a coalgebra homomorphism one obtains 
$$
S_B\left(f(\sigma_A(a))\right) = S_B\left(\sigma_B(f(a))\right),
$$
and  the injectivity of $S_B$ yields
\eqref{diag.sig}. Hence (ii) implies (i).

The implication (i) $\implies$ (iii) and its converse (for the injective $S_B$) are  shown in a similar way.
\end{proof}
 
A Hopf algebra and a bialgebra map $f$ (thus satisfying equivalent conditions of Proposition~\ref{prop.Hopf.morph}) is a {\em morphism of Hopf trusses}.
As was the case for skew trusses also Hopf truss structures can be ported by isomorphisms.
 \begin{lem}\label{lem.ind.Hopf}
 Let $(A,\Delta,\diamond,\circ)$ be a Hopf  truss with cocycle $\sigma$.
 \begin{zlist}
 \item  Let $(B,\ast, \Delta_B)$ be a Hopf algebra. Given an isomorphism of Hopf algebras $f:(B,\ast,\Delta_B)\to (A,\diamond,\Delta)$, define a binary operation on $B$ by porting $\circ$ to $B$ through $f$, i.e.\
 $$
 a\circ_f b = f^{-1}\left(f(a)\circ f(b)\right), \qquad \mbox{for all $a,b\in B$}.
 $$
 Then $(B,\Delta_B,\ast,\circ_f)$ is a Hopf truss isomorphic to $(A,\Delta,\diamond,\circ)$, with  cocycle $\sigma_B := f^{-1}\sigma f$.
 \item   Let $(B,\Delta_B, \bullet)$ be a bialgebra and let $g:(B,\Delta_B, \bullet)\to (A,\Delta,\circ)$ be  an isomorphim of bialgebras. Define a binary operation on $B$ by porting $\diamond$ to $B$ through $g$, i.e.\
 $$
 a\diamond_g b = g^{-1}\left(g(a)\diamond g(b)\right), \qquad \mbox{for all $a,b\in B$}.
 $$
 Then $(B,\Delta_B, \diamond_g,\bullet)$ is a Hopf truss isomorphic to $(A,\Delta, \diamond,\circ)$, with  cocycle $\sigma_B := g^{-1}\sigma g$.
 \end{zlist}
 \end{lem}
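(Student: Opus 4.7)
The plan is to establish both statements by a transport-of-structure argument, mirroring the proof of Lemma~\ref{lem.ind} but respecting the additional coalgebraic data. I will spell out part (1) and indicate the modifications needed for part (2).

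For part (1), the first task is to check that $(B,\Delta_B,\circ_f)$ is a non-unital bialgebra. The map $\circ_f = f^{-1}\circ \circ \circ (f\otimes f)$ is a composite of coalgebra morphisms (since $f$ is a Hopf algebra isomorphism, both $f$ and $f^{-1}$ are coalgebra maps, and $\circ$ is a coalgebra map by assumption on $(A,\Delta,\circ)$), hence a coalgebra morphism. Associativity is a one-line computation: $a\circ_f(b\circ_f c) = f^{-1}(f(a)\circ f(b)\circ f(c)) = (a\circ_f b)\circ_f c$. Next, $\sigma_B = f^{-1}\sigma f$ is a composite of coalgebra maps and so is a coalgebra endomorphism of $(B,\Delta_B)$. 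The central step is the truss distributive law \eqref{brace.Hopf} for $(B,\ast,\circ_f,\sigma_B)$. Given $a,b,c\in B$, apply $f$ to $a\circ_f(b\ast c)$: since $f$ preserves $\ast\mapsto\diamond$, we get $f(a)\circ(f(b)\diamond f(c))$. Invoking \eqref{brace.Hopf} in $A$ and then using, successively, that $f$ is a coalgebra map (to replace $f(a)\sw i$ by $f(a\sw i)$), a $\diamond$-morphism, an antipode-preserving map, and satisfies $\sigma\circ f = f\circ\sigma_B$ by definition, transports the right-hand side back to $f\bigl(\sum (a\sw 1\circ_f b)\ast S_B\sigma_B(a\sw 2)\ast(a\sw 3\circ_f c)\bigr)$. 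Applying $f^{-1}$ yields \eqref{brace.Hopf} for $(B,\ast,\circ_f,\sigma_B)$. Finally, $f$ is by construction a Hopf algebra and bialgebra isomorphism and satisfies $f\circ\sigma_B = \sigma\circ f$, so in view of Proposition~\ref{prop.Hopf.morph} it is an isomorphism of Hopf trusses.

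For part (2), the preliminary step is to show $(B,\Delta_B,\diamond_g)$ is a Hopf algebra. Exactly as above, $\diamond_g = g^{-1}\circ\diamond\circ(g\otimes g)$ is a coalgebra morphism, associative with unit $1_{\diamond_g}=g^{-1}(1_\diamond)$; the antipode is $S_B := g^{-1}Sg$, and the antipode identity follows from the calculation
\begin{align*}
\sum a\sw 1\diamond_g S_B(a\sw 2) &= g^{-1}\!\left(\sum g(a\sw 1)\diamond S(g(a\sw 2))\right)\\
&= g^{-1}\!\left(\sum g(a)\sw 1\diamond S(g(a)\sw 2)\right) = g^{-1}(\eps(g(a))1_\diamond) = \eps(a)1_{\diamond_g},
\end{align*}
using that $g$ is a coalgebra map. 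The check of \eqref{brace.Hopf} is then identical to the one in part (1), with the roles of the ported and native operations swapped: apply $g$ to $a\bullet(b\diamond_g c)$, use that $g$ is a bialgebra map to turn this into $g(a)\circ(g(b)\diamond g(c))$, apply the truss law in $A$, and translate back using that $g$ preserves the transported Hopf structure and $\sigma_B = g^{-1}\sigma g$.

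The calculations are essentially mechanical, so there is no genuine obstacle in either part; the only place requiring care is the verification of \eqref{brace.Hopf}, where one must systematically move Sweedler indices through $f$ (or $g$) and keep track of which maps preserve which operation. The step most prone to slipping is the use of $f$ as a coalgebra homomorphism to identify $f(a)\sw 1\otimes f(a)\sw 2\otimes f(a)\sw 3$ with $f(a\sw 1)\otimes f(a\sw 2)\otimes f(a\sw 3)$, which is exactly the point at which the coalgebra hypothesis on $f$ (respectively $g$) is indispensable and cannot be replaced by the weaker hypothesis used in Lemma~\ref{lem.ind}.
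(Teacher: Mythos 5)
Your proposal is correct and follows essentially the same route as the paper: a direct transport-of-structure computation linearising Lemma~\ref{lem.ind}, with the ported operation shown to be a coalgebra map, the antipode $g^{-1}Sg$ (resp.\ the cocycle $f^{-1}\sigma f$) verified to work, and the truss distributive law \eqref{brace.Hopf} checked by conjugating through the isomorphism while moving Sweedler indices across the coalgebra map. The only cosmetic difference is that the paper displays the details for part (2) (having already displayed part (1) of Lemma~\ref{lem.ind}) whereas you spell out part (1) and sketch part (2); the content is identical.
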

\begin{proof}
This lemma is proven by direct calculations, obtained as linearisation of calculations in the proof of Lemma~\ref{lem.ind}. Since we have displayed calculation proving statement (1) of Lemma~\ref{lem.ind}, here we display  calculations proving statement (2).

Since $g$ is a bialgebra homomorphism (i.e.\ it respects both algebra and coalgebra structures) one quickly deduces that $B$ with product $\diamond_g$, identity $1_{\diamond_g} = g^{-1}(1_\diamond)$ and the original coproduct $\Delta_B$ is a unital bialgebra. If $S$ denotes the antipode of the Hopf algebra $(A,\diamond, \Delta)$, then $S_B = g^{-1}Sg$ is the antipode of $(B, \diamond_g,\Delta_B)$. Indeed, for all $b\in B$,
\begin{align*}
\sum b\sw 1\diamond_g S_B(b\sw 2) &= \sum g^{-1}\left(g(b\sw 1)\diamond S(g(b\sw 2))\right)\\
& = \sum g^{-1}\left(g(b)\sw 1\diamond S(g(b)\sw 2)\right)
= g^{-1}(\eps(g(b))1_\diamond) = \eps_B(b)1_{\diamond_g},
\end{align*}
and similarly for the second antipode condition. Finally, the Hopf truss compatibility condition \eqref{brace.Hopf} can be checked as follows:
\begin{align*}
a\bullet (b\diamond_g c) &= g^{-1}\left(g\left(a\right)\right)\bullet g^{-1}\left(g\left(b\right)\diamond g\left(c\right)\right) = g^{-1}\left(g\left(a\right)\bullet \left(g\left(b\right)\diamond g\left(c\right)\right)\right)\\
&= \sum  g^{-1}\left(\left(g\left(a\right)\sw 1\bullet g\left(b\right)\right)\diamond S(\sigma(g\left(a\right)\sw 2)) \diamond \left(g\left(a\right)\sw 3\bullet g\left(c\right)\right)\right)\\
&= \sum  g^{-1}\left(\left(g\left(a\sw 1\right)\bullet g\left(b\right)\right)\diamond S(\sigma(g\left(a\sw 2\right))) \diamond \left(g\left(a\sw 3\right)\bullet g\left(c\right)\right)\right)\\
&=\sum \left(a\sw 1\bullet b\right)\diamond_g g^{-1} \left(S(\sigma(g\left(a\sw 2\right))) \right)\diamond_g\left(a\sw 3\bullet c\right)\\
&=\sum \left(a\sw 1\bullet b\right)\diamond_g S_B\left(\sigma_B\left(a\sw 2\right)\right) \diamond_g\left(a\sw 3\bullet c\right),
\end{align*}
as required.
\end{proof}

We finish with a few comments as to when a Hopf truss becomes a Hopf brace. Since the formula \eqref{sigma.Hopf} for the cocycle $\sigma$ of a  Hopf truss is the same as \eqref{sigma},  $\sigma$ is bijective if $1_\diamond$ is an invertible element of $(A,\circ)$. 

\begin{lem}\label{lem.Hopf.brace}
Let  $(A,\Delta, \diamond,\circ)$ be a  Hopf truss with cocycle $\sigma$.  If
$(A,\Delta, \circ)$ is a Hopf algebra, then $A$ is a Hopf brace with respect to $\diamond$ and the operation $\bullet$ defined by
\begin{equation}\label{bullet.Hopf}
a\bullet b = \sigma^{-1}(a)\circ b = a\circ 1_\diamond^\circ\circ b, \qquad \mbox{for all $a,b\in A$}.
\end{equation}
\end{lem}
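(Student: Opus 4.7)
The plan is to follow the same pattern as in Lemma~\ref{lem.brace}, upgrading each argument to the Hopf-algebraic setting and relying on Theorem~\ref{thm.truss.Hopf} to supply the cocycle structure. First I would observe that since $\Delta$ is shared by both bialgebra structures, $1_\diamond$ is a grouplike element of $(A,\Delta,\circ)$ as well; being a grouplike in the Hopf algebra $(A,\Delta,\circ)$, it is invertible, with $1_\diamond^\circ = S_\circ(1_\diamond)$ grouplike. By \eqref{sigma.Hopf}, $\sigma(a) = a\circ 1_\diamond$, so $\sigma$ is bijective with coalgebra-endomorphism inverse $\sigma^{-1}(a) = a\circ 1_\diamond^\circ$. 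This is the Hopf analogue of the equivalence (i)$\Leftrightarrow$(ii) in Lemma~\ref{lem.sigma.inverse}.

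Next I would show that $(A,\Delta,\bullet)$ is a Hopf algebra. Associativity is immediate once one writes $a\bullet b = a\circ 1_\diamond^\circ\circ b$ and uses associativity of $\circ$. The element $1_\diamond$ is a two-sided unit since $1_\diamond\circ 1_\diamond^\circ = 1_\diamond^\circ\circ 1_\diamond = 1_\circ$, which acts trivially under $\circ$. The multiplication $\bullet$ is a coalgebra morphism because it factors as $\circ\circ(\id\ot L_{1_\diamond^\circ}\circ\id)$ where $L_{1_\diamond^\circ}$ is left multiplication by the grouplike $1_\diamond^\circ$; both $\circ$ and multiplication by a grouplike preserve $\Delta$ and $\eps$. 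Finally, the map
\[
S_\bullet : A\to A,\qquad S_\bullet(a) = 1_\diamond\circ S_\circ(a)\circ 1_\diamond
\]
is an antipode for $\bullet$; indeed, $\sum a\sw 1\bullet S_\bullet(a\sw 2) = \sum a\sw 1\circ S_\circ(a\sw 2)\circ 1_\diamond = \eps(a) 1_\circ\circ 1_\diamond = \eps(a) 1_\diamond$, and similarly on the other side.

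With the Hopf algebra structure of $(A,\Delta,\bullet)$ in hand, the brace distributive law
\[
a\bullet(b\diamond c) = \sum (a\sw 1\bullet b)\diamond S(a\sw 2)\diamond (a\sw 3\bullet c)
\]
follows by a direct computation: applying \eqref{brace.Hopf} with $a$ replaced by $\sigma^{-1}(a)$ gives
\[
\sigma^{-1}(a)\circ(b\diamond c) = \sum \bigl(\sigma^{-1}(a)\sw 1\circ b\bigr)\diamond S\sigma\bigl(\sigma^{-1}(a)\sw 2\bigr)\diamond \bigl(\sigma^{-1}(a)\sw 3\circ c\bigr),
\]
and then using that $\sigma^{-1}$ is a coalgebra endomorphism (so one may distribute it inside the Sweedler notation) together with $\sigma\circ\sigma^{-1} = \id$ collapses the middle factor to $S(a\sw 2)$. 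The outer factors become $a\sw 1\bullet b$ and $a\sw 3\bullet c$ by definition of $\bullet$.

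The main obstacle I expect is bookkeeping: one has to juggle two antipodes ($S$ of $\diamond$ and $S_\circ$ of $\circ$), two units ($1_\diamond$ and $1_\circ$), and the two multiplications while keeping the coassociativity indices straight. Conceptually nothing is hard—once $\sigma$ is identified as a bijective coalgebra $1$-cocycle in the sense of Theorem~\ref{thm.truss.Hopf}(2) and Remark~\ref{rem.coc}, the conclusion can alternatively be read off from the Hopf-algebraic version of the correspondence between bijective $1$-cocycles and braces of \cite[Proposition~1.11]{GuaVen:ske}, as linearised in \cite{AngGal:Hop}, so the above computation is essentially just a verification that $a\bullet b = \sigma\bigl(\sigma^{-1}(a)\circ\sigma^{-1}(b)\bigr) = \sigma^{-1}(a)\circ b$ via the equivariance~\eqref{equivariant}.
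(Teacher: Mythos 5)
Your argument is correct, and it reaches the conclusion by a more self-contained route than the paper. The paper's own proof is very short: it observes (as you do) that $1_\diamond$ is grouplike, hence invertible in the Hopf algebra $(A,\Delta,\circ)$, so $\sigma$ is bijective; it then invokes Theorem~\ref{thm.truss.Hopf} to identify $\sigma$ as a bijective one-cocycle and cites \cite[Theorem~1.12]{AngGal:Hop} (the Hopf-algebraic bijective-$1$-cocycle $\leftrightarrow$ brace correspondence) to conclude, computing the formula $a\bullet b=\sigma\left(\sigma^{-1}(a)\circ\sigma^{-1}(b)\right)=\sigma^{-1}(a)\circ b$ exactly as you do in your closing remark. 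You instead verify the Hopf brace axioms directly: that $(A,\Delta,\bullet)$ is a Hopf algebra with unit $1_\diamond$ and antipode $S_\bullet(a)=1_\diamond\circ S_\circ(a)\circ 1_\diamond$, and that the brace distributive law drops out of \eqref{brace.Hopf} upon substituting $\sigma^{-1}(a)$ and using that $\sigma^{-1}$ is a coalgebra morphism. All of these steps check out (in particular the inverse of a bijective coalgebra endomorphism is indeed a coalgebra morphism, and your antipode computation is right). What the paper's route buys is brevity and a conceptual placement of the result within the known cocycle--brace dictionary; what your route buys is independence from the external theorem and explicit formulas for the unit and antipode of $(A,\Delta,\bullet)$, which the citation leaves implicit. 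Since you also flag the citation route at the end, the two proofs are fully compatible.
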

\begin{proof}
Since $1_\diamond$ is a group-like element it is invertible in the Hopf algebra  $(A,\Delta,\circ)$, and hence the map  $\sigma$ is bijective by Lemma~\ref{lem.sigma.inverse}. Hence, by Theorem~\ref{thm.truss.Hopf}, $\sigma$ is a bijective one-cocycle on the Hopf algebra  $(A,\circ)$ with values in the Hopf algebra $(A,\diamond)$. By \cite[Theorem~1.12]{AngGal:Hop}, $A$ is a Hopf brace with operations $\diamond$ and the induced operation
$$
a\bullet b = \sigma\left(\sigma^{-1}(a)\circ\sigma^{-1}(b)\right) = \sigma^{-1}(a)\circ b = a\circ 1_\diamond^\circ\circ b,
$$
by the same arguments as in the proof of Lemma~\ref{lem.brace}.
\end{proof}

\section{Concluding remarks}
The aim of this paper was to obtain some insight into the algebraic nature of the brace distributive law. As a starting point we introduced a law interpolating that of ring and brace distributive laws and an algebraic system bound by this law, which we termed truss. Braces arose from the study of the Yang-Baxter equation, but they proved to be an extremely worthwhile object of study on their own right, since their applications reach far beyond the motivating problem, deep into group theory. Trusses arise from the study of braces; the study of trusses alone might be a worthwhile task too. Should this be the case, there would be many questions that could be posed and answers to which should be sought. Here we mention but a few. Is the notion of a pith a good replacement for that of a kernel? What can play the role of an ideal of a truss? What is its socle? What is a module over a truss? How to construct and what can be said about the quotient of a truss by its ideal? What can be said about the category of trusses: what properties does it have, what universal constructions does it admit?  Should one stick to morphisms as defined in the present paper, or should one define them using the heap structure as in Remark~\ref{rem.heap.morph}? What is the structure of category defined by this means? 
Can the heap formulation of solutions to the set-theoretic Yang-Baxter equation in Remark~\ref{rem.YB} help in constructing new classes of solutions? What can be said about trusses for general heaps 
as proposed in Remark~\ref{rem.heap}, without reference to the inherent group structure? 

As with any possibly novel, but closely related to existing, algebraic structure the above list of questions can be easily extended. The author of these notes hopes that at least some of the questions posed will be considered worth addressing.

\subsection*{Acknowledgments}
I would like to thank Eric Jespers for a discussion  in May 2017, which encouraged me to present my thoughts and observations in a systematic way. I would also like to express my gratitude to the participants (and the organisers) of the Workshop on Non-commutative and Non-associative Algebraic Structures in Physics and Geometry (Belfast, August 2017), where some of the results of this paper were presented, for their interest and very helpful comments. I would also like to thank Michael Kinyon, for sharing with me his observations on the very natural interpretation of the truss distributive law in terms of heaps. Finally I would like to thank Charlotte Verwimp, whose comment on the pith of a skew ring morphism led me to rethinking and redesigning the notion of a pith.

The research presented in this paper is partially supported by the Polish National Science Centre grant 2016/21/B/ST1/02438.

\end{document}